\definecolor{LinkColor}{rgb}{0,0,1}
\definecolor{LinkColor2}{rgb}{0,0.5,0}
\definecolor{lbcolor}{rgb}{0.85,0.85,0.85}
\definecolor{FrameColor}{rgb}{0.85,0.85,0.85}
\definecolor{rosso}{rgb}{0.8,0,0}
\definecolor{darkgreen}{rgb}{0,0.5,0}
\numberwithin{equation}{section}
\newtheorem{theorem}{Theorem}[section]
\newtheorem{lemma}[theorem]{Lemma}
\newtheorem{definition}[theorem]{Definition}
\theoremstyle{definition}
\newtheorem{remark}[theorem]{Remark}
\renewenvironment{proof}[1][\proofname]{%
	\par\pushQED{\qed}\normalfont%
	\topsep6\p@\@plus6\p@\relax
	\trivlist\item[\hskip\labelsep\bfseries#1\@addpunct{.}]%
	\ignorespaces
}{%
	\popQED\endtrivlist\@endpefalse
}
\renewcommand\paragraph{\@startsection{paragraph}{4}{\z@}%
	{1ex \@plus1ex \@minus.2ex}%
	{-1em}%
	{\normalfont\normalsize\bfseries}}
\renewcommand\subparagraph{\@startsection{paragraph}{4}{\z@}%
	{1ex \@plus1ex \@minus.2ex}%
	{-1em}%
	{\normalfont\normalsize\itshape}}
\newcommand{\norm}[1]{\ensuremath\left\| #1 \right\|}
\newcommand{\bignorm}[1]{\ensuremath\big\| #1 \big\|}
\newcommand{\abs}[1]{\ensuremath\left|#1 \right|}
\newcommand{\inn}[2]{\ensuremath\left( #1 \hspace{1pt}{,}\hspace{1pt} #2 \right)}
\newcommand{\biginn}[2]{\ensuremath\big( #1 \hspace{1pt}{,}\hspace{1pt} #2 \big)}
\newcommand{\ang}[2]{\ensuremath\left< #1 \hspace{1pt}{,}\hspace{1pt} #2 \right>}
\def\Vp{{(\HH^1)'}}
\newcommand{\R}{\mathbb R}
\newcommand{\n}{\mathbf{n}}
\renewcommand{\v}{{\boldsymbol v}}
\newcommand{\w}{\mathbf {w}}
\newcommand{\z}{\mathbf {z}}
\newcommand{\p}{\mathbf{p}}
\newcommand{\s}{\mathbf{s}}
\newcommand{\m}{\mathbf{m}}
\newcommand{\h}{\mathbbm{h}}
\newcommand{\pp}{\mathbbm{p}}
\newcommand{\f}{\mathbbm{f}}
\newcommand{\g}{\mathbbm{g}}
\newcommand{\ttheta}{{\boldsymbol\vartheta}}
\newcommand{\ttau}{{\boldsymbol\tau}}
\newcommand{\D}{\mathbf{D}}
\newcommand{\I}{\mathbf{I}}
\newcommand{\N}{\mathbf{N}}
\newcommand{\G}{\mathbf{G}}
\renewcommand{\S}{\mathbf{S}}
\newcommand{\T}{\mathbf{T}}
\newcommand{\HH}{\boldsymbol{H}}
\newcommand{\LL}{\boldsymbol{L}}
\newcommand{\WW}{\boldsymbol{W}}
\newcommand{\0}{\boldsymbol{0}}
\newcommand{\CWK}{\mathcal{W}_k}
\newcommand{\CZK}{\mathcal{Z}_k}
\newcommand{\bphi}{\boldsymbol{\varphi}}
\newcommand{\bmu}{\boldsymbol{\mu}}
\newcommand{\bsig}{\boldsymbol{\sigma}}
\newcommand{\bu}{\boldsymbol{u}}
\newcommand{\be}{\boldsymbol{\eta}}
\newcommand{\bz}{\boldsymbol{\zeta}}
\newcommand{\bt}{\boldsymbol{\theta}}
\newcommand{\bx}{\boldsymbol{\xi}}
\newcommand{\Ns}{\mathbf{N}_{\bsig}}
\newcommand{\Np}{\mathbf{N}_{\bphi}}
\newcommand{\Nss}{\mathbf{N}_{\bsig\bsig}}
\newcommand{\Nsp}{\mathbf{N}_{\bsig\bphi}}
\newcommand{\Npp}{\mathbf{N}_{\bphi\bphi}}
\newcommand{\Gs}{\mathbf{G}_{\bsig}}
\newcommand{\Gp}{\mathbf{G}_{\bphi}}
\newcommand{\Gss}{\mathbf{G}_{\bsig\bsig}}
\newcommand{\Gsp}{\mathbf{G}_{\bsig\bphi}}
\newcommand{\Gpp}{\mathbf{G}_{\bphi\bphi}}
\newcommand{\Ss}{\mathbf{S}_{\bsig}}
\newcommand{\Sp}{\mathbf{S}_{\bphi}}
\newcommand{\SG}{\mathbf{S}_{\Gamma}}
\newcommand{\Sv}{S_{\v}}
\newcommand{\Th}{\boldsymbol{\theta}}
\newcommand{\Lam}{\boldsymbol{\Lambda}}
\newcommand{\CC}{\mathbb{C}}
\newcommand{\DD}{\mathbb{D}}
\newcommand{\intO}{\int_\Omega}
\newcommand{\intQ}{\int_{Q}}
\newcommand{\intG}{\int_\Gamma}
\newcommand{\intS}{\int_\Sigma}
\newcommand{\eps}{\varepsilon}
\newcommand{\dx}{\;\mathrm dx}
\newcommand{\dt}{\;\mathrm dt}
\newcommand{\ds}{\;\mathrm ds}
\newcommand{\dxt}{\;\mathrm d(x,t)}
\newcommand{\dS}{\;\mathrm dS}
\newcommand{\ddt}{\frac{\mathrm d}{\mathrm dt}}
\newcommand{\del}{\partial}
\newcommand{\delt}{\partial_{t}}
\newcommand{\deln}{\partial_\n}
\newcommand{\Grad}{\nabla}
\newcommand{\Lap}{\Delta}
\newcommand{\Div}{\textnormal{div}}
\newcommand{\emb}{\hookrightarrow}
\newcommand{\suchthat}{\;\ifnum\currentgrouptype=16 \middle\fi|\;}
\def\Lip{Lip\-schitz}
\def\Hol{H\"older}
\def\rhs{right-hand side}
\def\<#1>{\mathopen\langle #1\mathclose\rangle}
\def\genspazio #1#2#3#4#5{#1^{#2}(#5,#4;#3)}
\def\spazio #1#2#3{\genspazio {#1}{#2}{#3}T0}
\def\spaziok #1#2#3{\genspazio {#1}{#2}{#3}{T_k}0}
\def\L {\spazio L}
\def\H {\spazio H}
\def\W {\spazio W}
\def\Lk {\spaziok L}
\def\Hk {\spaziok H}
\def\Wk {\spaziok W}
\begin{document}

%
%

\title{\sc
	Existence of weak solutions to multiphase Cahn--Hilliard--Darcy and Cahn--Hilliard--Brinkman models for stratified tumor growth with chemotaxis and general source terms}

\author{Patrik Knopf \footnotemark[1] 
		\and Andrea Signori \footnotemark[2]}

\date{ }

\renewcommand{\thefootnote}{\fnsymbol{footnote}}

\footnotetext[1]{
    Department of Mathematics, 
    University of Regensburg, 
    93053 Regensburg, 
    Germany \newline
	\tt(\href{mailto:patrik.knopf@mathematik.uni-regensburg.de}{patrik.knopf@mathematik.uni-regensburg.de}).
}
	
\footnotetext[2]{
    Dipartimento di Matematica ``F. Casorati'',
    Universit\`a di Pavia,
    via Ferrata 5, 27100 Pavia,
    Italy\newline
	\tt(\href{mailto:andrea.signori01@unipv.it}{andrea.signori01@unipv.it}).
	}

\maketitle

\begin{center}
	\small
	{
		\textit{This is a preprint version of the paper. Please cite as:} \\  
		P.~Knopf, A.~Signori, \textit{Communications in Partial Differential Equations} \textbf{47} (2022), no. 2, 233–278.  \\ 
		\url{https://doi.org/10.1080/03605302.2021.1966803}
	}
\end{center}

\smallskip

%
%

\begin{abstract}
We investigate a multiphase Cahn--Hilliard model for tumor growth with general source terms. The multiphase approach allows us to consider multiple cell types and multiple chemical species (oxygen and/or nutrients) that are consumed by the tumor. Compared to classical two-phase tumor growth models, the multiphase model can be used to describe a stratified tumor exhibiting several layers of tissue (e.g., proliferating, quiescent and necrotic tissue) more precisely.
Our model consists of a convective Cahn--Hilliard type equation to describe the tumor evolution, a velocity equation for the associated volume-averaged velocity field, and a convective reaction-diffusion type equation to describe the density of the chemical species. The velocity equation is either represented by Darcy's law or by the Brinkman equation.
We first construct a global weak solution of the multiphase Cahn--Hilliard--Brinkman model.
After that, we show that such weak solutions of this system converge to a weak solution of the multiphase Cahn--Hilliard--Darcy system as the viscosities tend to zero in some suitable sense. This means that the existence of a global weak solution to the Cahn--Hilliard--Darcy system is also established.

\textit{Keywords:} Tumor growth; Multiphase model; Chemotaxis; Cahn--Hilliard equation; Brinkman's law; Darcy's law;  Limit of vanishing viscosities.

\textit{Mathematics Subject Classification:} 
	    35D30, 
	    35K35, 
	    35K86, 
	    35Q92, 
	    76D07, 
        92C17, 
	    92C50. 
\end{abstract}



\setlength\parindent{0ex}
\setlength\parskip{1ex}
\allowdisplaybreaks

%
%

\section{Introduction}

The growth of cancer cells is affected by many biological and chemical mechanisms. 
Although there already exists a large amount of experimental data resulting from clinical experiments, the possibilities of predicting tumor growth are still in great need of improvement.
In particular, it is crucial to gain a better understanding of the underlying biological mechanisms such as proliferation, chemotaxis and necrosis. 

In the recent past, several mathematical models for tumor growth have been developed and analyzed from many different viewpoints. Especially diffuse interface models have gained a lot of interest (see, e.g., \cite{FrigeriGrasselliRocca,HilhorstKampmannNguyenZee,OdenTinsleyHawkins,GarckeLamNuernbergSitka}) and, at least for some of them, it could already be shown that they compare very well with clinical data (cf.~\cite{AgostiEtAl,ACGH,BearerEtAl,FrieboesEtAl}). Therefore, such models might provide further insights into tumor growth dynamics, especially to understand its key mechanisms and to develop patient-specific treatment strategies.

Many of these diffuse interface models for tumor growth consist of a \emph{Cahn--Hilliard equation} with additional source terms to describe the tumor, coupled to a \emph{reaction-diffusion type equation} to describe chemical substances which are consumed by the tumor (usually oxygen and/or nutrients).
Most of these models are two-component phase field models, meaning that only two types of cells, namely tumor cells and healthy cells, are considered. We refer to
\cite{GarckeLam2,ColliGilardiHilhorst, FrigeriGrasselliRocca, frig-lam-roc, FLS, garcke-lam-signori, SS, col-gil-roc-spr, col-gil-roc-spr2} for the analysis of such models, and to \cite{GLS_OPT, signori1, signori2, signori3, signori4, signori5, RSS, CSS1, CSS2, Cavaterra, ColliGilardiRoccaSprekels, GarckeLamRocca, KahleLam, ST} for the investigation of associated optimal control problems.

It is further known that biological materials usually exhibit viscoelastic properties. For that reason, it was suggested in several works in the literature to include an additional velocity equation in tumor growth models to describe such effects. In some papers, the \emph{Stokes equation} was employed to describe the tumor as a viscous fluid (see, e.g., \cite{ByrneKing,Chaplain,FranksKing,Friedman,Friedman2}). 
In other works, \emph{Darcy's law}, which is usually used to describe a viscous flow permeating a porous medium, was chosen instead (cf.~\cite{ByrneChaplain,FranksKing2,Greenspan}). In general, in the context of tumor growth models, both descriptions are a reasonable choice as the Reynolds number associated with the biological tissues is very small. The decision between Stokes and Darcy depends on the concrete situation that is to be described.
However, from the viewpoint of mathematical analysis, Darcy's law is often more difficult to handle because no derivatives of the velocity field, which could be used to obtain additional regularity, are involved in the equation. 
In recent times, \emph{Brinkman's equation} has also become a popular option (cf.~\cite{Donatelli,Perthame,Srinivasan,Zheng}) as it interpolates between the Stokes type and the Darcy type description. 

The Cahn--Hilliard equation coupled to Darcy's law is sometimes also referred to as the Cahn--Hilliard--Hele--Shaw system (especially in the context of two-phase flows). We refer, for example, to \cite{Dede,Giorgini,Feng} for its mathematical investigation. 
The Cahn--Hilliard--Brinkman system was investigated, for instance, in \cite{bosiaconti,contigiorg}.
A two-component Cahn--Hilliard--Brinkman model for tumor growth (including a reaction-diffusion type equation to describe the nutrient density) was proposed and analyzed in \cite{EbenbeckGarcke}. A simplified variant of this model was studied in \cite{EbenbeckGarcke2,EbenbeckGarcke3,EbenbeckLam,eben-knopf1,eben-knopf2}.

Although such two-cell-species Cahn--Hilliard type models are very viable when describing the growth of a young tumor whose evolution is mainly governed by proliferation, they are somewhat limited when processes such as \emph{necrosis} (cf.~\cite{GarckeLamNuernbergSitka}) or \emph{hypoxia} (that is an undersupply of oxygen, cf.~\cite{Byrne}) of tumor cells have already taken place. Indeed, as illustrated in Figure~\ref{fig:tumor}, larger and more mature tumors tend to become \emph{stratified} (cf.~\cite{Roose,Wallace,Sheratt}), meaning that the tumor tissue consists of several layers where each of them exhibits different properties. Indeed, spectroscopic imaging and mapping techniques (see, e.g., \cite{Zhang-IB}) suggest that in many situations, a tumor consists of three layers: a quickly proliferating outer rim, an intermediate quiescent layer whose cells suffer from hypoxia, and a necrotic core whose cells have already died off. For a more detailed discussion, we refer the reader to Section~\ref{SEC:CONC}.

For these reasons, several multiphase models, which allow to describe multiple types of cell species and nutrients, have already been introduced in the literature. We refer the reader to \cite{AstaninPreziosi, araujo, Frieb, sciume, wise2008three, GarckeLamNuernbergSitka,Matioc,fritz} and the references therein. 

\paragraph{A multiphase Cahn--Hilliard model for tumor growth.}
In this paper, we combine the ideas of  \cite{GarckeLamNuernbergSitka} and \cite{EbenbeckGarcke}, and we consider the following \emph{multiphase Cahn--Hilliard model} for tumor growth:
\begin{subequations}
	\label{MCHB}
\begin{alignat}{2}
	\label{MCHB:1}
	\Div(\v) 
	&= \Sv(\bphi,\bsig)
		&&\quad \text{in $Q$},\\ 
	\label{MCHB:2}
	 \Div\big(\T(\bphi,\v,p)\big) + \nu\v 
	&= (\Grad\bphi)^\top \bmu  + (\Grad\bsig)^\top \Ns(\bphi,\bsig) 
		&&\quad \text{in $Q$},\\
	\label{MCHB:3}
	\delt\bphi + \Div(\bphi\otimes\v) 
	&= \Div\big( \CC(\bphi,\bsig) \Grad\bmu \big) 
	    + \Sp(\bphi,\bsig,\bmu)
		&&\quad \text{in $Q$},\\
	\label{MCHB:4}
	\bmu &= -\gamma\eps \Lap\bphi + \gamma\eps^{-1} \Psi_{\bphi}(\bphi) 
	    + \Np(\bphi,\bsig)
		&&\quad \text{in $Q$},\\
	\label{MCHB:5}
	\delt\bsig + \Div(\bsig\otimes\v) 
	&= \Div\big( \DD(\bphi,\bsig) \Grad\Ns(\bphi,\bsig) \big) 
	    - \Ss (\bphi,\bsig,\bmu)
		&&\quad \text{in $Q$},\\[1.5ex]
	\label{MCHB:6}
	\deln\bphi 
	&= \0
		&&\quad \text{on $\Sigma$},\\
	\label{MCHB:7}
\deln\bmu 
		&= \0
		&&\quad \text{on $\Sigma$},\\
	\label{MCHB:8}
	\DD(\bphi,\bsig) \Grad\N_{\bsig}(\bphi,\bsig) \n
	&= \SG(\bphi,\bsig) 
		&&\quad \text{on $\Sigma$},\\
	\label{MCHB:9}
	\T(\bphi,\v,p)\n &= 0
		&&\quad \text{on $\Sigma$},\\[1.5ex]
	\label{MCHB:10}
	\bphi\vert_{t=0}
	&= \bphi_0
	&&\quad \text{in $\Omega$},\\
	\label{MCHB:11}
	\bsig\vert_{t=0} 
	&= \bsig_0
	&&\quad \text{in $\Omega$}.
\end{alignat}
\end{subequations}

\smallskip
Here, $\Omega\subset\R^d$, with $d\in\{2,3\}$, denotes a bounded, smooth domain with boundary $\Gamma$, and $T>0$ stands for an arbitrary final time. 
The outward unit normal vector of $\Gamma$ is denoted by $\bf n$, $\deln$ denotes the corresponding outward normal derivative, whereas $\otimes$ denotes the standard tensor product between two vectors. 
We further use the notation $Q:=\Omega\times (0,T)$ and $\Sigma:=\Gamma\times (0,T)$ .

In this system of partial differential equations, the following quantities are involved:%
\begin{itemize}[leftmargin=*]
\item The tumor is represented by the vector-valued \emph{phase field function} $\bphi = (\varphi_1,...,\varphi_L)^\top$ (with $L\in\mathbb N$). 
For any $i\in\{1,...,L\}$, the component $\varphi_i$ denotes the volume fraction of the $i$-th tumor cell type. 
The healthy cells are represented by $\varphi_0$, which is defined as
\begin{align*}
    \varphi_0 := 1 - \sum_{i=1}^L \varphi_i
    \quad\text{in $Q$}.
\end{align*} 
This ensures that all volume fractions add up to one, that is
\begin{align}
\label{SUM}
    \sum_{i=0}^{L} \varphi_i = 1
    \quad\text{in $Q$}.
\end{align}
The vector of \emph{chemical potentials} associated with the phase field $\bphi$ is denoted by $\bmu = (\mu_1,...,\mu_L)^\top$. Moreover, $\CC(\cdot,\cdot)$ is the mobility tensor. 
The pair $(\bphi,\bmu)$ is mainly governed by the Cahn--Hilliard type subsystem \eqref{MCHB:3}--\eqref{MCHB:4}.
Here, $\Psi_{\bphi}$ denotes the gradient of a given \emph{multi-well potential} $\Psi$ that is a coercive function which is bounded from below and attains its global minimum at $\mathbf{0}$ and at the unit vectors $\mathbf{e}_i$, $i=1,...,L$.
By this choice, it is energetically favourable (cf.~\eqref{DEF:EN}) for the components $\varphi_i$, $i=0,...,L$ to attain values close to one (i.e., only the $i$-th cell type is present) or close to zero (i.e., the $i$-th cell type is not present) in most parts of the domain $\Omega$. These regions where only one cell type is present are separated by a diffuse interface whose thickness is related to the constant $\eps>0$. Therefore, $\eps$ is usually chosen to be very small. Moreover, the constant $\gamma>0$ is related to the surface tension at the interface.
\item The nutrients are represented by the vector-valued function $\bsig = (\sigma_1,...,\sigma_M)^\top$ (with $M\in\mathbb N$). For any $j\in\{1,...,M\}$, the component $\sigma_j \ge 0$ denotes the density distribution of the $j$-th chemical species. These chemical species are usually oxygen and carbohydrates which are consumed by the tumor cells. The functions $\N_{\bphi}$ and $\N_{\bsig}$ denote the partial derivatives of the \emph{chemical free energy density} $N$ with respect to the $\bphi$ and the $\bsig$ variable, respectively. Moreover, $\DD(\cdot,\cdot)$ denotes the mobility tensor corresponding to $\bsig$.
\item The function $\v = (v_1,...,v_d)$ represents the \emph{volume-averaged velocity field} of the mixture, and $p$ denotes the associated \emph{pressure}. 
The quantity $\nu$ in \eqref{MCHB:2} stands for the permeability and is assumed to be a positive constant.
The symbol $\T(\bphi,\v,p)$ denotes the \emph{viscous stress tensor} which is defined as
\begin{align}
	\label{stress}
	\T(\bphi,\v,p) := 2 \eta(\bphi) \D\v + \lambda(\bphi) \Div(\v) \I - p\I\,,
\end{align}
where
\begin{align}
    \label{symmgrad}
	\D\v := \frac 12 \Big( \Grad\v + (\Grad\v)^\top \Big)
\end{align}
stands for the \emph{symmetrized velocity gradient}. Here, $\eta$ and $\lambda$ are nonnegative functions representing the \emph{shear viscosity} and the \emph{bulk viscosity}, respectively. 

If $\eta$ and $\lambda$ are identically zero, \eqref{MCHB:2} is known as \emph{Darcy's law}, and we refer to the system \eqref{MCHB} as the \emph{multiphase Cahn--Hilliard--Darcy system} (MCHD). 
If $\eta$ and $\lambda$ are positive, \eqref{MCHB:2} is called the \emph{Brinkman equation}, which can be regarded as an interpolation between Darcy's law ($\eta=\lambda=0$ and $\nu>0$) and the Stokes equation ($\eta,\lambda>0$ and $\nu=0$). In this scenario, the system \eqref{MCHB} is referred to as the \emph{multiphase Cahn--Hilliard--Brinkman system} (MCHB).

\item The homogeneous Neumann boundary conditions \eqref{MCHB:6} and \eqref{MCHB:7} are standard choices for Cahn--Hilliard type equations. 
The condition \eqref{MCHB:6} entails that the mass flux over the boundary is zero. 
If the diffuse interface associated with the phase-field $\bphi$ intersects the boundary $\Gamma$, the condition \eqref{MCHB:7} enforces a perfect ninety degree contact angle. However, as we are mainly interested in situations where the tumor is confined in the domain $\Omega$ (i.e., the interface does not intersect the boundary at all), the condition \eqref{MCHB:7} is primarily motivated from the viewpoint of mathematical analysis.
\item  The condition \eqref{MCHB:8} describes the nutrient flux over the boundary which is governed by the source term $\SG$. In particular, if $\SG$ is identically zero, no nutrients can enter or leave the domain over the boundary.

In the Brinkman case ($\eta,\lambda>0$), the condition \eqref{MCHB:9} can be understood as a \emph{``no friction'' boundary condition} on the velocity field.  In contrast to more traditional boundary conditions, \eqref{MCHB:9} allows us to handle general solution dependent source terms $\Sv(\bphi,\bsig)$ in \eqref{MCHB:1}. For instance, the no-slip boundary condition $\v\vert_\Sigma = 0$ or the no-penetration boundary condition $\v\vert_\Sigma\cdot \n = 0$ would enforce the unpleasant compatibility condition $ \int_\Omega \Sv(\bphi,\bsig) \dx = 0$, which is avoided by using the no-friction boundary condition \eqref{MCHB:9}. A further advantage of the no-friction condition is that no boundary contributions of the velocity field appear in the weak formulation of the system \eqref{MCHB}. This is very favorable for the mathematical analysis and also for finite element approximations in the context of numerical methods. 

In the Darcy case ($\eta\equiv 0$ and $\lambda\equiv 0$), the boundary condition \eqref{MCHB:9} degenerates to a homogeneous Dirichlet boundary condition on the pressure, i.e., $p\vert_\Sigma = 0$.

\item The functions $\Sv$, $\Sp$, and $\Ss$ are generic source terms that can be specified depending on the application. In Section~\ref{SECT:CONC:S}, we present a concrete example for a suitable choice of these source terms in a four-cell-species tumor model.
\end{itemize}

Furthermore, it is worth mentioning that the model \eqref{MCHB} is associated with the following free energy (cf. \cite{GarckeLamNuernbergSitka}):
\begin{align}
    \label{DEF:EN}
    E(\bphi,\bsig) 
    = \intO  \gamma\eps^{-1} \Psi(\bphi) 
        + \frac {\gamma \eps}2 \sum_{i=1}^{L}|\nabla \varphi_i|^2 \dx 
      + \intO N(\bphi,\bsig) \dx.
\end{align}
Here, the first integral is referred to as the \emph{Ginzburg--Landau energy}. 
The second contribution is the \emph{chemical free energy}. It is associated with the nutrient density $N$ (cf.~\eqref{DEF:N1}) which is usually assumed to be of the form
\begin{align*}
	N(\bphi,\bsig) = \frac {\chi_{\bsig}}2|\bsig|^2 - G(\bphi,\bsig)
\end{align*}
for a suitable function $G$. A reasonable choice for the function $G$ in a four-cell-species tumor model is presented in Section~\ref{SECT:CONC:N}. 

In the absence of source terms (i.e., $\Sv\equiv 0$, $\Sp\equiv \mathbf{0}$, $\Ss\equiv \mathbf{0}$, and $\SG\equiv\mathbf{0}$), we obtain the following energy law:
\begin{align}
\label{EN:LAW}
	\begin{aligned} 
	&\ddt E(\bphi,\bsig)
	+ \intO 2 \eta(\bphi)|\D \v|^2+\nu|\v|^2 \dx
	\\ 
	&\quad
	+ \intO \CC(\bphi,\bsig) \Grad \bmu : \Grad \bmu 
	    +\DD(\bphi,\bsig) \Grad \N_{\bsig}(\bphi,\bsig) : \Grad \N_{\bsig}(\bphi,\bsig) \dx 
    = 0.
    \end{aligned}
\end{align}
If the tensors $\CC$ and $\DD$ are chosen  appropriately (i.e., at least positive semidefinite), then both integrals on the left-hand side are nonnegative. This implies that the energy is decreasing along solutions of the system \eqref{MCHB} over the course of time. Therefore, \eqref{EN:LAW} describes the {dissipation} of the free energy, and in this context, the integrals on the left-hand side of \eqref{EN:LAW} can be understood as the \emph{dissipation rate}.
This means that, at least in the absence of source terms, the model \eqref{MCHB} is \emph{thermodynamically consistent}.

The multiphase Cahn--Hilliard--Darcy model (MCHD) is heavily based on the model derived in \cite{GarckeLamNuernbergSitka}. 
The only difference is that we are using a different right-hand side in the velocity equation \eqref{MCHB:2}, which is of the same type as the one proposed for the two-cell-species scenario in \cite{EbenbeckGarcke}. We point out that this choice plays a crucial role in the derivation of the energy dissipation law \eqref{EN:LAW} and thus, it also provides some advantages for the mathematical analysis.

The Darcy type description is particularly suitable if the viscoelastic flow associated with the biological tissues is assumed to behave like a viscous fluid permeating a porous medium. Although there are some situations where this assumption is justified, this is not always the case. 
Therefore, the multiphase Cahn--Hilliard--Brinkman model (MCHB) might sometimes provide a better description. 
At least formally, the model (MCHB) converges to the model (MCHD) as the viscosities $\eta$ and $\lambda$ tend to zero. We will show that this asymptotic limit can be rigorously verified on the level of weak solutions. 

In \cite{GarckeLamNuernbergSitka}, also several numerical simulations for the system (MCHD) (with slightly different boundary conditions) were presented. In the case $L=M=1$, where only two cell species (namely tumor cells and healthy cells) and one nutrient species are considered, the existence of weak solutions to the Cahn--Hilliard--Darcy model with a special choice of source terms and slightly different boundary conditions compared to \eqref{MCHB} was established in \cite{GarckeLam1}.
For further mathematical investigations related to the two-cell-species Cahn--Hilliard--Darcy system we refer the reader to \cite{GarckeLam1,GarckeLam3,GarckeLam4,GarckeLamSitkaStyles} and the references therein.

Existence results for solutions to multiphase Cahn--Hilliard--Darcy systems for tumor growth which are related to \eqref{MCHB} can be found in \cite{DFRSS, FLRS}.
Although the models studied in \cite{DFRSS} and \cite{FLRS} allow for more general potentials $\Psi$ (including singular potentials like the logarithmic Flory--Huggins potential) which definitely makes the construction of solutions more challenging, they can at least to some extend be understood as a simplified variant of the model \eqref{MCHB}.
For instance, the systems investigated in \cite{DFRSS, FLRS} are limited to three cell species ($L=2$) and one nutrient species ($M=1$), the mobility tensors $\CC(\cdot,\cdot)$ and $\DD(\cdot,\cdot)$ are constant diagonal matrices, the nutrient equation is quasi-stationary, and chemotaxis mechanisms are neglected.

In the special case $L=1$ and $M=1$, the Cahn--Hilliard--Brinkman model (MCHB) was introduced in \cite{EbenbeckGarcke}, where also the existence of weak solutions was established. A numerical investigation can be found in \cite{EbenbeckGarcke3}. In \cite{EbenbeckGarcke2}, a simplified version of this model was investigated, where the time derivative and the convection term in the nutrient equation are neglected. This means that the simplified nutrient equation is a quasi-static elliptic equation. For this model, the authors proved strong well-posedness and showed that the solutions converge to the corresponding Cahn--Hilliard--Darcy model as the viscosities $\eta$ and $\lambda$ tend to zero. For the analysis of weak and stationary solutions of this system with singular potentials, we refer to \cite{EbenbeckLam}. In \cite{eben-knopf1,eben-knopf2}, optimal control problems for this simplified model were investigated. We further want to mention \cite{Dharmatti}, where the optimal control of a nonlocal Cahn--Hilliard--Brinkman model (without nutrient equation) was studied.

\paragraph{Structure of this paper.} The paper is structured as follows. In Section~\ref{SEC:CONC}, based on the general multiphase Cahn--Hilliard model \eqref{MCHB}, we present a concrete example for a four-cell-species tumor model ($L=3$) with one species of nutrient ($M=1$). In particular, we describe how the source terms and the chemical free energy density can be chosen (in accordance with the mathematical analysis) to describe biologically relevant mechanisms. In Section~\ref{SEC:MA}, we first fix some notation, recall auxiliary results and introduce assumptions that are necessary for the mathematical analysis. After that, we present the main results of our paper. The existence of a weak solution to 
(MCHB) is established in Theorem~\ref{THM:EXISTENCE:WEAK}. In Theorem~\ref{THM:EXISTENCE:WEAK:DARCY}, we show that the weak solutions of the system (MCHB) constructed in Theorem~\ref{THM:EXISTENCE:WEAK} converge to a weak solution to the 
system (MCHD) as the viscosities $\eta$ and $\lambda$ tend to zero in a suitable sense. 
This is indeed a novel result since even in the two-cell-species model presented in \cite{EbenbeckGarcke}, this asymptotic has not been investigated. In particular, this proves the existence of weak solutions to the model (MCHD). 
We point out that this ``Darcy limit'' was also established rigorously in \cite{EbenbeckGarcke2} for strong solutions to a related two-cell-species model with a simplified quasi-stationary nutrient equation.
 
The proof of Theorem~\ref{THM:EXISTENCE:WEAK} is given in Section~\ref{SECT:EX:MCHB}, whereas the proof of Theorem~\ref{THM:EXISTENCE:WEAK:DARCY} is presented in Section~\ref{SECT:EX:MCHD}.

\section{A concrete tumor model with four cell species}
\label{SEC:CONC}
In order to describe a stratified tumor by the system \eqref{MCHB}, we now suggest explicit choices for the source terms $\Sv$, $\Sp$, $\Ss$, and $\SG$ as well as the chemical free energy density $N$. As often suggested in the literature (see, e.g., \cite{Roose,Wallace,Sheratt} and the references therein), we assume that the tumor exibits three layers: 
\begin{itemize}
    \item a \emph{proliferating rim} whose cells consume nutrients and oxygen to proliferate rapidly,
    \item an intermediate \emph{quiescent region} whose cells do not proliferate any more as they suffer from hypoxia (lack of oxygen) and/or an undersupply of nutrients,
    \item and a \emph{necrotic core} whose cells have already died due to the lack of oxygen and nutrients.
\end{itemize}
An illustration of such a stratified tumor can be found in Figure~\ref{fig:tumor}. In the mathematical model, we thus choose $L=3$ to describe the three tumor layers as well as the healthy cells. 
The proliferating tumor cells are associated with $\varphi_1$, the component $\varphi_2$ stands for the quiescent tissue, whereas $\varphi_3$ corresponds to the necrotic region. The volume fraction of the healthy cells is thus given as
\begin{align*}
    \varphi_0 = 1 - \sum_{i=1}^3 \varphi_i\;.
\end{align*}
For simplicity, we restrict ourselves to consider oxygen and nutrients as one single chemical species, meaning that $M=1$. Therefore, the nutrient density is a scalar function; to emphasize this, we will thus write $\sigma$ instead of $\bsig$. 

\begin{figure}[ht!]
\bigskip\centering
\includegraphics[width=0.7\textwidth, tics=10]{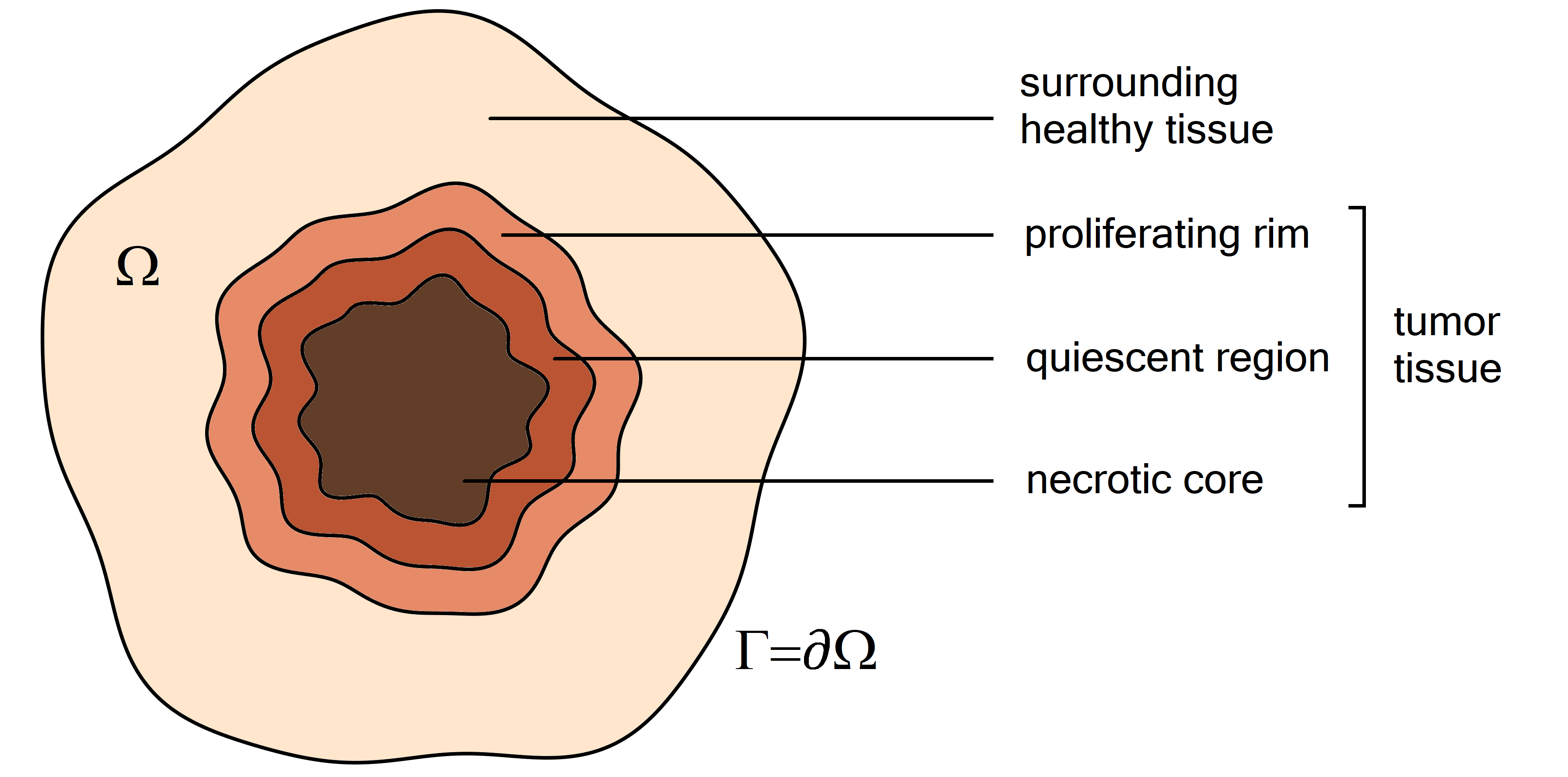}
\caption{Schematic representation of the layers of a stratified tumor ($L=3$).}
\label{fig:tumor}
\end{figure}


\subsection{The source terms}\label{SECT:CONC:S}

We first present some explicit choices for the source terms. We assume that $\Sv$, $\Sp$, and $\Ss$ depend only on $\bphi$ and $\sigma$ but not on $\bmu$. Thus, with some abuse of notation, we write
\begin{align*}
    \Sp(\bphi,\sigma) = \Sp(\bphi,\sigma,\bmu),\quad 
    \Ss(\bphi,\sigma) = \Ss(\bphi,\sigma,\bmu).
\end{align*}
For the source term of the nutrient equation \eqref{MCHB:5}, we make the ansatz
\begin{align}
    \label{DEF:Ss}
    \Ss(\bphi,\sigma) 
    &= \mathcal C \varphi_1 \sigma 
        - \mathcal B (\sigma_\Omega - \sigma).
\end{align}
Here, the term $- \mathcal C \varphi_1 \sigma$ describes the consumption of nutrients by the proliferating cells at a constant rate $\mathcal C > 0$.
Moreover, $\mathcal B$ denotes a positive amplifying constant, and the function $\sigma_\Omega$ stands for a given nutrient concentration provided by preexisting blood vessels permeating the tissue. Hence, the term
$\mathcal B (\sigma_\Omega - \sigma)$ describes supply ($\sigma<\sigma_\Omega$) or deprivation ($\sigma>\sigma_\Omega$) of nutrients by the vasculature.
In a scenario of pure avascular growth, this term can be neglected.

For the source term in the phase field equation, the following choices are reasonable:%
\begin{subequations}
    \label{DEF:SP}
\begin{align}
    \label{DEF:SP:1}
    \Sp(\bphi,\sigma) 
    &= \big(
            \varphi_1(\mathcal P\sigma - \mathcal Q),\,
            \mathcal Q\varphi_1 - \mathcal A \varphi_2,\,
            \mathcal A\varphi_2 - \mathcal D \varphi_3
        \big)^\top,
    \\
    \label{DEF:SP:2}
    \Sp(\bphi,\sigma) 
    &= \big(
            \tfrac{1}{\eps}\,\pp(\varphi_1)
                (\mathcal P\sigma - \mathcal Q),\,
            \tfrac{1}{\eps}\, \pp(\varphi_2)
                (\mathcal Q - \mathcal A),\,
            \tfrac{1}{\eps}\, \pp(\varphi_3)
                (\mathcal A - \mathcal D)
        \big)^\top.
\end{align}
\end{subequations}
We point out that similar choices were discussed in \cite{GarckeLamNuernbergSitka} for a three-cell-species tumor model ($L=2$) neglecting the quiescent region.
In \eqref{DEF:SP}, the positive constants $\mathcal P$, $\mathcal Q$, $\mathcal A$ and $\mathcal D$ denote the \emph{proliferation rate}, the \emph{quiescence rate}, the \emph{apoptosis rate}, and the \emph{degradation rate}, respectively. Moreover, $\pp$ stands for the polynomial $\pp(s)=s^2(1-s^2)^2$, $s\in\R$.

The option \eqref{DEF:SP:1} models the increase of proliferating tumor cells at the rate $\mathcal P$. The proliferating cells become quiescent at the rate $\mathcal Q$ which in turn means that the quiescent cells increase at the rate $\mathcal Q$. Similarly, due to apoptosis, the quiescent cells decrease and the necrotic cells increase at the rate $\mathcal A$. Eventually, the necrotic cells degrade at the rate $\mathcal D$.

The additional idea in \eqref{DEF:SP:2} is that the expressions $\pp(\varphi_i)$, $i=1,2,3$, are positive at the diffuse interface (i.e., in $(0,1)$) but zero at the values corresponding to the regions where only one cell type is present (i.e., in $\{0,1\}$). This means that the evolution of the interface is directly influenced by the source terms. The scaling factor $\frac 1 \eps$ is chosen as in \cite{GarckeLamNuernbergSitka,HilhorstKampmannNguyenZee} in order to retain the possibility of passing to the (formal) sharp interface limit $\eps\to 0$.

Furthermore, as shown in \cite{GarckeLamNuernbergSitka}, the property
\begin{align*}
    \sum_{i=0}^L \varphi_i = \varphi_0 + {\mathbf{1}}\cdot \bphi  = 1,
\end{align*}
where ${\mathbf{1}} = (1,...,1)^\top\in\R^L$, entails that the source term $\Sv$ needs to be chosen as
\begin{align}
    \label{DEF:SV*}
    \Sv(\bphi,\sigma) = \mathbf{1}\cdot \Sp(\bphi,\sigma) + S_{\varphi_0}(\bphi,\sigma),
\end{align}
where $S_{\varphi_0}(\bphi,\sigma)$ is the source term associated with the healthy tissue described by $\varphi_0$. 

For instance, if $\Sp$ is chosen as suggested in \eqref{DEF:SP:1}, a reasonable choice is 
\begin{align}
    \label{DEF:SP0}
    S_{\varphi_0}(\bphi,\sigma) &= - \kappa \, \mathcal P \sigma \, \varphi_1 
    \quad\text{for some $\kappa\in[0,1]$}.
\end{align}
In the case $\kappa=1$, the mass gain of tumor cells equals the mass loss of healthy cells. This would be the case if all newly emerged tumor cells originate from corrupted healthy cells. If $\kappa=0$, the formation of tumor cells does not mean any loss of healthy cells, whereas the choice $\kappa\in(0,1)$ interpolates between these rather extreme scenarios.

If $\Sp$ is given by \eqref{DEF:SP:2}, we recommend to choose $S_{\varphi_0}(\bphi,\sigma) = 0$ as proposed in \cite{GarckeLamNuernbergSitka}.

Although the options \eqref{DEF:SP}, \eqref{DEF:SV*} and \eqref{DEF:SP0} make sense from the modeling perspective, they do not fulfill the assumptions \ref{ass:sources} and \ref{ass:Sv} we have to make in Section~\ref{SECT:ASS} for the mathematical analysis.
Namely, we require that
\begin{align*}
    \abs{\Sv(\bphi,\sigma)} \le A,\qquad
    \abs{\Sp(\bphi,\sigma)} \le B (|\bphi| + |\sigma| + 1)
\end{align*}
for constants $A,B>0$ depending neither on $\bphi$ nor on $\sigma$. 

To overcome this issue, we replace the term $\mathcal P\sigma$ in \eqref{DEF:SP} by a bounded expression. We assume that there exists a critical nutrient concentration $c_p>1$ such that the proliferation does not increase any more, even if a larger amount of nutrient ($\sigma>c_p$) is available. Therefore, we introduce a nondecreasing function $P\in C^1_b(\R)$ which satisfies
\begin{align}
    \label{COND:P}
    \begin{cases}
        P(s) = \mathcal P s 
        &\text{for all  $s \in [0,c_p-1] $},\\
        P(s) = \mathcal Pc_p 
        &\text{for all $s \in [c_p,\infty)$}.
    \end{cases}
\end{align}

Moreover, for any fixed $r>0$, we introduce a truncation function $\h_r\in C^1_b(\R)$ which satisfies 
\begin{align*}
    \h_r(s) = s \quad
    \text{for all $s \in [-r,1+r]$.}
\end{align*}
If the multi-well potential $\Psi$ is reasonably chosen and $r>0$ is not too small, the values of the components $\varphi_i$ will not exceed the interval $[-r,1+r]$. Choosing $r=1$ should usually be more than enough to ensure this condition. In this case, replacing $\varphi_i$ by $\h_r(\varphi_i)$ does not have any effect on the solution of the system \eqref{MCHB}. We thus choose
\begin{subequations}
\label{DEF:SPM}
\begin{align}
    \begin{aligned}
        \label{DEF:SPM:1}
        \Sp(\bphi,\sigma) 
        &= \big(
                \h_r(\varphi_{1})\, P(\sigma) - \mathcal Q \varphi_1,\,
                \mathcal Q\varphi_1 - \mathcal A \varphi_2,\,
                \mathcal A\varphi_2 - \mathcal D \h_r(\varphi_{3})
            \big)^\top,\\
        S_{\varphi_0}(\bphi,\sigma) 
        &= -\kappa P(\sigma) \h_r(\varphi_{1})\, \qquad\text{for some $\kappa\in [0,1]$},
    \end{aligned}
\end{align}
or
\begin{align}
    \label{DEF:SPM:2}
    \begin{aligned}
    \Sp(\bphi,\sigma) 
    &= \big(
            \tfrac{1}{\eps}\, \pp_r(\varphi_1)
                \big(P(\sigma) - \mathcal Q\big),\,
            \tfrac{1}{\eps}\, \pp_r(\varphi_2)
                (\mathcal Q - \mathcal A),\,
            \tfrac{1}{\eps}\, \pp_r(\varphi_3)
                (\mathcal A - \mathcal D)
        \big)^\top,\\
    S_{\varphi_0}(\bphi,\sigma) 
    & = 0,
    \end{aligned}
\end{align}
\end{subequations}
where $\pp_r:=\pp\circ \h_r$ is a bounded function.
It is easily seen that both \eqref{DEF:SPM:1} and \eqref{DEF:SPM:2} satisfy the assumption \ref{ass:sources}. Moreover, if the source term $\Sv$ is chosen as proposed in \eqref{DEF:SV*}, it fulfills the assumption \ref{ass:Sv} for any $\kappa\in[0,1]$. 

For the source term $\SG$ appearing in the boundary condition \eqref{MCHB:8} for the nutrient equation \eqref{MCHB:5}, we assume that it depends only on the nutrient density $\sigma$. With some abuse of notation, we thus write
\begin{align*}
    S_\Gamma (\sigma) = \SG (\bphi, \sigma).
\end{align*}
As suggested in \cite{EbenbeckGarcke,EbenbeckGarcke2}, we propose the choice
\begin{align}
    \label{concrete:SG}
     S_\Gamma (\sigma) = K (\sigma_\Gamma - \sigma),
\end{align}
where $\sigma_\Gamma$ is a given function describing a preexisting nutrient supply over the boundary, and $K$ is a nonnegative permeability constant.
Notice that in the case $K>0$, \eqref{MCHB:8} is a Robin type boundary condition, whereas if $K=0$, it reduces to a no-flux condition. Moreover, the formal asymptotic limit $K \to \infty$ would produce the Dirichlet condition $\sigma = \sigma_\Gamma$ on $\Sigma$. 
This limit has been investigated in \cite{EbenbeckGarcke2} for a simplified two-cell-species version of the system \eqref{MCHB} with a quasi-stationary nutrient equation (with $M=1$).

\subsection{The chemical free energy density}\label{SECT:CONC:N}

For the chemical free energy, we use a similar decomposition of $N$ as proposed in \cite[Sect.~1]{GarckeLamNuernbergSitka}. Namely, we choose 
\begin{align}
    \label{DEF:N1}
	N(\bphi,\sigma) = \frac {\chi_{\bsig}}2|\sigma|^2 - G(\bphi,\sigma),
\end{align}
where the function $G$ is defined as
\begin{align}
    \label{DEF:N2}
     G(\bphi,\sigma) 
    := \chi_{\bphi} \sigma \varphi_1 
        + \f(\sigma) \varphi_2
        + \g(\sigma) \varphi_3.
\end{align}
Here, the term $\chi_{\bphi} \sigma \varphi_1$ describes the \emph{chemotaxis} mechanism which drives the proliferating tumor cells to grow towards regions of high nutrient concentration. 
We further assume that there exist critical nutrient concentrations $0<c_n<c_q<\infty$ and functions $\f, \g\in C^2(\R)$ that satisfy the following conditions:
\begin{align}
    \label{DEF:N3}
    & \begin{cases}
        \text{$\f> 0$ on $(c_n,c_q)$,} \\
        \text{$\f\le 0$ on $[c_q,\infty)$,}\\
    \end{cases}\\[1ex]
    \label{DEF:N4}
    &\begin{cases}
        \text{$\g>0$ on $(-\infty,{c_n})$,} \\
        \text{$\g\le 0$ on $[{c_n},\infty)$.}
    \end{cases}
\end{align}
The reasons behind these choices are the following:
\begin{itemize}
    \item If the nutrient concentration lies between the critical values $c_n$ and $c_q$, we expect the cells to become quiescent due to a lack of nutrient.
    This means that the amount of quiescent cells (that are associated with $\varphi_2$) will increase.
    We describe this behavior by assuming that $\f$ is positive on $(c_n,c_q)$.
    As a consequence the whole term $ \f(\sigma) \varphi_2$ is positive, provided that $\varphi_2$ is positive, and with regard to the energy $E$ presented in \eqref{DEF:EN}, it is thus energetically favorable if $\varphi_2$ further increases. 
    
    \item If the nutrient concentration is below the critical value $c_n$, we expect the cells to necrotize, meaning that the amount of necrotic cells (described by $\varphi_3$) will increase.
    To model this behavior, we assume that $\g>0$ on $(-\infty,c_n)$.
    This entails that the term $ \g(\sigma) \varphi_3$ becomes positive if $\varphi_3$ is positive. It is thus energetically favorable if $\varphi_3$ increases. 
    
\item  We point out that in \eqref{DEF:N3}, the sign of $\f$ on the interval $(-\infty,c_n)$ is not prescribed as this strongly depends on the modeling details. For instance, if $\f\le 0$ on $(-\infty,c_n)$, the cells will not become quiescent as long as the nutrient concentration is below $c_n$. They will rather necrotize due to the term $\g(\sigma)\varphi_3$. On the other hand, if $\f > 0$ on $(-\infty,c_n)$, there is a competition between quiescence and necrosis effects. 
\end{itemize} 
 In the mathematical analysis, we are only able to handle the case where $\f$ and $\g$ are affine functions (see Assumption~\ref{ass:nutrient}). Hence, in accordance with \eqref{DEF:N3} and \eqref{DEF:N4}, the only reasonable choices are
\begin{align*}
    \f(s) := \alpha (c_q - s),
    \quad s\in\R,
    \qquad\text{and}\qquad
    \g(s) := \beta (c_n - s),
    \quad s\in\R,
\end{align*}
with given positive constants $\alpha$ and $\beta$ acting as weights.

\subsection{The tensors $\CC$ and $\DD$} \label{SECT:TENS}
In general, the mobility tensors $\CC$ and $\DD$ can be fourth-order tensors (see \ref{ass:tensors} in Section~\ref{SECT:ASS}) or second-order tensors (i.e., matrices; see Remark~\ref{REM:ASS}(a)). A simple but very common choice is to choose $\CC \in\R^{L\times L}$ and $\DD \in\R^{M\times M}$ as diagonal matrices with uniformly positive functions as diagonal entries. However, it is worth mentioning that under the assumptions in Section~\ref{SECT:ASS}, even more complicated choices would be possible, as long as the tensors are uniformly positive definite.

In the present scenario with $L=3$ and $M=1$, we consider a matrix-valued function $\CC:\R^3\times\R\to\R^{3\times 3}$ and a scalar function $\DD:\R^3\times\R\to\R$. 
We assume that the function $\DD$ is uniformly positive, and that
\begin{align}
    [\CC(\bphi,\sigma)]_{ij} = m_i(\bphi,\sigma) \delta_{ij},\quad  i,j = 1,...,L,
\end{align}
where $m_i:\R^3\times\R\to\R$, $i=1,...,L$ are given, uniformly positive functions and $\delta_{ij}$ stands for the Kronecker symbol.

%
%

\section{Mathematical analysis} \label{SEC:MA}

\subsection{Notation} 
We first fix some notation that will be used throughout the paper.

The natural numbers excluding zero are denoted by $\mathbb N$, whereas the natural numbers including zero are denoted as $\mathbb N_0$.
For any Banach space $X$ we denote its associated norm by $\norm{\,\cdot\,}_X$, and its topological dual space by $X'$. The duality pairing of $X'$ and $X$ is denoted by $\<\cdot,\cdot>_X$. If $X$ is a Hilbert space, we denote its inner product by $(\cdot, \cdot)_X$. Note that for Banach spaces $X$ and $Y$, the intersection
$X\cap Y$ is also a Banach space with respect to the norm $\|\cdot\|_{X \cap Y}:=
\|\cdot\|_X+\|\cdot\|_Y$. 

For any $k\in\mathbb N_0$, $C^k(U)$ stands for the space of $k$-times continuously differentiable functions on any set $U$ for which this definition makes sense. The subspace $C^k_b(U)$ consists of all bounded functions in $C^k(U)$ whose partial derivatives up to the order $k$ are also bounded. Note that $C^k_b(U)$ is a Banach space with respect to its standard norm which is denoted by $\norm{\,\cdot\,}_{C^k_b}$.
Moreover, $C^k_c(U)$ denotes the space of $C^k(U)$-functions that have compact support in $U$. In the case $k=0$, we just write $C(U)=C^0(U)$, $C_b(U)=C_b^0(U)$, and $C_c(U)=C_c^0(U)$.

For any $1 \leq p \leq \infty$ and $k \geq 0$, the standard Lebesgue and Sobolev spaces defined on $\Omega$ are denoted as $L^p(\Omega)$ and $W^{k,p}(\Omega)$, and the corresponding norms are denoted as $\norm{\,\cdot\,}_{L^p(\Omega)}=\norm{\,\cdot\,}_{L^p}$ and $\norm{\,\cdot\,}_{W^{k,p}(\Omega)}=\norm{\,\cdot\,}_{W^{k,p}}$, respectively. 
In the case $p = 2$, these spaces are Hilbert spaces and we use the standard notation $H^k(\Omega) = W^{k,2}(\Omega)$. A similar notation is used for Lebesgue and Sobolev spaces on $\Gamma$, where the norms are denoted by $\norm{\,\cdot\,}_{L^p(\Gamma)}=\norm{\,\cdot\,}_{L^p_\Gamma}$ and $\norm{\,\cdot\,}_{W^{k,p}(\Gamma)}=\norm{\,\cdot\,}_{W^{k,p}_\Gamma}$.

For brevity, we sometimes write $\LL^p$, $\WW^{k,p}$, $\HH^{k}$ and $\LL^p_\Gamma$, $\WW_\Gamma^{k,p}$, and $\HH_\Gamma^{k}$ to denote the corresponding spaces of vector or matrix-valued functions
defined on $\Omega$ and $\Gamma$, respectively.

We further define the Hilbert space
\begin{align*}
    H^2_\n(\Omega;\R^n) := \{ \bz \in H^2(\Omega;\R^n):
        \deln \bz = 0 \;\text{a.e. on $\Gamma$} \}
\end{align*}
for any $n\in\mathbb{N}$.
Moreover, we set
\begin{align*}
    \LL^2_{\Div}(\Omega):= \{ {\bf f} \in L^2(\Omega;\R^d): \Div ({\bf f}) \in L^2(\Omega)\}.
\end{align*}
Here, the relation $\Div ({\bf f}) \in L^2(\Omega)$ means that the divergence exists in the weak sense and belongs to $L^2(\Omega)$. Notice that $\LL^2_{\Div}(\Omega)$ is a Hilbert space when equipped with the inner product
\begin{align*}
    \inn{\bf f}{\bf g}_{\LL^2_{\Div}}^2
    := \inn{\bf f}{\bf g}_{\LL^2}^2
    +\biginn{\Div({\bf f} )}{\Div({\bf g} )}_{L^2}^2
    \quad\text{for all ${\bf f},{\bf g}\in \LL^2_{\Div}(\Omega)$,}
\end{align*}
and its induced norm.
Moreover, we recall that for any ${\bf f}\in \LL^2_{\Div}(\Omega)$, the expression ${\bf f} \cdot {\bf n}$ is well defined on $\Gamma$ by the following integration by parts formula:
\begin{align}
    \label{int:parts}
    \< {\bf f} \cdot {\bf n}, \phi >_{H^{1/2}(\Gamma)}
    = 
    \intO {\bf f} \cdot \nabla \phi \dx
    + \intO \phi \, \Div ({\bf f}) \dx
    \quad \text{for all $\phi \in H^1(\Omega)$;}
\end{align}
see, e.g., \cite[Sect.~III.2]{Galdi}. Moreover, there exists a positive constant $C_{\Div}$, which depends only on $\Omega$, such that
\begin{align*}
    \norm{{\bf f} \cdot {\bf n}}_{(H^{1/2}_{\Gamma})'}
    \leq C_{\Div}\norm{{\bf f}}_{\LL^2_{\Div}}.
\end{align*}

For vectors $\mathbf{a}=(a_1,...,a_k)^\top\in\R^k$ and $\mathbf{b}=(b_1,...,b_l)^\top\in\R^l$, we denote the standard tensor product by $\mathbf{a}\otimes\mathbf{b}$ which produces an element of $\R^{k\times l}$ and is defined componentwise as $(\mathbf{a}\otimes\mathbf{b})_{ij} = a_i b_j$ for all $i\in\{1,...,k\}$, $j\in\{1,...,l\}$. 

For given matrices ${\bf A,B}\in \R^{n\times m }$, we define the scalar product
\begin{align*}
	{\bf A : B} \,:= \sum_{i=1}^n \sum_{j=1}^m [{\bf A}]_{ij}\, [{\bf B}]_{ij} \;.
\end{align*}
Furthermore, for any fourth order tensor $\CC$ in $\R^{n\times m \times n\times m}$, and any matrix ${\bf A}\in \R^{n\times m }$, we set 
\begin{align*}
	[\CC {\bf A}]_{ij} 
	: = \sum_{k=1}^n \sum_{l=1}^m \,[\CC]_{ijkl} \, [{\bf A}]_{kl}
	\quad\text{for all $i\in\{1,...,n\}$ and $j\in\{1,...,m\}$,}
\end{align*} 
and we use the notation
\begin{align*}
    \abs{\mathbf{A}} 
        = \Bigg(\sum_{i=1}^n \sum_{j=1}^m
        \abs{[\mathbf A]_{ij}}^2 \Bigg)^{\frac 12}, 
    \qquad
    \abs{\mathbb{C}} 
        = \Bigg(\sum_{i,k=1}^n \sum_{j,l=1}^m 
        \abs{[\mathbf C]_{ijkl}}^2 \Bigg)^{\frac 12}.
\end{align*}

\subsection{Auxiliary results} 
Before diving into the obtained results, let us first introduce some useful auxiliary results.

The following interpolation result for Sobolev spaces on bounded domains can be found in \cite[Sec~4.3.1, Thm.~1]{triebel}.
\begin{lemma}[Interpolation between Sobolev spaces]
    \label{LEM:INT}
    Suppose that $\Omega\subset\R^d$ with $d \in \mathbb N$ is a bounded smooth domain. Furthermore, let $\theta\in (0,1)$ be arbitrary, and let $r$, $s_0$, and $s_1$ be any real numbers satisfying
    $$r = (1-\theta) s_0 + \theta s_1.$$ 
    Then $H^r(\Omega)$ is the real interpolation of $H^{s_0}(\Omega)$ and $H^{s_1}(\Omega)$ with interpolation parameter $\theta$, that is,
    \begin{align*}
        H^r(\Omega) = \big( H^{s_0}(\Omega) , H^{s_1}(\Omega) \big)_{\theta,2}\;.
    \end{align*}
    In particular, there exists a constant $C>0$ such that for all $f\in H^{s_0}(\Omega) \cap H^{s_1}(\Omega)$,
    \begin{align*}
        \norm{f}_{H^r(\Omega)} 
        \le C \norm{f}_{H^{s_0}(\Omega)}^{1-\theta}
            \norm{f}_{H^{s_1}(\Omega)}^{\theta}.\\
    \end{align*}
\end{lemma}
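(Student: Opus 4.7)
The plan is to reduce the interpolation identity on the bounded smooth domain $\Omega$ to the corresponding statement on $\R^d$ via a universal extension operator, and then to prove the whole-space case by Fourier-analytic means.

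First I would treat $\Omega = \R^d$. On the whole space, $H^s(\R^d)$ coincides with the Bessel potential space, and the Fourier multiplier $J^s f := \mathcal F^{-1}[(1+|\xi|^2)^{s/2}\mathcal F f]$ is an isometric isomorphism from $H^s(\R^d)$ onto $L^2(\R^d)$. Transported by $\mathcal F$, the couple $(H^{s_0}(\R^d), H^{s_1}(\R^d))$ becomes a couple of weighted $L^2$-spaces with weights $(1+|\xi|^2)^{s_i/2}$, for which Peetre's $K$-functional admits the pointwise-in-$\xi$ formula
\[
K(t, f) \simeq \bigl\| \min\{(1+|\xi|^2)^{s_0/2},\, t\,(1+|\xi|^2)^{s_1/2}\}\, \hat f \bigr\|_{L^2}.
\]
Inserting this into the definition of the real interpolation norm $\|f\|_{\theta,2}^2 = \int_0^\infty (t^{-\theta} K(t,f))^2 \, dt/t$, Fubini and an explicit $t$-integration recover exactly the $H^r$-norm with $r = (1-\theta)s_0 + \theta s_1$. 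This establishes $(H^{s_0}(\R^d), H^{s_1}(\R^d))_{\theta,2} = H^r(\R^d)$ with equivalent norms.

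Second, I would transfer this identity to $\Omega$. Since $\partial\Omega$ is smooth, Stein's extension operator $E\colon H^s(\Omega) \to H^s(\R^d)$ is bounded simultaneously for every $s$ in the range of interest, and the restriction operator $R$ satisfies $R \circ E = \operatorname{id}$. Thus $(E,R)$ is a coretraction-retraction pair for the Sobolev scale, and the classical retraction-coretraction principle of real interpolation (see, e.g., Bergh--L{\"o}fstr{\"o}m, Theorem~6.4.2) yields
\[
(H^{s_0}(\Omega), H^{s_1}(\Omega))_{\theta,2} = R\bigl((H^{s_0}(\R^d), H^{s_1}(\R^d))_{\theta,2}\bigr) = R(H^r(\R^d)) = H^r(\Omega),
\]
with equivalent norms. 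The interpolation inequality claimed in the lemma is then the standard $K$-method bound: using $K(t,f) \le \min\{\|f\|_{H^{s_0}},\, t\,\|f\|_{H^{s_1}}\}$ and splitting the integral at $t_0 := \|f\|_{H^{s_0}}/\|f\|_{H^{s_1}}$ produces the bound $C\,\|f\|_{H^{s_0}}^{1-\theta}\|f\|_{H^{s_1}}^\theta$; combined with the norm equivalence from the retraction step this gives the asserted estimate.

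The main obstacle is securing the simultaneous boundedness of the extension operator over the full range of Sobolev indices $\{s_0, s_1, r\}$, especially when one or more of them are negative. On a $C^\infty$-boundary this is classical — Stein's mirror-reflection construction handles $s \ge 0$, and duality (using the identification of negative-index spaces as duals of the positive-index ones) handles $s < 0$ — but the bookkeeping required to check that $E$ and its adjoint are compatible with both scales is the genuinely delicate point. Once this is arranged, the Fourier computation on $\R^d$ and the retraction principle are entirely routine.
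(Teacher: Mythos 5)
The paper does not prove this lemma; it cites it directly from Triebel \cite[Sec.~4.3.1, Thm.~1]{triebel}, whose argument is essentially the two-step reduction you propose (whole-space Fourier computation plus the retraction--coretraction principle), so your outline follows the standard route found in the cited literature and in Bergh--L\"ofstr\"om. The frequency-space $K$-functional computation and the split of the $t$-integral are correct.

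There is, however, a concrete gap at exactly the point you flag, and the fix you sketch does not work. The retraction--coretraction principle requires a \emph{single} operator $E$ with $R\circ E=\mathrm{id}$ that is bounded from $H^{s}(\Omega)$ to $H^{s}(\R^d)$ \emph{simultaneously} for $s=s_0$ and $s=s_1$; two different extensions tailored to the two endpoints do not allow you to invoke the principle. When one endpoint is negative (and the paper in fact uses the lemma with $s_0=-1$), Stein's reflection covers only $s\ge 0$, and your proposed duality argument does not supply the missing operator: the dual of $H^{\sigma}(\Omega)$, with $H^{\sigma}(\Omega)$ understood as the restriction of $H^{\sigma}(\R^d)$, is not $H^{-\sigma}(\Omega)$ but the space $\widetilde H^{-\sigma}(\Omega)$ of $H^{-\sigma}(\R^d)$-distributions supported in $\overline\Omega$, so the adjoint $R^{*}$ is merely the inclusion $\widetilde H^{-\sigma}(\Omega)\hookrightarrow H^{-\sigma}(\R^d)$ and is not an extension operator for $H^{-\sigma}(\Omega)$. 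What is actually needed is a genuine universal extension operator bounded on the full scale $s\in\R$ --- on a $C^\infty$ boundary one can use the Seeley reflection with infinitely many carefully chosen coefficients, or Rychkov's construction, both of which cover the required range --- and once such an $E$ is in hand the rest of your argument goes through as written.
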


\medskip

Next, we recall a well-known result related to the solvability of the divergence equation. The lemma and the corresponding proof can be found, e.g., in \cite[Sec. III.3]{Galdi}.
\begin{lemma}
	\label{LEM:diveq}
	Let $\Omega \subset \R^d$, $d \geq 2$, be a bounded domain with \Lip\ boundary $\Gamma$, and let $q \in (1,\infty)$ be arbitrary.
	Then, for every $f \in L^q(\Omega)$ and ${\bf a} \in \WW^{1- \frac 1{q},1}(\Gamma)$ with
	\begin{align}
		\intO f \dx = \intG {\bf a} \cdot {\bf n} \dS,
		\label{cond}
	\end{align}
	there exist a strong solution $\bu \in \WW^{1,q}(\Omega)$ to the problem
	\begin{align*}
		\begin{cases}
		\Div (\bu) = f \quad  &\hbox{in $\Omega,$}
		\\ 
		\bu = {\bf a} \quad & \hbox{on $\Gamma,$ }
		\end{cases}
\end{align*}		
and a positive constant $C_{\Omega,q}$ depending only on $\Omega$ and $q$, such that
\begin{align*}
	\norm{\bu}_{\WW^{1,q}(\Omega)} \leq C_{\Omega,q}\Big(\norm{f}_{L^q(\Omega)} + \norm{\bf a}_{ \WW^{1-  1/q,1}(\Gamma)} \Big).
\end{align*}
\end{lemma}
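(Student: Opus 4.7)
The plan is to prove the lemma via the classical Bogovskii construction, proceeding by first reducing to homogeneous boundary data and then solving the resulting interior problem with zero trace.

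First, I would handle the boundary data. Using a standard trace-inversion (lifting) theorem for Sobolev spaces on Lipschitz domains, one obtains an extension $\bu_0 \in \WW^{1,q}(\Omega)$ with $\bu_0\vert_\Gamma = {\bf a}$ and the estimate $\norm{\bu_0}_{\WW^{1,q}} \le C_1 \norm{{\bf a}}_{\WW^{1-1/q,1}(\Gamma)}$, where $C_1$ depends only on $\Omega$ and $q$. Setting $\w := \bu - \bu_0$, it suffices to find $\w \in \WW^{1,q}(\Omega)$ with vanishing trace satisfying $\Div(\w) = \tilde f$ in $\Omega$, where $\tilde f := f - \Div(\bu_0) \in L^q(\Omega)$. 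Crucially, $\tilde f$ has zero mean: by the integration-by-parts identity \eqref{int:parts} applied to $\bu_0$ with test function $1$, together with the compatibility condition \eqref{cond}, one finds
\begin{align*}
    \intO \tilde f \dx
    = \intO f \dx - \intG \bu_0 \cdot \n \dS
    = \intO f \dx - \intG {\bf a} \cdot \n \dS = 0.
\end{align*}

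Next, I would construct the solution $\w$ to the reduced homogeneous problem using the Bogovskii operator. If $\Omega$ is star-shaped with respect to an open ball $B \ssubset \Omega$, one fixes a non-negative cut-off $\omega \in C_c^\infty(B)$ with $\int_B \omega \dy = 1$ and defines
\begin{align*}
    \w(x) = \intO \tilde f(y) \, \mathbf{K}(x,y) \dy,
\end{align*}
where the kernel $\mathbf{K}(x,y)$ is built from $\omega$ by integration along rays emanating from points of $B$. A direct computation shows $\Div(\w) = \tilde f$ (using $\int \tilde f = 0$), that $\w$ vanishes near $\Gamma$ by construction, and that $\partial_{x_j} \w_i$ is given by a singular integral operator of Calderón--Zygmund type acting on $\tilde f$. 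For a general bounded Lipschitz domain, I would decompose $\Omega$ into finitely many subdomains star-shaped with respect to balls, partition $\tilde f$ via a partition of unity with compensating terms so that each local piece still has vanishing mean, solve locally, and sum the contributions.

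The main obstacle, and the technically substantive step, is the $L^q \to L^q$ boundedness of the singular-integral operator defining $\Grad \w$. This requires verifying that $\mathbf{K}$ satisfies the size, smoothness, and cancellation conditions of the Calderón--Zygmund theory, yielding a bound $\norm{\Grad \w}_{\LL^q} \le C_2 \norm{\tilde f}_{L^q}$ valid for all $q \in (1,\infty)$ with $C_2$ depending on $\Omega$ and $q$ only (through the CZ constants); the bound $\norm{\w}_{\LL^q} \le C_3 \norm{\tilde f}_{L^q}$ follows by \Poin's inequality. Combining these estimates with $\norm{\tilde f}_{L^q} \le \norm{f}_{L^q} + C \norm{\bu_0}_{\WW^{1,q}}$ and undoing the reduction via $\bu = \w + \bu_0$ produces the stated inequality with $C_{\Omega,q}$ depending only on $\Omega$ and $q$.
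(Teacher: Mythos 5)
The paper does not prove this lemma but instead cites \cite[Sec.~III.3]{Galdi}, where the result is established precisely via the Bogovski\u{\i}-operator construction you sketch: lift the boundary data, reduce to the zero-mean, zero-trace problem, solve it on star-shaped subdomains by the kernel formula, glue with a compensated partition of unity, and obtain the $L^q$-gradient bound from Calder\'on--Zygmund theory. Your proposal therefore reproduces the standard argument behind the cited reference; the only small imprecision is invoking \eqref{int:parts} (which is stated for $\LL^2_{\Div}$) to show $\tilde f$ has zero mean --- for $\bu_0\in\WW^{1,q}$ with $q<2$ you should instead appeal directly to the Gauss divergence theorem for $W^{1,q}$-fields on Lipschitz domains, which gives the same identity.
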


\bigskip

We further recall the following inequalities:
\begin{itemize}
    \item \textbf{Korn's inequality}: 
    Let $\Omega \subset \R^d$ with $d \geq 2$ be a bounded domain with Lipschitz-boundary.
    There exists a positive constant $C_\text{K}$ depending only on $\Omega$ such that for all $\v \in \HH^1(\Omega)$,
    \begin{align}
        \label{Korn}
        \norm{\v}_{\HH^1} \leq C_\text{K} \Big (\norm{\v}^2_{\LL^2} + \norm{\D\v}^2_{\LL^2} \Big)^{1/2}.
    \end{align}
    \item \textbf{Gagliardo--Nirenberg inequality}:
    Let $\Omega \subset \R^d$ with $d \geq 2$ be a bounded domain with Lipschitz-boundary. 
    We assume that $p,q,r \in [1,\infty]$, $m,j\in\mathbb N_0$ with $0\le j < m$, and $\theta \in [\frac jm, 1]$ satisfy the relation
    \begin{align*}
        j- \frac dp = \left(m -\frac dr\right)\theta + \left(- \frac dq \right)(1-\theta).
    \end{align*}
    Then, there exists a positive constant $C_\text{GN}$ depending only on $\Omega$, $d$, $m$, $j$, $p$, $q$, $r$, and $\theta$, such that for all ${\bf f} \in \WW^{m,r}(\Omega) \cap \LL^q(\Omega)$,
    \begin{align}
        \label{GN}
        \norm{{D^j  \bf f}}_{\LL^p} \leq C_\text{GN} \norm{{\bf f}}_{\WW^{m,r}}^{\theta}\norm{{\bf f}}_{\LL^q}^{1-\theta}.
    \end{align}
    \item  \textbf{Agmon's inequality} (see, e.g., \cite[Lem.~4.10]{ConstantinFoias}):
    Let $\Omega \subset \R^d$ with $d \in\{2,3\}$ be an open, bounded domain of class $C^2$.
    There exists a positive constant $C_\text{AG}$ such that for all $\mathbf{f}\in \HH^2(\Omega)$,
    \begin{align}
        \label{AG}
        \norm{\mathbf{f}}_{\LL^\infty} 
        \le C_\text{AG} \norm{\mathbf{f}}_{\HH^1}^{\frac 12} \norm{\mathbf{f}}_{\HH^2}^{\frac 12}.
    \end{align}
    \end{itemize}

\subsection{Assumptions} \label{SECT:ASS}

The following assumptions are supposed to hold throughout the paper.

\newcounter{assump}
\begin{enumerate}[label=$\bf {A \arabic*}$, ref = $\bf {A \arabic*}$]

\item \label{ass:dom} \stepcounter{assump}
The set $\Omega\subset\R^d$ with $d \in \{2,3\}$ is a smooth, bounded domain with boundary $\Gamma:=\del\Omega$. Moreover, the parameters $\nu$, $\eps$, and $\gamma$ are given positive constants.

\item \label{ass:tensors} \stepcounter{assump}
The phase mobility tensor $\CC$ and the nutrient mobility tensor $\DD$ are  bounded, continuously differentiable functions 
\begin{align}
    \label{DEF:CD}
    \CC: \R^L \times \R^M \to \R^{L\times d \times L\times d }\;, 
    \quad
    \DD: \R^L \times \R^M \to \R^{M\times d \times M\times d }.
\end{align}
 Moreover, $\CC$ and $\DD$ are symmetric in the following sense:
\begin{alignat*}{2}
    [\CC]_{ijkl} &= [\CC]_{klij} 
    \quad\text{for all $i,k\in\{1,...,L\}$ and $j,l\in\{1,...,d\}$,}
    \\
    [\DD]_{ijkl} &= [\DD]_{klij} 
    \quad\text{for all $i,k\in\{1,...,M\}$ and $j,l\in\{1,...,d\}$.}
\end{alignat*}
There further exist positive constants $C_0$ and $D_0$ such that
for all $\p \in\R^L$, $\s\in\R^M $, $\mathbf{A} \in \R^{L\times d}$, and $\mathbf{B} \in \R^{M\times d}$,
\begin{equation*}
  C_0 |\mathbf{A}|^2 
  \leq \CC({\p, \s}) \mathbf{A} : \mathbf{A}, 
  \qquad 
  D_0 |\mathbf{B}|^2 
  \leq \DD({\p, \s}) \mathbf{B} : \mathbf{B}.
  \end{equation*}
This means that the tensors $\CC(\p,\s)$ and $\DD(\p,\s)$ are uniformly positive definite for all $\p \in\R^L$ and $\s\in\R^M $. 

\item \label{ass:visc} \stepcounter{assump}
The viscosities are functions $\eta,\lambda\in C^1_b(\R^L; \R)$, and there exist constants $\eta_0,\eta_1,\lambda_*$ such that, for every $\p \in \R^L$, it holds
\begin{align}\label{EST:VISC}
   0< \eta_0 \leq \eta (\p) \leq \eta_1,
    \qquad 
    0 \leq \lambda (\p) \leq \lambda_*.
\end{align}

\item \label{ass:nutrient} \stepcounter{assump}
The chemical free energy density $N$ is defined as
\begin{align}
    \label{DEF:N}
    N:\R^L\times\R^M \to \R, \quad
    N(\p,\s) 
    = \frac {\chi_{\bsig}}2|\s|^2 - G(\p,\s).
\end{align}
 Here, $\chi_{\bsig}$ is a positive constant, and the function $G$ is given as
\begin{align}
    G(\p,\s) = 
        \s^\top \mathbb{B} \p 
        + \mathbf{a}\cdot\p 
        + \mathbf{b}\cdot\s
        + c
    \quad\text{for all $\p\in\R^L$, $\s\in\R^M$},
\end{align}
with prescribed coefficients $\mathbb{B}\in\R^{M\times L}$,  $\mathbf{a} \in \R^L$, $\mathbf{b} \in \R^M$ and $c \in \R$.
We will use the notation
\begin{subequations}
\begin{alignat*}{2}
    \Gp(\p,\s) &:= \del_\p G(\p,\s) 
    = \mathbb{B}^\top \s + \mathbf{a},  
    &\quad\Np(\p,\s) &:= \del_\p N(\p,\s)
    =-\mathbb{B}^\top \s- \mathbf{a},\\
    \Gs(\p,\s) &:= \del_\s G(\p,\s) 
    = \mathbb{B}\p + \mathbf{b},
    &\quad\Ns(\p,\s) &:= \del_\s N(\p,\s)
    = \chi_{\bsig} \s - \mathbb{B} \p - \mathbf{b},\\
    \Gsp(\p,\s) &:= \del_\s\del_\p G(\p,\s) = \mathbb{B},
    &\quad\Nsp(\p,\s) &:= \del_\s\del_\p N(\p,\s) = -\mathbb{B}, \\
    \Gss(\p,\s) &:= \del_\s^2 G(\p,\s) = \mathbf{0}, 
    &\quad\Nss(\p,\s) &:= \del_\s^2 N(\p,\s) = \chi_{\bsig}\,\mathbb{I}, \\
    \Gpp(\p,\s) &:= \del_\p^2 G(\p,\s) = \mathbf{0},
    &\quad\Npp(\p,\s) &:= \del_\p^2 N(\p,\s) = \mathbf{0},
\end{alignat*}
\end{subequations}
where $\mathbb I$ stands for the identity matrix in $\R^{M\times M}$.
In particular, we have $\Np=-\Gp$, $\Nsp=-\Gsp$, and $\Npp=-\Gpp$.

Consequently, there exist positive constants 
$A_G$, $B_G$, $C_G$ and $D_G$
such that for all $\p\in\R^L$ and $\s\in\R^M$:
\begin{gather}
    \label{EST:G}
    |G(\p, \s)| \leq C_G ( |\p||\s|+|\p|+|\s|+1),
    \\
    \label{EST:DG}
    |\Gp(\p, \s) | \leq A_G (|\s|+1), 
    \quad 
    |\Gs(\p, \s)|  \leq B_G (|\p|+1), 
    \\
    \label{EST:DDG}
    \norm{\Gsp(\p, \s)} \leq D_G, 
\end{gather}
where $\norm{\,\cdot\,}$ stands for the operator norm.
This directly implies the existence of positive constants $A_N$, $B_N$, and $C_N$
such that for all $\p\in\R^L$ and $\s\in\R^M$:
\begin{gather}
    \label{EST:N}
    |N(\p,\s)| \leq C_N (|{\s}|^2 + |\p||\s| 
        + |\p| + |\s| + 1), \\
    \label{EST:DN}
    |\Np(\p,\s)| \leq A_N (|\s|+1), 
    \quad 
    |\Ns(\p,\s)|  \leq B_N (|\p|+|\s|+1).
\end{gather}
It further follows that $\Nss$ is uniformly positive definite with
\begin{align}
    \label{EST:PD:NSS}
    \bx^\top \Nss(\p,\s) \bx 
    = \chi_{\bsig}\abs{\bx}^2 
\end{align}
for all $\p\in\R^L$ and $\bx,\s\in\R^M$. In combination with the assumptions on the tensor $\mathbb D$ in \ref{ass:tensors}, this ensures that the nutrient equation \eqref{MCHB:5} has a parabolic structure.
Moreover, for all $\p\in\R^L$ and $\s\in\R^M$ the matrix $\Nss(\p,\s)$ is invertible and the operator norm of the inverse matrix is uniformly bounded by
\begin{align}
    \label{EST:INV:NSS}
    \norm{ \big( \Nss(\p,\s) \big)^{-1} } \le \chi_{\bsig}^{-1}.
\end{align}

\item \label{ass:sources} \stepcounter{assump}
The source terms $\S_{\bphi}$ and $\S_{\bsig}$ are  continuously differentiable, vector-valued functions
\begin{align}
    \label{DEF:SPS}
    \Sp:\R^L\times\R^M\times\R^L \to \R^L,
    \quad
    \Ss:\R^L\times\R^M\times\R^L \to \R^M.
\end{align}
We further assume that there exist continuously differentiable  functions 
\begin{alignat}{2}
    \label{DEF:LTP}
    \Lam_{\bphi}&:\R^L\times\R^M \to \R^L, 
    &\quad
    \Th_{\bphi}&:\R^L\times\R^M \to \R^{L\times L},
    \\
    \label{DEF:LTS}
    \Lam_{\bsig}&:\R^L\times\R^M \to \R^M, 
    &\quad
    \Th_{\bsig}&:\R^L\times\R^M \to \R^{M\times L},
\end{alignat}
such that $\Sp$ and $\Ss$ exhibit the following decomposition:
\begin{align}
    \label{DEC:SP}
	\S_{\bphi} (\p,\s,\m)& = \Lam_{\bphi}(\p,\s) 
	    - \Th_{\bphi}(\p,\s) \m ,
	\\ 
	\label{DEC:SS}
	\S_{\bsig} (\p,\s,\m)& = \Lam_{\bsig}(\p,\s) 
	    - \Th_{\bsig}(\p,\s) \m 
\end{align}
for all $\p,\m \in \R^L$ and $\s\in\R^M$.

Moreover, we demand that there exist positive constants $A_{\bphi}, A_{\bsig}, B_{\bphi}$ and $B_{\bsig}$ such that, for all $\p\in\R^L$ and $\s\in\R^M$, we have
\begin{alignat}{2}
    \label{EST:LAPH}
	&|\Lam_{\bphi}(\p,\s)|\leq A_{\bphi} ( |\p| + |\s| +1) , 
	&&\quad 
	\|\Th_{\bphi}(\p,\s)\| \leq B_{\bphi},\\
	\label{EST:LATH}
	&|\Lam_{\bsig}(\p,\s)|\leq A_{\bsig} ( |\p| + |\s| +1) , 
	&&\quad 
	\|\Th_{\bsig}(\p,\s)\| \leq B_{\bsig}.
\end{alignat}
In particular, this entails that there exists a positive constant $B_S$ such that, for all $\p,\m\in\R^L$ and $\s\in\R^M$,
\begin{align}
    \label{EST:SPS}
	|\Sp(\p,\s,\m) |+|\Ss(\p,\s,\m) | 
	\leq B_S ( |\p| + |\s| +|\m| +1).
\end{align}

\item \label{ass:Sv} \stepcounter{assump}
The source term $\Sv$ is a continuously differentiable scalar function
\begin{align}
    \label{DEF:SV}
    \Sv:\R^L\times\R^M \to \R .
\end{align}
We further assume that there exists a
positive constant $A_S$
such that for all $\p \in \R^L$ and $\s \in \R^M$,
\begin{align}
    \label{EST:SV}
	|\Sv(\p,\s) | \leq A_S.
\end{align} 

\item \label{ass:source:boundary} \stepcounter{assump}
The boundary source term $\S_{\Gamma}$ is continuously differentiable, vector-valued function
\begin{align*}
    \SG: {\R^L\times\R^M} \to \R^M.
\end{align*}
Furthermore, there exists a continuously differentiable function
\begin{alignat}{2}
    \label{DEF:LTG}
    \Lam_{\Gamma}:\R^L\times\R^M \to \R^M, 
\end{alignat}
and a nonnegative constant $K$ such that
\begin{align}
    \label{DEC:SG}
	\S_{\Gamma} (\p,\s) & =  K (\Lam_{\Gamma}(\p,\s)
	    - \s),
\end{align}
for all $\p \in \R^L$ and $\s\in\R^M$. 
Moreover, there exists a positive constant $A_\Gamma$ such that for all $\p\in\R^L$ and $\s\in\R^M$,
\begin{align}
    \label{EST:LAGA}
	|\Lam_\Gamma(\p,\s)|\leq A_\Gamma.
\end{align}

\item \label{ass:pot} \stepcounter{assump}
The potential $\Psi$ belongs to $C^2(\R^L; \R)$ and there exist positive constants $A_{\Psi},B_{\Psi}$ such that for every $\p \in \R^L$ it holds that
\begin{align}
    \label{superquad}
    \Psi (\p) \geq A_{\Psi}|\p|^2 - B_{\Psi} .
\end{align}
In addition, the potential can be decomposed as $\Psi= \Psi^{(1)} + \Psi^{(2)}$ with $\Psi^{(1)},\Psi^{(2)}\in C^2(\R^L;\R)$, where $\Psi^{(1)}$ is convex,
and $\Psi^{(2)}: \R^{L} \to {\R^L}$ is Lipschitz-continuous.

For the gradient and the Hessian of $\Psi$, we will write
\begin{align*}
    \Psi_{\bphi} := \nabla \Psi, \quad
    \Psi_{\bphi\bphi} := D^2 \Psi,
\end{align*}
and we will use an analogous notation for $\Psi^{(1)}$ and $\Psi^{(2)}$.

Moreover, we assume:
\begin{enumerate}[label = $\mathbf{A\theassump.\arabic*}$,ref = $\mathbf{A\theassump.\arabic*}$]
   
\item\label{ass:pot:1}
    If the matrix-valued function $\Th_{\bphi}$ in \ref{ass:sources} is uniformly positive definite, that is
    \begin{align}
    \label{COND:POSDEF}
        \exists \theta_0 >0 \quad \forall \p,\bz \in \R^L,\, \s\in\R^M:\quad
        \bz^\top \Th_{\bphi}(\p,\s) \bz \ge \theta_0 \abs{\bz}^2,
        \quad 
    \end{align}
    there exist an exponent $ \rho \in [2,4]$, and positive constants $B_\Psi$, $C_\Psi$ and $D_\Psi$, such that
    \begin{align}
        \label{PSI:CASE2}
        \begin{gathered}
        |\Psi (\p)|  \leq B_{\Psi}(|\p|^\rho+1),
        \quad 
        |\Psi_{\bphi} (\p)| \leq C_{\Psi}(|\p|^{\rho-1}+1),
        \\
        |\Psi_{\bphi\bphi} (\p)|\leq D_{\Psi}(|\p|^{\rho-2} + 1).
        \end{gathered}
    \end{align}
    for all $\p\in\R^L$.
    
    \item\label{ass:pot:2}
    If the matrix-valued function $\Th_{\bphi}$ in \ref{ass:sources} is \emph{not} uniformly positive definite, that is \eqref{COND:POSDEF} does not hold,
    there exist positive constants $B_\Psi$, $C_\Psi$ and $D_\Psi$ such that
    \begin{align}
        \label{PSI:CASE1}
        |\Psi (\p)| \leq B_{\Psi}(|\p|^2+1),
        \quad 
        |\Psi_{\bphi} (\p)| \leq C_{\Psi}(|\p|+1),
        \quad   
        |\Psi_{\bphi\bphi}(\p)| \leq D_{\Psi}
    \end{align}
    for all $\p\in\R^L$.
\end{enumerate}

\item \label{ass:interface} \stepcounter{assump}
The diffuse interface parameter $\eps$ is a fixed positive constant which satisfies
\begin{align}
    \label{COND:EPS}
    \eps < \frac{\gamma \chi_{\bsig} A_\Psi}{8 C_G^2}.
\end{align}
Here, $A_\Psi$ is the constant from \eqref{superquad}, $\chi_{\bsig}$ and $C_G$ are the constants from \ref{ass:nutrient}, and $\gamma>0$ is the surface tension parameter. 

\end{enumerate}

\bigskip
\begin{remark} \label{REM:ASS}
    \textnormal{(a)} Instead of fourth-order tensors, $\CC$ and $\DD$ could also just be matrices (second-order tensors) in $\R^{L\times L}$ and $\R^{M\times M}$, respectively. This because a matrix can still be described by an associated fourth-order tensor in the following way: 

    Suppose that $n,m\in\mathbb N$, and $\mathbf M \in \R^{n\times n}$ is a given matrix. Then the corresponding fourth-order tensor $\mathbb M\in \R^{n\times m\times n \times m}$ is defined as
    \begin{align*}
        [\mathbb M]_{ijkl} := \delta_{jl} [\mathbf M]_{ik} \quad\text{for all $i,k\in\{1,...,n\}$ and $j,l\in\{1,...,m\}$,}       
    \end{align*}
    where $\delta_{jl}$ stands for the Kronecker symbol.
    In particular, for any matrix $\mathbf A\in\R^{n\times m}$, it thus holds
    \begin{align*}
        [\mathbb M \mathbf A]_{ij} 
        = \sum_{k=1}^n \sum_{l=1}^m [\mathbb M]_{ijkl} [\mathbf A]_{kl}
        = \sum_{k=1}^n [\mathbf M]_{ik} [\mathbf A]_{kj}
        = [\mathbf M \mathbf A]_{ij}
        \quad\text{for all $i\in\{1,...,n\}$ and $j\in\{1,...,m\}$,}
    \end{align*}
    which means $\mathbb M \mathbf A = \mathbf M \mathbf A$.
    \\[1ex]
    \textnormal{(b)} We point out that the tensors, the nutrient density and the source terms proposed in Section~\ref{SEC:CONC} for the special case $L=3$ and $M=1$ fit into the framework of the above assumptions.    
    To be precise, 
    the source terms $\Ss$ defined in \eqref{DEF:Ss} and $\Sp$ introduced in \eqref{DEF:SPM} satisfy \ref{ass:sources} (with $\Th_{\bphi}\equiv \mathbf {0}$ and $\Th_{\bsig}\equiv \mathbf {0}$),
    the source term $\Sv$ proposed in \eqref{DEF:SV*} fulfills \ref{ass:Sv},
    the nutrient density defined in \eqref{DEF:N1}--\eqref{DEF:N4} satisfies \ref{ass:nutrient},
    and the tensors $\CC$ and $\DD$ introduced in Section~\ref{SECT:TENS} fulfill \ref{ass:tensors}.
    Moreover, the boundary source term proposed in \eqref{concrete:SG} satisfies \ref{ass:source:boundary} with $\Lam_\Gamma(\p,s) \equiv \sigma_\Gamma$, provided that the prescribed function $\sigma_\Gamma$ is sufficiently regular.
    \\[1ex]
    \textnormal{(c)} As the positive coefficient $\eps$ is related to the thickness of the diffuse interface, it is usually chosen to be very small in the applications. Therefore, assumption \ref{ass:interface} is not a severe restriction.
\end{remark}

%
%

\subsection{Main results} 
Let us now present the main results of this paper.
First, after introducing the notion of weak solutions to the multiphase Cahn--Hilliard--Brinkman system (MCHB), we state the existence of such a weak solution. 
Unfortunately, in this general setting, we are not able to prove the uniqueness of weak solutions. This is mainly due to the fairly low regularity of the nutrient variable $\bsig$, of which we can merely establish $\bsig \in \L\infty {\LL^2} \cap \L2 {\HH^1}$.

A weak solution to the multiphase Cahn--Hilliard--Brinkman system (MCHB) is defined as follows.

\pagebreak[2]

\begin{definition}[Definition of a weak solution to (MCHB)]
\label{DEF:WEAKSOL}
The quintuplet $(\bphi, \bmu, \bsig, \v, p)$ is called a weak solution to the multiphase Cahn--Hilliard--Brinkman system \eqref{MCHB}
if the following conditions are satisfied:
\begin{enumerate}[label=$\mathrm{(\roman*)}$,ref=$\mathrm{(\roman*)}$]
\item The functions $(\bphi, \bmu, \bsig, \v, p)$ possess the regularities
\begin{align}
\label{REG:MCHB}
\left\{\;
\begin{aligned}
	&\bphi \in  \H1 {(\HH^1)'} 
	\cap C([0,T];\LL^2)
	\cap \L\infty {\HH^1} ,
	\\ 
	&\bsig \in \W{1,\frac 43} {(\HH^1)'} 
	    \cap C([0,T];\Vp) 
	    \cap \L\infty{\LL^2}
	    \cap \L2 {\HH^1},
	\\
	&\bphi\vert_\Gamma \in C([0,T];\LL^2_\Gamma),
	\quad \bsig\vert_\Gamma \in \L4{\LL^2_\Gamma},
	\\
	&\bmu \in  \L2 {\HH^1},
	\quad
	\v \in  \L2 {\HH^1},
	\quad
	p \in \L{\frac 43}{L^2},
	\\
	& \Div(\bphi\otimes\v) \in L^2(0,T;\LL^{\frac 32}),
	\quad \Div(\bsig\otimes\v) \in  \L1 {\LL^{\frac 32}} .
\end{aligned}
\right.
\end{align}
\item The weak formulation 
\begin{subequations}
\label{WF}
\begin{align}
    \label{WF:1}
	&\intO  \T (\bphi, \v, p) : \nabla \be + \nu  \v \cdot \be  \dx
	=
    \intO (\nabla \bphi)^\top  \bmu \cdot \be + (\nabla \bsig)^\top \Ns(\bphi,\bsig) \cdot \be \dx,
	\\
	& \label{WF:2}
	\ang{\delt \bphi}{\bz}_{\HH^1}  + \intO \Div(\bphi\otimes\v)\cdot \bz \dx
	= - \intO \CC(\bphi,\bsig) \nabla \bmu : \nabla \bz + \S_{\bphi}(\bphi,\bsig,\bmu) \cdot \bz
	\dx,
	\\ & \label{WF:3}
	\intO \bmu \cdot \bt \dx = \intO \gamma\eps \nabla \bphi : \nabla \bt + \gamma\eps^{-1} \Psi_{\bphi}(\bphi) \cdot \bt
	+ \N_{\bphi}(\bphi,\bsig) \cdot \bt \dx,
	\\
	& \notag
	\ang{\delt \bsig}{\bz}_{\HH^1} 	
	+ \intO \Div(\bsig\otimes\v)\cdot \bx \dx
	= - \intO \DD(\bphi,\bsig) \nabla \Ns(\bphi,\bsig): \Grad \bx \dx
	\\ & \qquad \label{WF:4}
	- \intO \Ss(\bphi,\bsig,\bmu) \cdot \bx \dx
	+ \intG  \SG(\bphi,\bsig)\cdot \bx \dS,
\end{align}
holds almost everywhere on $(0,T)$ for all test functions $\be \in H^1(\Omega;\R^d)$, $\bz,\bt \in H^1(\Omega;\R^L)$ and $\bx \in H^1(\Omega;\R^M)$.
It further holds that
\begin{alignat}{2}
    \label{SF:DIV}
	\Div (\v) & = S_\v(\bphi,\bsig) 
	&&\quad a.e. \, \hbox{in} \,\, Q,
	\\
	\bphi\vert_{t=0}& = \bphi_0 
	&&\quad a.e. \, \hbox{in} \,\, \Omega, 
	\\
    \<\bsig\vert_{t=0}, \Phi>_{\HH^1} &= \<\bsig_0, \Phi>_{\HH^1} 
    &&\quad \text{for all $\Phi \in H^1(\Omega;\R^M)$.}
\end{alignat}
\end{subequations}
\end{enumerate}
\end{definition}

The corresponding existence result reads as follows.
\begin{theorem}[Existence of weak solution to (MCHB)]
\label{THM:EXISTENCE:WEAK}
Suppose that the assumptions \ref{ass:dom}--\ref{ass:interface} hold, and let $\bphi_0 \in H^1(\Omega;\R^L)$ and $\bsig_0 \in L^2(\Omega; \R^M)$ be any initial data.

Then, there exists a weak solution $(\bphi, \bmu, \bsig, \v, p)$ to system \eqref{MCHB} in the sense of Definition~\ref{DEF:WEAKSOL}. 
In addition, this solution satisfies 
\begin{align}
    \label{REG:MCHB:ADD}
    \bphi  \in \L2 {\HH^3} 
        \cap C([0,T];\HH^1), 
    \quad 
    \Psi_{\bphi}(\bphi) \in \L{4}{\LL^2}\cap \L{2}{\LL^6}.
\end{align}
Moreover, there exists a positive constant $C_{B}$ that may depend on the initial data and the constants introduced in Section~\ref{SECT:ASS} except for $\eta_0$ such that
\begin{align}
    \label{EST:CHB}
    &\norm{\bphi}_{\L\infty{\HH^1} \cap \L2{\HH^3}}
    +\norm{\bsig}_{\L\infty{\LL^2} \cap \L2{\HH^1}}
    +\norm{\bsig}_{\L4{\LL^2_\Gamma}}
    \notag\\
    &\quad
    +\norm{\bmu}_{\L2{\HH^1}}
    +\norm{\v}_{\L{2}{\LL^2}}
    +\bignorm{\sqrt{\eta(\bphi)}\, \D \v}_{\L2{\LL^2}}^2
    +\norm{p}_{\L{4/3}{L^2}}
    \notag\\
    &\quad
    + \norm{\Psi_{\bphi}(\bphi)}_{ \L{4} {\LL^{2}}\cap\L{2} {\LL^{6}} }
    + \norm{\nabla \Ns(\bphi,\bsig)}_{ \L{2} {\LL^{2}}} 
    \notag\\
    & \quad 
    +\norm{\Div(\bphi\otimes\v)}_{ \L{4/3}{\LL^{3/2}}}
    +\norm{\Div(\bsig\otimes\v)}_{ \L{1}{\LL^1}}
    \le C_B.
\end{align}

\end{theorem}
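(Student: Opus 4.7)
The plan is to construct weak solutions via a Faedo--Galerkin approximation, followed by uniform a priori estimates and compactness arguments. I would proceed in four main steps.

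\emph{Step 1 (Galerkin scheme).} Let $\{w_i\}_{i\in\mathbb{N}}$ be an orthonormal basis of $L^2(\Omega)$ consisting of eigenfunctions of the Neumann Laplacian, which also form an orthogonal basis of $H^1(\Omega)$ and $H^2_\n(\Omega;\R)$, and set $\CWK := \mathrm{span}\{w_1,\dots,w_k\}$. For each $k$, I seek $\bphi_k, \bmu_k \in \CWK^L$ and $\bsig_k \in \CWK^M$ solving the Galerkin projections of \eqref{WF:2}--\eqref{WF:4} with projected initial data. Given $(\bphi_k,\bmu_k,\bsig_k)$, the Brinkman subsystem \eqref{MCHB:1}, \eqref{MCHB:2}, \eqref{MCHB:9} is solved directly in the continuous setting: writing $\v_k = \v_k^{(0)} + \tilde{\v}_k$ with $\v_k^{(0)}$ a solution of $\Div(\v_k^{(0)})=\Sv(\bphi_k,\bsig_k)$ provided by Lemma~\ref{LEM:diveq}, the Lax--Milgram lemma applied to the resulting coercive (by \ref{ass:visc} and Korn's inequality \eqref{Korn}) problem for $\tilde{\v}_k$ yields $\v_k\in\HH^1(\Omega)$; the pressure $p_k \in L^2(\Omega)$ is then recovered via a Ne\v{c}as/de Rham argument from the inf-sup condition for the divergence on $\Omega$. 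Substituting back produces an ODE system for the Galerkin coefficients, whose local solvability follows from Cauchy--Peano and whose globality follows from the estimates of Step 2.

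\emph{Step 2 (A priori estimates).} Testing the discrete Cahn--Hilliard system with $(\bmu_k, \delt\bphi_k)$, the nutrient equation with $\Ns(\bphi_k,\bsig_k)$, and the momentum equation with $\v_k$, and summing, the convection and chemotaxis contributions telescope against the right-hand side of \eqref{MCHB:2} up to source terms, producing a discrete analogue of the energy identity \eqref{EN:LAW}. Exploiting the coercivity \eqref{superquad}, the uniform positive definiteness of $\CC$, $\DD$ and $\Nss$ (cf.~\ref{ass:tensors}, \eqref{EST:PD:NSS}), the growth conditions in \ref{ass:sources}--\ref{ass:source:boundary}, and the smallness condition \eqref{COND:EPS} to absorb the cross term originating from $\Np$ via \eqref{EST:G}, Grönwall's lemma yields uniform bounds on $\bphi_k$ in $\L\infty{\HH^1}$, on $\bsig_k$ in $\L\infty{\LL^2}\cap\L2{\HH^1}$, on $\nabla\bmu_k$ and $\nabla\Ns(\bphi_k,\bsig_k)$ in $\L2{\LL^2}$, and on $\v_k$ and $\sqrt{\eta(\bphi_k)}\,\D\v_k$ in $\L2{\LL^2}$; the trace interpolation $\|\bsig_k\|_{\LL^2_\Gamma}^2 \lesssim \|\bsig_k\|_{\LL^2}\|\bsig_k\|_{\HH^1}$ then upgrades the boundary trace to $\L4{\LL^2_\Gamma}$. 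Testing \eqref{WF:3} with constants and using \ref{ass:pot} controls the mean of $\bmu_k$, yielding $\bmu_k \in \L2{\HH^1}$ by Poincaré. Treating \eqref{MCHB:4} as an elliptic Neumann problem for $\bphi_k$, a bootstrap argument (using Agmon's inequality \eqref{AG} and Gagliardo--Nirenberg \eqref{GN}) delivers $\bphi_k \in \L2{\HH^3}\cap C([0,T];\HH^1)$ and $\Psi_\bphi(\bphi_k) \in \L4{\LL^2}\cap\L2{\LL^6}$, where the case distinction \ref{ass:pot:1}/\ref{ass:pot:2} is used according to whether the dissipative term $\Th_\bphi$ is available for higher-order closure. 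Finally, rewriting \eqref{MCHB:2} as $\nabla p_k = (\text{controlled right-hand side})$ in the dual of $\HH^1$ and applying the inf-sup condition yields $p_k \in \L{4/3}{L^2}$; the exponent $4/3$ is dictated by the chemotaxis term $(\nabla\bsig_k)^\top \Ns(\bphi_k,\bsig_k)$, whose only available estimate uses $\bsig_k\in \L\infty{\LL^2}\cap\L2{\HH^1}$. Time-derivative estimates $\delt\bphi_k \in L^2(0,T;(\HH^1)')$ and $\delt\bsig_k \in L^{4/3}(0,T;(\HH^1)')$ then follow from the equations combined with interpolation and Sobolev embeddings to handle $\Div(\bphi_k\otimes\v_k)$ and $\Div(\bsig_k\otimes\v_k)$.

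\emph{Step 3 (Compactness and passage to the limit).} The Aubin--Lions--Simon lemma allows the extraction of a subsequence such that $\bphi_k \to \bphi$ strongly in $C([0,T];\LL^2)\cap\L2{\HH^2}$ and a.e.\ in $Q$, $\bsig_k \to \bsig$ strongly in $\L2{\LL^2}$, with the natural weak/weak-$*$ limits for $\bmu_k$, $\v_k$, $p_k$. Together with the growth conditions on $\Psi_\bphi$, $\Np$, $\Ns$, $\CC$, $\DD$, $\Sp$, $\Ss$, $\Sv$, $\SG$, these convergences suffice to identify the limits in all nonlinear terms of \eqref{WF:1}--\eqref{SF:DIV}: the convective terms via weak-strong pairing (strong convergence of $\bphi_k$, $\bsig_k$ paired with weak convergence of $\v_k$ in $\HH^1$), the chemotactic term $(\nabla\bsig_k)^\top\Ns(\bphi_k,\bsig_k)$ since $\Ns(\bphi_k,\bsig_k)\to\Ns(\bphi,\bsig)$ strongly in $\L2{\LL^2}$ (by the affine structure in \ref{ass:nutrient}) while $\nabla\bsig_k \wto \nabla\bsig$ weakly in $\LL^2(Q)$. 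The initial data pass to the limit thanks to the continuous embeddings in \eqref{REG:MCHB}.

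\emph{Main obstacle.} The principal difficulty is the low regularity of $\bsig$, which is merely $\L\infty{\LL^2}\cap\L2{\HH^1}$. This simultaneously forces the reduced time regularity $\delt\bsig \in \W{1,4/3}{(\HH^1)'}$, the reduced pressure integrability $p\in \L{4/3}{L^2}$, and the only $\L1{\LL^{3/2}}$ bound on $\Div(\bsig\otimes\v)$; moreover the identification of the limit in the chemotactic coupling $(\nabla\bsig)^\top\Ns(\bphi,\bsig)$ requires a careful weak-strong compensation because no strong convergence of $\nabla\bsig_k$ is available. Handling these issues consistently---so that all terms make sense in the asserted regularity class \eqref{REG:MCHB}--\eqref{REG:MCHB:ADD}---is the technically delicate part of the argument.
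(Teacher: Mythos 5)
Your outline reproduces the paper's Faedo--Galerkin strategy, but there is a genuine gap at the heart of Step~2. You propose to test the discrete momentum equation \eqref{DISC:WF:1} with $\v_k$ itself. Since the velocity is not solenoidal — indeed $\Div(\v_k) = \Sv(\bphi_k,\bsig_k)$ — the pressure part of the viscous stress $\T$ contributes the term
\begin{align*}
-\intO p_k\,\Div(\v_k)\dx \;=\; -\intO p_k\,\Sv(\bphi_k,\bsig_k)\dx
\end{align*}
to the would-be energy balance, and this term cannot be absorbed: the only bound on $p_k$ (in $\L{4/3}{L^2}$) is obtained \emph{after} the energy estimate and itself uses the energy bounds on $\v_k$, $\bmu_k$, $\nabla\Ns(\bphi_k,\bsig_k)$, so the argument becomes circular. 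The fix is exactly the divergence lifting you already introduced in Step~1 (your $\v_k^{(0)}$, the paper's $\bu$): the momentum equation must be tested with $\v_k-\bu_k$, where $\bu_k(t)$ solves $\Div(\bu_k)=\Sv(\bphi_k,\bsig_k)$ with a prescribed normal boundary trace, supplied by Lemma~\ref{LEM:diveq}. Since $\Div(\v_k-\bu_k)=0$, the pressure term vanishes, and the correction terms involving $\bu_k$ are benign because $\norm{\bu_k(t)}_{\WW^{1,q}}\le C\norm{\Sv(\bphi_k,\bsig_k)}_{L^q}\le C$ for any $q$ (by \ref{ass:Sv} and Lemma~\ref{LEM:diveq}), which gives a uniform $L^\infty$ bound on $\bu_k$ via Sobolev embedding. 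Without this device the discrete energy identity \eqref{eq:energy} does not close and the entire chain of estimates collapses.

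The rest of your outline is consistent with the paper's proof: the paper likewise uses Neumann--Laplacian eigenfunctions, tests the chemical potential equation with $-\Lap\bphi_k$ and $\Lap^2\bphi_k$ for the $\HH^3$ estimate, uses the trace interpolation to upgrade $\bsig_k\vert_\Gamma$ to $\L4{\LL^2_\Gamma}$, applies Aubin--Lions--Simon, and identifies limits via weak-strong pairing and a generalized dominated convergence theorem. Two of your deviations are harmless: the paper solves the discrete Brinkman subsystem by the variable-viscosity Stokes result of Abels--Terasawa \cite{AbelsTerasawa} (yielding $\v_k\in\HH^2$, $p_k\in H^1$) rather than Lax--Milgram plus de Rham, and it obtains the $L^2$ bound on $p_k$ by testing with an explicit lifting $\mathbf{q}$ satisfying $\Div(\mathbf{q})=p_k$ (again via Lemma~\ref{LEM:diveq}) rather than appealing to an abstract inf-sup constant; both routes are valid and interchangeable. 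The case distinction \ref{ass:pot:1}/\ref{ass:pot:2} is, however, invoked in the energy estimate (to control $\norm{\bmu_k}_{\LL^2}$ either through the positive-definite $\Th_{\bphi}$ or through \eqref{WF:3} directly), not primarily in the higher-order closure as you suggest.
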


The proof of this theorem will be presented in Section~\ref{SECT:EX:MCHB}.

The second main result is the existence of weak solutions to the multiphase Cahn--Hilliard--Darcy system (MCHD).
Roughly speaking, this can be established by investigating the ``Darcy limit'' where the positive viscosities $\eta$ and $\lambda$ in the Cahn--Hilliard--Brinkman system (MCHB) are sent to zero. In this way, we can show that the corresponding weak solutions of the system (MCHB) converge to a weak solution of the system (MCHD).

To begin with, let us start by presenting the notion of weak solution for the Cahn--Hilliard--Darcy model.

\begin{definition}[Definition of a weak solution to (MCHD)]
\label{DEF:WEAKSOL:DARCY}
The quintuplet $(\bphi, \bmu, \bsig, \v, p)$ is called a weak solution to the multiphase Cahn--Hilliard--Darcy system \eqref{MCHB}
if the following conditions are satisfied:
\begin{enumerate}[label=$\mathrm{(\roman*)}$,ref=$\mathrm{(\roman*)}$]
\item The functions $(\bphi, \bmu, \bsig, \v, p)$ possess the regularities 
\begin{align}
\label{REG:MCHD}
 \left\{\;
 \begin{aligned}
	&\bphi \in  \W{1,\frac 85} {(\HH^1)'} \cap C([0,T];\LL^2) \cap \L\infty {\HH^1},
	\\ 
	&\bsig \in \W{1,1} {(\WW^{1,4})'} 
	    \cap C([0,T];(\WW^{1,4})')
	    \cap \L\infty{\LL^2}
	    \cap \L2 {\HH^1},
	\\
	&\bphi\vert_\Gamma \in C([0,T];\LL^2_\Gamma),
	\quad \bsig\vert_\Gamma \in \L4{\LL^2_\Gamma},
	\quad \bmu \in  \L2 {\HH^1},
	\\
	& p \in  \L{\frac 43}{L^2} \cap L^1\big(0,T;{W^{1,\frac 32}_0}\big),
	\quad
	\v \in  \L2 {\LL^2_\Div},
	\\
	&\Div(\bphi\otimes\v) \in L^{\frac 85}(0,T;(\HH^1)') \cap  L^{\frac 43}(0,T;\LL^{\frac 32}), 
	\quad \Div(\bsig\otimes\v) \in L^{1}(0,T;\LL^{1}).
    \end{aligned}
\right.
\end{align}
\item The weak formulation 
\begin{subequations}
\label{WF:D}
\begin{align}
   \label{WF:D:1}
    &\intO \Grad p \cdot \be + \nu \v \cdot \be \dx = 
    \intO (\nabla \bphi)^\top  \bmu \cdot \be + (\nabla \bsig)^\top \Ns(\bphi,\bsig) \cdot \be \dx,
	\\ & 
	\label{WF:D:2}
	\< \delt \bphi , \bz >_{\HH^1}  + \intO \Div(\bphi\otimes\v)\cdot \bz \dx
	= - \intO \CC(\bphi,\bsig) \nabla \bmu : \nabla \bz + \S_{\bphi}(\bphi,\bsig,\bmu) \cdot \bz
	\dx,
	\\ & 
	\label{WF:D:3}
	\intO \bmu \cdot \bt \dx = \intO \gamma\eps \nabla \bphi : \nabla \bt + \gamma\eps^{-1} \Psi_{\bphi}(\bphi) \cdot \bt
	+ \N_{\bphi}(\bphi,\bsig) \cdot \bt \dx,
	\\ & \notag
	\< \delt \bsig , \bx >_{\WW^{1,4}}  	
	+ \intO \Div(\bsig \otimes \v) \cdot \bx \dx
	= - \intO \DD(\bphi,\bsig) \nabla \Ns(\bphi,\bsig): \Grad \bx \dx
	\\ & 
	\label{WF:D:4}
	\qquad 
	- \intO \Ss(\bphi,\bsig,\bmu) \cdot \bx \dx
	+ \intG  \SG(\bphi,\bsig)\cdot \bx \dS,
\end{align}
holds almost everywhere on $(0,T)$ for all test functions $\be \in L^2(\Omega;\R^d)$, $\bz,\bt \in H^1(\Omega;\R^L)$, and $\bx \in W^{1,4}(\Omega;\R^M) $.
Moreover, the following conditions are fulfilled
\begin{alignat}{2}
    \label{SF:D:DIV}
    \Div(\v) &= \Sv(\bphi,\bsig)
    &&\quad \text{a.e.~in $Q$}, \\
    \label{INI:D:1}
	\bphi\vert_{t=0}& = \bphi_0 
	&&\quad \text{a.e.~in $\Omega$}, \\
	\label{INI:D:2}
    \<\bsig\vert_{t=0}, \Phi>_{\WW^{1,4}} 
    &= \<\bsig_0, \Phi>_{\WW^{1,4}} 
    &&\quad\text{for all $\Phi \in W^{1,4}(\Omega;\R^M)$.}
\end{alignat}
\end{subequations}
\end{enumerate}
\end{definition}

The sense in which weak solutions to (MCHB) 
convergence to a weak solution of (MCHD) is specified by the following theorem.

\begin{theorem}[``Darcy limit'' and existence of a weak solution to (MCHD)]
\label{THM:EXISTENCE:WEAK:DARCY}
Suppose that \ref{ass:dom}--\ref{ass:interface} are fulfilled, and let  $\bphi_0 \in H^1(\Omega;\R^L)$ and $\bsig_0 \in L^2(\Omega; \R^M)$ be arbitrary initial data.
Furthermore, let $\{\eta_n\}_{n \in \mathbb{N}}$ and $\{\lambda_n\}_{n \in \mathbb{N}}$ be sequences of viscosity functions such that for each fixed $n\in\mathbb N$, $\eta_n$ and $\lambda_n$ are compatible with \ref{ass:visc}.
We further assume that
\begin{align}
    \label{darcy:ass:viscosities}
    \norm{\eta_n}_{C_b(\R)} \to 0, 
    \qquad 
    \norm{\lambda_n}_{C_b(\R)} \to 0,
    \quad
    \text{as $n\to\infty$.}
\end{align}
For any $n\in\mathbb N$, let $(\bphi_n, \bmu_n, \bsig_n, \v_n, p_n)$ denote the weak solution of the multiphase Cahn--Hilliard--Brinkman system \eqref{MCHB} constructed in Theorem~\ref{THM:EXISTENCE:WEAK} associated with the viscosities $\eta_n$ and $\lambda_n$.

Then, there exists a quintuplet $(\bphi, \bmu, \bsig, \v, p)$ such that for all $s\in [0,1)$,
\begin{subequations}
\label{CD:WS*}
\begin{alignat}{2} 
      \notag
    \bphi_n & \to \bphi 
    &&\quad\text{weakly-$^*$ in $ L^\infty(0,T;\HH^1)$,} \notag\\
    &&&\qquad \text{weakly in $ \W{1,\frac 85} {\Vp} 
        \cap L^2(0,T;\HH^3)$, a.e.~in $Q$,}
    \notag\\       \notag
    &&&\qquad\text{and strongly in $C([0,T];\HH^s) \cap L^2 (0,T;\HH^{2+s})$,}
    \\
      \notag    \bphi_n\vert_\Gamma &\to \bphi\vert_\Gamma
    &&\quad\text{strongly in $C([0,T];\LL^2_\Gamma)$,
        and a.e.~on $\Sigma$,}
    \\ 
      \notag    \bsig_n & \to \bsig
    &&\quad\text{weakly-$^*$ in $\L\infty {\LL^2} $, }\notag\\
    &&&\qquad\text{weakly in $ \W{1,1} {({\WW^{1,4}})'} \cap \L2 {\HH^1}$, a.e.~in $Q$,}  
    \notag\\       \notag
    &&&\qquad\text{and strongly in $C([0,T];({\WW^{1,4}})') \cap L^2 (0,T;\HH^s) $,}
    \\
      \notag    \bsig_n\vert_\Gamma &\to \bsig\vert_\Gamma
    &&\quad\text{weakly in $L^4(0,T;\LL^2_\Gamma)$, 
        strongly in $\L2 {\LL^{2}_\Gamma}$,
        and a.e.~on $\Sigma$,}
    \\ 
      \notag    \bmu_n & \to \bmu && \quad \text{weakly in $\L2 {\HH^1}$,}
    \\
      \notag    \v_n & \to \v && \quad \text{weakly in $ \L2 {\LL^2_{\Div}}$,} 
    \\
      \notag    p_n & \to p && \quad \text{weakly in $\L{\frac 43} {L^2}$,} 
    \\ 
      \notag    \Div(\bphi_n\otimes\v_n) & \to \ttau 
    &&  \quad\text{weakly in $\L{\frac 85} { (\HH^1)'} \cap \L{\frac 43} {\LL^{\frac 32}}$},
    \\ 
      \notag    \Div(\bsig_n\otimes\v_n) & \to  \ttheta 
    && \quad \text{weakly in $\L{1} {\LL^1}$},
\end{alignat}
\end{subequations}
as $n\to\infty$, along a nonrelabeled subsequence.

Moreover, the limit $(\bphi, \bmu, \bsig, \v, p)$ is a weak solution to the multiphase Cahn--Hilliard--Darcy system \eqref{MCHB} in the sense of Definition~\ref{DEF:WEAKSOL:DARCY}.
In addition, this solution satisfies 
\begin{align}
    \label{REG:MCHD:ADD}
    \bphi  \in \L2 {\HH^3} 
        \cap C([0,T];\HH^1), 
    \quad 
    \Psi_{\bphi}(\bphi) \in \L{4}{\LL^2}\cap \L{2}{\LL^6}.
\end{align} 

\end{theorem}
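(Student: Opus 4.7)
The strategy is to exploit the fact that the estimate \eqref{EST:CHB} for the Brinkman solutions is uniform in $n$, since $C_B$ depends on the constants in Section~\ref{SECT:ASS} except for $\eta_0$. The plan has four main steps: extract convergent subsequences, kill the viscous stress in the limit, pass to the limit in the nonlinear terms, and finally recover the additional pressure regularity claimed for Darcy.

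\emph{Step 1: Compactness.} From \eqref{EST:CHB} we obtain uniform bounds for $(\bphi_n,\bmu_n,\bsig_n,\v_n,p_n)$ in all of the spaces appearing in \eqref{REG:MCHD} except for the pressure gradient. In particular, testing \eqref{WF:2} and \eqref{WF:4} by $H^1$-functions yields uniform bounds on $\delt\bphi_n$ in $L^{8/5}(0,T;(\HH^1)')$ and on $\delt\bsig_n$ in $L^1(0,T;(\WW^{1,4})')$ (the exponent $8/5$ comes from the $L^{4/3}(\LL^{3/2})$ bound on $\Div(\bphi_n\otimes\v_n)$ together with $\CC\Grad\bmu_n\in L^2(\LL^2)$). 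Combining these with the $L^\infty(\HH^1)$ and $L^2(\HH^1)$ bounds and applying the Aubin--Lions--Simon lemma gives the strong convergences $\bphi_n\to\bphi$ in $C([0,T];\HH^s)\cap L^2(0,T;\HH^{2+s})$ and $\bsig_n\to\bsig$ in $C([0,T];(\WW^{1,4})')\cap L^2(0,T;\HH^s)$ for any $s\in[0,1)$. The boundary convergences follow from continuity of the trace operator; pointwise a.e.~convergences are extracted along a further subsequence. The weak and weak-$^*$ limits in \eqref{CD:WS*} are a consequence of the uniform bounds.

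\emph{Step 2: Vanishing viscosities.} Factorize $2\eta_n(\bphi_n)\D\v_n=2\sqrt{\eta_n(\bphi_n)}\cdot\sqrt{\eta_n(\bphi_n)}\D\v_n$. Since $\|\eta_n\|_{C_b}\to 0$ while $\sqrt{\eta_n(\bphi_n)}\D\v_n$ is bounded in $L^2(\LL^2)$ by \eqref{EST:CHB}, we obtain $\eta_n(\bphi_n)\D\v_n\to\0$ strongly in $L^2(\LL^2)$; the analogous argument handles $\lambda_n(\bphi_n)\Div(\v_n)\I$. Hence, for any $\be\in H^1(\Omega;\R^d)$, the viscous contribution $\intO\T(\bphi_n,\v_n,p_n):\Grad\be\,\dx$ reduces in the limit to $-\intO p\,\Div(\be)\dx$ in $L^{4/3}(0,T)$. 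Choosing first $\be\in C^1_c(\Omega;\R^d)$ and using a density argument yields $-\Grad p+\nu\v=(\Grad\bphi)^\top\bmu+(\Grad\bsig)^\top\Ns(\bphi,\bsig)$ in the distributional sense; the resulting relation furnishes \eqref{WF:D:1} for arbitrary $\be\in L^2$ after integration by parts, provided the boundary trace of $p$ vanishes. To recover $p|_\Sigma=0$, I test with $\be\in H^1$ with $\be\cdot\n\neq 0$ on $\Gamma$, and use that the no-friction condition $\T(\bphi_n,\v_n,p_n)\n=0$ degenerates to $p_n\n=0$ on $\Sigma$ in the limit.

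\emph{Step 3: Limits in the nonlinear terms.} The strong convergence of $\bphi_n$ in $C([0,T];\HH^s)$ with $s$ close to $1$ (embedded into $L^\infty(\LL^q)$ for any $q<\infty$) combined with weak convergence of $\bmu_n$ in $L^2(\HH^1)\hookrightarrow L^2(\LL^6)$ handles $(\Grad\bphi_n)^\top\bmu_n$; the strong convergence of $\bsig_n$ in $L^2(\HH^s)$ together with the growth bound \eqref{EST:DN} handles $(\Grad\bsig_n)^\top\Ns(\bphi_n,\bsig_n)$. For the convective terms, $\bphi_n\otimes\v_n\to\bphi\otimes\v$ and $\bsig_n\otimes\v_n\to\bsig\otimes\v$ in the distribution sense by combining strong $L^p$ convergence of the first factor with weak $L^2$ convergence of $\v_n$, so applying $\Div$ identifies $\ttau=\Div(\bphi\otimes\v)$ and $\ttheta=\Div(\bsig\otimes\v)$. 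The nonlinear potential term $\Psi_\bphi(\bphi_n)$ converges pointwise a.e., and the $L^4(\LL^2)\cap L^2(\LL^6)$ bound in \eqref{EST:CHB} provides uniform integrability, so Vitali's theorem yields the required convergence. The source terms $\Sp,\Ss,\SG$ are handled analogously using \ref{ass:sources}--\ref{ass:source:boundary} together with the strong convergences; the constraint $\Div(\v)=\Sv(\bphi,\bsig)$ passes directly to the limit from the a.e.~convergences. Initial data pass via the weak-in-time convergences and the $C([0,T];\HH^s)$, $C([0,T];(\WW^{1,4})')$ continuities.

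\emph{Step 4: Pressure regularity.} From the pointwise identity obtained in Step~2, $\Grad p=(\Grad\bphi)^\top\bmu+(\Grad\bsig)^\top\Ns(\bphi,\bsig)-\nu\v$; the right-hand side belongs to $L^1(0,T;\LL^{3/2})$ thanks to the regularities collected in \eqref{REG:MCHD} and \eqref{REG:MCHD:ADD} (in particular $\bphi\in L^2(\HH^3)\hookrightarrow L^2(\WW^{1,\infty})$ and $\bmu\in L^2(\LL^6)$), so $p\in L^1(0,T;W^{1,3/2})$; the Dirichlet condition $p|_\Sigma=0$ established in Step~2 provides $p\in L^1(0,T;W^{1,3/2}_0)$. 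Finally, \eqref{REG:MCHD:ADD} is inherited directly from the uniform bounds of Theorem~\ref{THM:EXISTENCE:WEAK} via lower semicontinuity.

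The principal difficulty I expect is the recovery of the Dirichlet boundary condition for the pressure in the Darcy limit: because the bound on $\v_n$ degenerates from $H^1$ to merely $L^2_\Div$, the usual integration-by-parts argument that encodes the no-friction condition must be done carefully on the level of $\T(\bphi_n,\v_n,p_n)\n$, keeping track of which boundary contributions survive after multiplication by $\sqrt{\eta_n}$ and passing to the limit.
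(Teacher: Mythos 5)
Your proof follows the same overall architecture as the paper's: extract subsequences via the $\eta_0$-independent uniform bound \eqref{EST:CHB} and Aubin--Lions--Simon, kill the viscous stress by writing $2\eta_n(\bphi_n)\D\v_n = 2\sqrt{\eta_n(\bphi_n)}\cdot\sqrt{\eta_n(\bphi_n)}\D\v_n$ with $\sqrt{\eta_n(\bphi_n)}\D\v_n$ bounded in $L^2(\LL^2)$ and $\|\eta_n\|_{C_b}\to 0$, identify the convective limits by the weak--strong principle after integration by parts, and pass to the limit in the lower-order nonlinearities using a.e.\ convergence plus uniform integrability. Steps 1, 3, and 4 are sound, and the final paragraph shows you correctly located where the delicate point is.

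However, the mechanism you propose in Step 2 for recovering $p\vert_\Sigma = 0$ does not work. You claim that the no-friction condition $\T(\bphi_n,\v_n,p_n)\n = 0$ ``degenerates to $p_n\n = 0$ on $\Sigma$ in the limit.'' There is no basis for this: the only uniform control on $\D\v_n$ is $\|\sqrt{\eta_n(\bphi_n)}\,\D\v_n\|_{L^2(\LL^2)}\le C$, so $\D\v_n$ itself (and a fortiori its boundary trace) blows up like $\eta_{0,n}^{-1/2}$, and nothing prevents $2\eta_n(\bphi_n)\D\v_n\n\vert_\Sigma$ from contributing a nonvanishing term in the boundary condition. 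Moreover, $p_n$ is only bounded in $L^{4/3}(0,T;L^2)$, which has no trace on $\Gamma$, so there is no topology in which $p_n\vert_\Sigma$ converges. The paper sidesteps the boundary behaviour of the approximations entirely. The weak formulation \eqref{WF:1} has no boundary term because the no-friction condition was already used to eliminate it, so one may pass to the limit in the weak formulation itself; this produces \eqref{DAR:WF:LIM:1}, valid for all $\be\in\HH^1$, with the term $-\intO p\,\Div(\be)\dx$. Testing first with $\be_0\in C^\infty_c(\Omega;\R^d)$ yields the distributional identity $\Grad p = \nu\v - (\Grad\bphi)^\top\bmu - (\Grad\bsig)^\top\Ns(\bphi,\bsig) \in L^1(0,T;\LL^{3/2})$, so that $p(t)\in W^{1,3/2}(\Omega)$ has a trace a.e.\ in time. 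Substituting this pointwise identity back into the limit equation for arbitrary $\be\in\HH^1$ and integrating by parts produces $\intG p(t)\,\be\cdot\n\dS = 0$; choosing $\be = -q\n$ with $q\in C^1_b(\Gamma)$ (extended to a vector field in $\HH^1$) then forces $p\vert_\Sigma = 0$. In short: the Dirichlet condition for $p$ must be extracted from the variational structure of the \emph{limit} problem, not inherited as a limit of the Brinkman boundary condition.

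Two minor points. First, your remark that ``the constraint $\Div(\v)=\Sv(\bphi,\bsig)$ passes directly to the limit from the a.e.~convergences'' is imprecise: $\Div(\v_n)\to\Div(\v)$ only weakly in $L^2(0,T;L^2)$ (from the $\LL^2_{\Div}$ bound), not a.e.; it is $\Sv(\bphi_n,\bsig_n)$ that converges a.e., and the identity passes by combining weak convergence on one side with bounded a.e.\ convergence (hence strong $L^2$ convergence) on the other. Second, your phrasing ``the resulting relation furnishes \eqref{WF:D:1} for arbitrary $\be\in L^2$ after integration by parts, provided the boundary trace of $p$ vanishes'' conflates two independent claims: once the distributional identity $\Grad p = \dots$ is established, \eqref{WF:D:1} holds for all $\be\in\LL^2$ without any integration by parts or boundary condition; the vanishing trace is a separate regularity assertion ($p\in L^1(0,T;W^{1,3/2}_0)$) required by \eqref{REG:MCHD} but not by \eqref{WF:D:1}.
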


\medskip

\textbf{Comment.} We point out that for any $n\in\mathbb N$, the choice of the corresponding weak solution to (MCHB) is \emph{explicit}, since we are choosing exactly the corresponding weak solution that was constructed in Theorem~\ref{THM:EXISTENCE:WEAK}. This means that even though the weak solutions to (MCHB) might not be unique, we do not require the axiom of choice for our approach.

The proof of Theorem~\ref{THM:EXISTENCE:WEAK:DARCY} is presented in Section~\ref{SECT:EX:MCHD}.

\section{Existence of weak solutions to the (MCHB) system}
\label{SECT:EX:MCHB}
This section is devoted to the construction of a weak solution to the multiphase Cahn--Hilliard--Brinkman system (MCHB)
in the sense of Definition \ref{DEF:WEAKSOL}.

\begin{proof}[Proof of Theorem \ref{THM:EXISTENCE:WEAK}] 
To construct a weak solution we discretize the weak formulation \eqref{WF:1}--\eqref{WF:4} via a Faedo--Galerkin scheme. Then, we derive suitable a priori estimates for the discrete approximate solutions that are independent of the dimension of the finite-dimensional subspace. 
This allows us to show that the sequence of approximate solutions converges to a weak solution of the Brinkman system \eqref{MCHB} in the sense specified by Definition~\ref{DEF:WEAKSOL}. 

In the whole proof, the letter $C$ denotes a generic positive constant that may depend only on the initial data and the constants introduced in Section~\ref{SECT:ASS} (including the final time $T$), except for $\eta_0$ as this constant will play a role in the ``Darcy limit'' (see Theorem~\ref{THM:EXISTENCE:WEAK:DARCY}). The proof is split into several steps.

\noindent
\textbf{Step 1: The Faedo--Galerkin scheme.} 
The idea of our Faedo--Galerkin scheme is to spatially approximate the functions $\bphi$, $\bmu$ and $\bsig$ by functions from suitable finite-dimensional subspaces.

Therefore, we consider the scalar eigenvalue problem
for the Laplace operator with homogeneous Neumann boundary conditions:
\begin{align}
\label{PNEVP}
    -\Lap w = \lambda w \;\;\text{in $\Omega$}, \quad
    \deln w = 0 \;\;\text{on $\Gamma$}.
\end{align}
It is well known, that there exists a sequence $\{(\lambda_i,w_i)\}_{i\in \mathbb{N}}$ of eigenvalues $\lambda_i$ and corresponding eigenfunctions $w_i$.
We further know that all eigenvalues are nonnegative, 
and can be sorted such that they form a nondecreasing sequence $\{\lambda_i\}_{i\in\mathbb N}$ with $\lambda_i \to \infty $ as $i\to \infty$. 
Furthermore, the eigenfunctions can be chosen such that $\norm{w_i}_{L^2} = 1$ for all $i\in\mathbb N$. In particular, for the first eigenfunction, we choose $w_1 = \abs{\Omega}^{-1/2}$. 
As the domain $\Omega$ is smooth, elliptic regularity theory implies that $w_i\in C^\infty(\overline\Omega)$ for all $i\in\mathbb N$. Moreover, the eigenfunctions are orthogonal with respect to the inner product of $L^2(\Omega)$ and thus, they form an orthonormal Schauder basis of $L^2(\Omega)$. 
In addition, the family $\{w_i\}_{i\in\mathbb N}$ is also a Schauder basis of $H^2_\n(\Omega)$.

We now define
\begin{alignat*}{2}
    \w_{(i-1)k+j} &:= w_j \mathbf{e}_i
    &&\quad\text{for all $i\in\{1,...,L\},\,
        j\in\mathbb N$,}\\
    \z_{(i-1)k+j} &:= w_j \mathbf{e}_i
    &&\quad\text{for all $i\in\{1,...,M\},\,
        j\in\mathbb N$.}
\end{alignat*}
where $\mathbf{e}_i$ stands for the $i$-th unit vector in $\R^L$ or $\R^M$, respectively.
It is straightforward to check that the family $\{\w_m\}_{m\in\mathbb N}$ is an orthonormal Schauder basis of $L^2(\Omega;\R^L)$, and also a Schauder basis of $H^2_\n(\Omega;\R^L)$.
Similarly, the family $\{\z_m\}_{m\in\mathbb N}$ is an orthonormal Schauder basis of $L^2(\Omega;\R^M)$, and also a Schauder basis of $H^2_\n(\Omega;\R^M)$.
For any $k\in\mathbb N$, we introduce the finite dimensional subspaces
\begin{align*}
    \CWK
    &:=\mathrm{span}\,\big\{ \w_{(i-1)k+j} \big\}_{i=1,...,L,\, j=1,...,k}
    =\mathrm{span}\,\big\{ \w_1,...,\w_{kL} \big\}
    \subset H^1(\Omega;\R^L), 
    \\
    \CZK
    &:= \mathrm{span}\,\big\{ \z_{(i-1)k+j} \big\}_{i=1,...,M,\, j=1,...,k}
    = \mathrm{span}\,\big\{ \z_1,...,\z_{kM} \big\}
    \subset H^1(\Omega;\R^M), 
\end{align*}
and we write $\mathbb P_{\CWK}$ and $\mathbb P_{\CZK}$ to denote the $\LL^2$-orthogonal projection onto $\CWK$ and $\CZK$, respectively.

We now make the ansatz
\begin{align}
    \label{GAL:ANSATZ}
    \bphi_k(x,t) := \sum_{i=1}^{kL} a_i^k(t) \w_i(x), \quad
    \bmu_k(x,t) := \sum_{i=1}^{kL} b_i^k(t) \w_i(x), \quad
    \bsig_k(x,t) := \sum_{i=1}^{kM} c_i^k(t) \z_i(x),
\end{align}
where the coefficients $a_i^k$, $b_i^k$, $i\in\{1,...,kL\}$, and $c_i^k$, $i\in\{1,...,kM\}$, are assumed to be continuously differentiable functions that are still to be determined.

At every time $t\in[0,T]$ in which the expressions in \eqref{GAL:ANSATZ} are declared, we further introduce the functions $\big(\v_k(t),p_k(t)\big)\in \HH^2\times H^1$ as the unique strong solution of the system
\begin{subequations}
\label{AT}
\begin{alignat}{2}
    \label{AT:1}
    -\Div\big(\T(\bphi_k(t),\v_k(t),p_k(t))\big) + \nu\v_k(t)
    & = \big(\Grad\bphi_k(t)\big)^\top \bmu_k(t) + \big(\Grad\bsig_k(t)\big)^\top \Ns\big(\bphi_k(t),\bsig_k(t) \big)
    &&\quad\text{in $\Omega$,}\\
    \label{AT:2}
    \Div\big(\v_k(t)\big) 
    &= \Sv\big(\bphi_k(t),\bsig_k(t)\big)
    &&\quad\text{in $\Omega$,}\\
    \label{AT:3}
    \T\big(\bphi_k(t),\v_k(t),p_k(t)\big)\n 
    &= \mathbf 0
    &&\quad\text{on $\Gamma$}.
\end{alignat}
\end{subequations}
As the right-hand sides of \eqref{AT:1} and \eqref{AT:2} belong to $L^2(\Omega;\R^d)$ and $H^1(\Omega)$, respectively, the existence and the uniqueness of the solution $\big(\v_k(t),p_k(t)\big)\in \HH^2\times H^1$ follows from a fundamental result on Stokes operators with variable viscosity established in \cite{AbelsTerasawa} that can also be found in \cite[Lem.~1.5]{EbenbeckGarcke}. 

Moreover, we use \eqref{AT:2} and the chain rule to derive the identities
\begin{alignat}{2}
	\label{DEC:divphi}
    \Div\big(\bphi_k(t)\otimes\v_k(t)\big) 
    &= \nabla \bphi_k(t) \; \v_k(t) + \bphi_k(t) \;\Sv\big(\bphi_k(t),\bsig_k(t)\big)
    &&\quad\text{in $\Omega$,}
    \\
    \label{DEC:divsigma}
    \Div\big(\bsig_k(t)\otimes\v_k(t)\big) 
    &= \nabla \bsig_k(t) \; \v_k(t) + \bsig_k(t) \;\Sv\big(\bphi_k(t),\bsig_k(t)\big) 
    &&\quad\text{in $\Omega$,}
\end{alignat}
for all $t\in[0,T]$ in which the expressions in \eqref{GAL:ANSATZ} are declared.

The next goal is to determine the continuously differentiable coefficients  $a_i^k$, $b_i^k$, $i\in\{1,...,kL\}$, and $c_i^k$, $i\in\{1,...,kM\}$, such that the discretized weak formulation
\begin{subequations}
\label{DISC:WF}
\begin{align}
    \label{DISC:WF:1}
	&\intO  \T(\bphi_k,\v_k, p_k) : \nabla \be + \nu  \v_k \cdot \be  \dx
	=
    \intO (\nabla \bphi_k)^\top  \bmu_k \cdot \be + (\nabla \bsig_k)^\top  \Ns(\bphi_k,\bsig_k) \cdot \be \dx,
	\\ &
	\label{DISC:WF:2}
	\ang{\delt \bphi_k}{\bz}_{\HH^1}  + \intO \Div(\bphi_k\otimes\v_k) \cdot \bz \dx
	= - \intO \CC(\bphi_k,\bsig_k) \nabla \bmu_k : \nabla \bz + \Sp(\bphi_k,\bsig_k,\bmu_k) \cdot \bz
	\dx,
	\\ & 
	\label{DISC:WF:3}
	\intO \bmu_k \cdot \bt \dx = \intO \gamma\eps \nabla \bphi_k : \nabla \bt + \gamma\eps^{-1} \Psi_{\bphi}(\bphi_k) \cdot \bt
	+ \Np(\bphi_k,\bsig_k) \cdot \bt \dx,
	\\
	& \notag
	\ang{\delt \bsig_k}{\bx}_{\HH^1}  	
	+ \intO \Div(\bsig_k\otimes\v_k) \cdot \bx \dx
	= - \intO \DD(\bphi_k,\bsig_k) \nabla \Ns(\bphi_k,\bsig_k): \Grad \bx \dx
	\\ & \qquad 
	\label{DISC:WF:4}
	- \intO \Ss(\bphi_k,\bsig_k,\bmu_k) \cdot \bx \dx
	+ \intG  \SG(\bphi_k,\bsig_k)\cdot \bx \dS,
\end{align}
\end{subequations}
is satisfied for all test functions $\be\in H^1(\Omega;\R^d)$, $\bz,\bt\in\CWK$, and $\bx\in\CZK$. Note that we only need to detect  $a_i^k$, $b_i^k$, $i\in\{1,...,kL\}$ and $c_i^k$, $i\in\{1,...,kM\}$, such that the equations \eqref{DISC:WF:2}--\eqref{DISC:WF:4} are fulfilled. Then \eqref{DISC:WF:1} holds automatically due to the construction of $\v_k$ and $p_k$. Of course, also the initial conditions have to be approximated. We thus demand that
\begin{align}
    \label{DISC:INI}
    \bphi_k(0) = \mathbb P_{\CWK}(\bphi_0)\in\CWK,
    \quad
    \bsig_k(0) = \mathbb P_{\CZK}(\bsig_0)\in\CZK
    \quad
    \text{in $\Omega$.}
\end{align}

In the following, we write  $\mathbf{a}^k := (a_1^k,...,a_{kL}^k)$, $\mathbf{b}^k := (b_1^k,...,b_{kL}^k)$, and $\mathbf{c}^k := (c_1^k,...,c_{kM}^k)$ to denote the coefficient vectors.
Plugging the ansatz \eqref{GAL:ANSATZ} into the discrete formulations \eqref{DISC:WF:2}--\eqref{DISC:WF:4}, and testing with  $\w_1,...,\w_{kL}$ and $\z_1,...,\z_{kM}$, respectively, we conclude that the vector $(\mathbf a^k,\mathbf b^k, \mathbf c^k)^\top$ needs to satisfy a system of  $k(L+M)$ nonlinear ordinary differential equations and  $kL$ algebraic equations. By means of the vector-valued algebraic equation resulting from \eqref{DISC:WF:3}, we can replace the variable $\mathbf b^k$ appearing in the right-hand side of the vector-valued ODEs resulting from \eqref{DISC:WF:2} and \eqref{DISC:WF:4} by an expression depending only on $\mathbf a^k$ and $\mathbf c^k$ to eventually obtain a closed system of ODEs for $(\mathbf a^k,\mathbf c^k)^\top$. 
In fact, notice that from \eqref{DISC:INI} we naturally obtain the initial conditions 
\begin{alignat}{2}
    \label{ODE:INI}
    [\mathbf a^k]_i(0) &= a_i^k(0) 
    = \inn{\mathbb \bphi_k(0)}{\w_i}_{\LL^2}
    = \inn{\mathbb \bphi_0}{\w_i}_{\LL^2}, 
    &&\quad i\in\{1,...,kL\},\\
    \quad
    [\mathbf c^k]_i(0) &= c_i^k(0) 
    = \inn{\mathbb \bsig_k(0)}{\z_i}_{\LL^2}
    = \inn{\mathbb \bsig_0}{\z_i}_{\LL^2},
    &&\quad i\in\{1,...,kM\}.
\end{alignat}
In particular, this entails that 
\begin{align*}
    \norm{\bphi_k(0)}_{\HH^1}
    &=
    \norm{\sum_{i=1}^{ kL} [\mathbf a^k]_i(0)\, \w_i}_{\HH^1} 
    \! \! \!  
    \le \norm{\bphi_0}_{\HH^1},
    \quad
    \norm{\bsig_k(0)}_{\LL^2}
    =
    \norm{\sum_{i=1}^{ kM } [\mathbf c^k]_i(0)\, \z_i}_{\LL^2}  
    \! \! \! 
    \le \norm{\bsig_0}_{\LL^2}.
\end{align*} 
Recalling the assumptions \ref{ass:tensors}--\ref{ass:pot}, we notice that the right-hand side of the ODE system depends continuously on the unknown variables $(\mathbf a^k,\mathbf c^k)^\top$. Hence, the Cauchy--Peano theorem implies the existence of at least one local solution 
$(\mathbf a^k,\mathbf c^k)^\top:[0,T_k^*)\cap [0,T] \to  \R^{k(L+M)}$ with $T_k^* >0$. Without loss of generality, we assume that $T_k^*\le T$ and that $(\mathbf a^k,\mathbf c^k)^\top$ is the right-maximal solution of the ODE system mentioned above, that is, $T_k^*$ is chosen as large as possible. 

We can now reconstruct $\mathbf b^k$ by means of the vector-valued algebraic equation as a function $(\mathbf b^k)^\top:[0,T_k^*) \to  \R^{kL}$. Consequently, by \eqref{GAL:ANSATZ} and the construction of $(\v_k,p_k)$, we obtain functions
\begin{align}
\begin{aligned}
    \label{GAL:FUNC}
    &\bphi_k,\bmu_k \in C^1\big([0,T_k^*);C^\infty(\overline\Omega;\R^L)\big),
    \quad
    &&\bsig_k \in C^1\big([0,T_k^*);C^\infty(\overline\Omega;\R^M)\big), 
    \\
    &\v_k \in C^1\big([0,T_k^*);H^2(\Omega;\R^d)\big),
    \quad
    &&p_k \in C^1\big([0,T_k^*);H^1(\Omega)\big),
\end{aligned}
\end{align}
which satisfy the discretized weak formulation \eqref{DISC:WF} on the time interval $[0,T_k^*)$. 

In Step~3, we will see that the solution $(\mathbf a^k,\mathbf c^k)^\top$ of the ODE system mentioned above can actually be extended onto the whole time interval $[0,T]$. 
Then the functions $\bphi_k$, $\bmu_k$, $\bsig_k$, $\v_k$, and $p_k$ given by \eqref{GAL:FUNC} satisfy the discretized weak formulation \eqref{DISC:WF} not only on $[0,T_k^*)$ but on the whole time interval $[0,T]$.

\noindent
\textbf{Step 2: A priori estimates.}
Now, we intend to establish a priori estimates to bound our approximate solution $(\bphi_k,\bmu_k,\bsig_k,\v_k,p_k)$ uniformly with respect to the index $k$ in suitable norms. We claim that
there exist constants $C_\text{AP},C_{AP}'>0$ such that, for all $k\in\mathbb{N}$ and all $T_k<T_k^*$,
\begin{align}
    \label{EST:AP:1}
    &\begin{aligned}
    &\norm{\bphi_k}_{\Lk\infty{\HH^1} \cap \Lk2{\HH^3}}
    \notag\\
   &\qquad+\norm{\bsig_k}_{\Lk\infty{\LL^2} \cap \Lk2{\HH^1}}
    +\norm{\bsig_k}_{\Lk4{\LL^2_\Gamma}}
    \notag\\
    &\qquad+\norm{\bmu_k}_{\Lk2{\HH^1}}
    +\norm{\v_k}_{\Lk2{\LL^2}}
    + \bignorm{\sqrt{\eta(\bphi_k)}\,\D\v_k}_{L^2(0,T_k;\LL^2)}^2
    +\norm{p_k}_{\Lk{4/3}{L^2}}
    \notag\\
    &\qquad
    + \norm{\Psi_{\bphi}(\bphi_k)}_{ \Lk{4} {\LL^{2}}\cap\Lk{2} {\LL^{6}} }
    + \norm{\nabla \Ns(\bphi_k,\bsig_k)}_{ \Lk{2} {\LL^{2}}} 
    \notag\\
    &\qquad 
    +\norm{\Div(\bphi_k\otimes\v_k)}_{\Lk{4/3}{\LL^{3/2}}}
    +\norm{\Div(\bsig_k\otimes\v_k)}_{\Lk{1}{\LL^{1}}}
    \end{aligned}
    \notag\\
    &\quad \le C_\text{AP},
\end{align}
and
\begin{align}
    \label{EST:AP:2}
    & \norm{\bphi_k}_{{\Hk1{(\HH^1)'}}}
    + \norm{\bsig_k}_{\Wk{1,4/3}{(\HH^1)'}}
    + \norm{\v_k}_{\Lk2{\HH^1}}
    \notag\\
    &\qquad 
    +\norm{\Div(\bphi_k\otimes\v_k)}_{\Lk{2}{\LL^{3/2}}}
    +\norm{\Div(\bsig_k\otimes\v_k)}_{\Lk{1}{\LL^{3/2}} }
    \le C_{AP}'\big( 1 + \eta_0^{-1}\big).
\end{align}
We point out that the constants $C_{AP}$ and $C_{AP}'$ depend only on the initial data and the constants introduced in Section~\ref{SECT:ASS} except for $\eta_0$.
In particular, $C_{AP}$ and $C_{AP}'$ are thus independent of $k$ and $T_k$.

In the following proof of these estimates, we omit the subscript $k$ to provide a cleaner presentation. In particular, with some abuse of notation, we will also just write $T$ instead of $T_k$. 

\noindent
{\it Step 2.1: Energy estimate.}
To handle both cases \ref{ass:pot:1} and \ref{ass:pot:2} simultaneously, we introduce constants $\alpha$ and $\beta$ in the following way:
\begin{align}
\label{DEF:AB}
    \left\{
    \begin{aligned}
        & \alpha := 1, 
        && \beta := \theta_0
        && \text{if \ref{ass:pot:1} holds,}\\
        & \alpha := 0, 
        && \beta := 1
        && \text{if \ref{ass:pot:2} holds.}
    \end{aligned}
    \right.
\end{align}

For every $t\in[0,T]$, we choose $\bu(t)$ as a strong solution to the following problem
\begin{align*}
		\begin{cases}
		\Div \big(\bu(t)\big) = \Sv\big(\bphi(t),\bsig(t)\big) \quad  &\hbox{in $\Omega,$}
		\\ 
		\bu(t) = \frac 1 {|\Gamma|} \Big( \intO \Sv\big(\bphi(t),\bsig(t)\big) \dx \Big) {\bf n} = :{\bf r} \quad & \hbox{on $\Gamma,$ }
		\end{cases}
\end{align*}
whose solvability is a direct consequence of Lemma~\ref{LEM:diveq} with $f = \Sv(\bphi(t),\bsig(t))$.
This lemma further implies $\bu \in C^1([0,T];\WW^{1,q})$ for all $q \in (1,\infty)$ as well as the estimate
\begin{align}
    \label{normu}
	\norm{\bu(t)}_{\WW^{1,q}} \leq C\norm{\Sv\big(\bphi(t),\bsig(t)\big)}_{L^q} \le C.
\end{align}
Notice that condition \eqref{cond} in Lemma~\ref{LEM:diveq} is fulfilled as it holds that
\begin{align*}
	\intG {\bf r} \cdot {\bf n} \dS = \frac 1 {|\Gamma|}\Big( \intO \Sv\big(\bphi(t),\bsig(t)\big) \dx \Big)\intG {\bf n}\cdot {\bf n} \dS =  \intO \Sv\big(\bphi(t),\bsig(t)\big) \dx.
\end{align*}

We now recall from Assumption~\ref{ass:nutrient} that
\begin{align*}
    \Ns(\bphi,\bsig) = \chi_{\bsig} \bsig 
    -  \mathbb{B} \bphi - \mathbf{b}
    \quad\text{a.e. in $\Omega\times (0,T)$.}
\end{align*}
Since $\bphi(t)\in\CWK$ for all $t\in [0,T]$, it is straightforward to check that $\mathbb{B} \bphi(t) \in \CZK$ for all $t\in [0,T]$. Consequently, it holds that $\Ns(\bphi(t),\bsig(t)) \in \CZK$ for all $t\in [0,T]$.

Testing \eqref{DISC:WF:1} by $\v-\bu$, \eqref{DISC:WF:2} by $\bmu$, \eqref{DISC:WF:3} by $-\delt \bphi$, \eqref{DISC:WF:4} by $\Ns(\bphi,\bsig)$, adding the resulting equalities, and using the decompositions \eqref{DEC:SP}, \eqref{DEC:divphi} and \eqref{DEC:divsigma}, we infer the discrete energy identity
\begin{align}
	& \notag
	\ddt E(\bphi,\bsig)
	+ \intO 2 \eta(\bphi)|\D \v|^2+\nu|\v|^2 \dx
	\\ &\notag \;\; 
	+ \intO \CC(\bphi,\bsig) \Grad \bmu : \Grad \bmu 
	    +\DD(\bphi,\bsig) \Grad \N_{\bsig}(\bphi,\bsig) : \Grad \N_{\bsig}(\bphi,\bsig) \dx 
	 +  \intG  K \chi_{\bsig} |\bsig|^2 \dS
	\\ & \notag
	= \intO  \Lam_{\bphi}(\bphi,\bsig) \cdot \bmu - \Th_{\bphi}(\bphi,\bsig) \bmu \cdot \bmu
	    - \S_{\bsig}(\bphi,\bsig,\bmu) \cdot \N_{\bsig}(\bphi,\bsig) \dx
	\\ & \notag \;\;
	- \intO [(\nabla \bphi) \bu  + \bphi \Sv(\bphi,\bsig)] \cdot \bmu
	+ [(\nabla \bsig) \bu + \bsig \Sv(\bphi,\bsig)] \cdot \N_{\bsig}(\bphi,\bsig) \dx
	\\ & \;\;
	+ \intG  K \big(\Lam_\Gamma(\bphi,\bsig) \cdot \N_{\bsig}(\bphi,\bsig) + \bsig \cdot \G_{\bsig}(\bphi,\bsig)\big)\dS
	+ \intO 2 \eta(\bphi)\D \v : \D \bu + \nu \v \cdot \bu \dx
	\label{eq:energy}
\end{align}
on $[0,T]$, where the energy $E$ is given by \eqref{DEF:EN}.
We point out that $\Div (\v- \bu)=0$ is essential in the derivation of \eqref{eq:energy}. 

Using the parameter $\alpha$ introduced in \eqref{DEF:AB} and recalling the condition \eqref{COND:POSDEF} in \ref{ass:pot:1}, we derive the estimate
\begin{align}
	& \notag
	\ddt E(\bphi,\bsig)
	+ \intO 2 \eta(\bphi)|\D \v|^2+\nu|\v|^2 \dx
	+ \intO \alpha\, \theta_0 \abs{\bmu}^2 \dx
	\\ &\notag \;\; 
	+ \intO \CC(\bphi,\bsig) \Grad \bmu : \Grad \bmu 
	    +\DD(\bphi,\bsig) \Grad \N_{\bsig}(\bphi,\bsig) : \Grad \N_{\bsig}(\bphi,\bsig) \dx 
	 +  \intG  K \chi_{\bsig} |\bsig|^2 \dS
	\\ & \notag
	\le  \left| \intO  \Lam_{\bphi}(\bphi,\bsig) \cdot \bmu \dx \right| 
	+ (1-\alpha) \left| \intO \Th_{\bphi}(\bphi,\bsig) \bmu \cdot \bmu \dx \right|
	+ \left| \intO \S_{\bsig}(\bphi,\bsig,\bmu) \cdot \N_{\bsig}(\bphi,\bsig) \dx \right|
	\\ & \notag \;\;
	+ \left| \intO [(\nabla \bphi) \bu  + \bphi \Sv(\bphi,\bsig)] \cdot \bmu \dx \right|
	+ \left| \intO [(\nabla \bsig) \bu + \bsig \Sv(\bphi,\bsig)] \cdot \N_{\bsig}(\bphi,\bsig) \dx \right|
	\\ & \;\;
	+ \left|\intG  K \big(\Lam_\Gamma(\bphi,\bsig) \cdot \N_{\bsig}(\bphi,\bsig) + \bsig \cdot \G_{\bsig}(\bphi,\bsig)\big)\dS \right|
	+ \left|\intO 2 \eta(\bphi)\D \v : \D \bu + \nu \v \cdot \bu \dx \right|.
	\label{est:energy}
\end{align}
Using Young's and \Hol's inequalities, and \eqref{normu} with $q=2$, we infer that
\begin{align}
\label{EST:S1:1}
	&\left| \intO 2 \eta(\bphi)\D \v : \D \bu + \nu \v \cdot \bu \dx \right|
	\leq
	\bignorm{\sqrt{\eta(\bphi)}\, \D \v}_{\LL^2}^2 + \frac{\nu}{2}\norm{\v}_{\LL^2}^2  + C
	\quad\text{on $[0,T]$.}
\end{align}
To estimate the boundary integral, we employ the bounds on $\G_{\bsig}$ and $\N_{\bsig}$ demanded in \ref{ass:nutrient}, as well as the trace theorem to infer that
\begin{align}
\label{EST:S1:2}
	\left| \intG  K \big(\Lam_\Gamma(\bphi,\bsig) \cdot \N_{\bsig}(\bphi,\bsig) + \bsig \cdot \G_{\bsig}(\bphi,\bsig) \big)\dS \right| 
    \leq \frac{K\chi_{\bsig}}{2}\norm{\bsig}^2_{\LL^2_\Gamma} + C ( 1 + \norm{\bphi}^2_{\HH^1}
    )
\end{align}
on $[0,T]$.
Having this bound at our disposal, we next estimate the integrals on the \rhs\ of \eqref{est:energy} that depend on $\bmu$.
Recalling the decomposition \eqref{DEC:SP} for $\Sp$ presented in \ref{ass:sources}, we
use \Hol's and Young's inequalities along with \eqref{EST:LATH} to infer that
\begin{align}
    \label{EST:S1:3}
    \notag
    &
    \left|\intO \Lam_{\bphi}(\bphi,\bsig)  \cdot \bmu \dx \right|
    + (1-\alpha) \left| \intO \Th_{\bphi}(\bphi,\bsig) \bmu \cdot \bmu \dx \right|
    \\ 
    & \quad 
    \leq
    \tfrac 16 \beta \norm{\bmu}_{\LL^2}^2
    +  C(\norm{  \bphi}_{\LL^2}^2+\norm{  \bsig}_{\LL^2}^2+1). 
\end{align}
Proceeding similarly and using the decomposition \eqref{DEC:SS} from \ref{ass:sources} as well as the estimates \eqref{EST:DN} and \eqref{EST:SPS}, we further deduce that
\begin{align}
    \label{EST:S1:4}
    & \left|\intO \S_{\bsig}(\bphi,\bsig,\bmu) \cdot \N_{\bsig}(\bphi,\bsig) \dx \right| 
    \leq
    \tfrac 16 \beta  \norm{\bmu}_{\LL^2}^2
    + C (\norm{  \bphi}_{\LL^2}^2+\norm{\bsig}_{\LL^2}^2+1)
    \quad\text{on $[0,T]$.}
\end{align}
Since \eqref{normu} holds for $q=d+1$, and $W^{1,d+1}(\Omega; \R^d)$ is continuously embedded in $C_b(\Omega; \R^d)$, we know that
\begin{align}
\label{EST:S1:U}
    \bu \in C^1\big([0,T]; C_b(\Omega; \R^d)\big)
    \quad\text{with}\quad
    \norm{\bu(t)}_{\mathbf C_b} \le C 
    \quad\text{for all $t\in[0,T]$.}
\end{align}
Hence, using \ref{ass:nutrient} and \ref{ass:Sv}, as well as Young's and \Hol's inequalities, we infer that
\begin{align}
    \label{EST:S1:5}
    \left| \intO [(\nabla \bphi) \bu  + \bphi \Sv(\bphi,\bsig)] \cdot \bmu \dx \right|
    \leq 
    \tfrac 16 \beta \norm{\bmu}_{\LL^2}^2
    + C \norm{\bphi}_{\HH^1}^2.
\end{align}

Recalling \ref{ass:nutrient}, we use the chain rule to derive the identity
\begin{align}
\label{dec:nutrient}
    \nabla \bsig 
    = \big(\Nss(\bphi,\bsig)\big)^{-1} \big( \Grad\Ns(\bphi,\bsig) + \Gsp(\bphi,\bsig) \Grad \bphi \big)
    \quad\text{in $\Omega\times (0,T)$.}
\end{align}
According to \eqref{EST:INV:NSS} in \ref{ass:nutrient}, the operator norm of the matrix $(\Nss(\bphi,\bsig))^{-1}$ is bounded by $\chi_{\bsig}^{-1}$. We now use \eqref{EST:DDG} from \ref{ass:nutrient} to bound $\Gsp(\bphi,\bsig)$, which leads to the estimate
\begin{align}
    \label{EST:GRADSIG}
    \norm{\Grad\bsig}_{\LL^2} 
    \le \chi_{\bsig}^{-1}\big(\norm{\Grad\Ns(\bphi,\bsig)}_{\LL^2}
        + D_G \norm{\Grad\bphi}_{\LL^2} \big)
    \quad\text{on $[0,T]$.}
\end{align}
We can thus use \ref{ass:nutrient}, \ref{ass:Sv} and \eqref{EST:S1:U} along with Young's inequality to conclude that
\begin{align}
    \label{EST:S1:6}
    \notag
    &  	\left| \intO [(\nabla \bsig) \bu + \bsig \Sv(\bphi,\bsig)] \cdot \N_{\bsig}(\bphi,\bsig) \dx \right|
    \leq 
    \frac{D_0\chi_{\bsig}^2}{2} \norm{\Grad\bsig}_{\LL^2}^2 
    + C (\norm{\Ns(\bphi,\bsig)}_{\LL^2}^2 
    + \norm{\bsig}_{\LL^2}^2)
    \\
    &\quad\le
    \frac{D_0}{2} \norm{\Grad\Ns}_{\LL^2}^2 
    + C\big( 1 + \norm{\bphi}_{\HH^1}^2 + \norm{\bsig}_{\LL^2}^2 \big)
\end{align}
on the interval $[0,T]$. 

If \ref{ass:pot:2} holds (i.e., $\alpha=0$ and $\beta=1$), we 
still have to derive an estimate for the $\LL^2$-norm of $\bmu$ since it cannot be absorbed by the left-hand side.
We test \eqref{WF:3} with $\bmu$, and we use \eqref{EST:DN} and Young's inequality to infer that
\begin{align}
    \label{est:mu}
    \norm{ \bmu}_{\LL^2}^2
    & \leq 	
    C_0 \norm{ \nabla \bmu}_{\LL^2}^2
    + C \big( 1 + \norm{\bphi}_{\HH^1}^{2} + \norm{ \bsig}_{\LL^2}^2 \big)
    \quad\text{on $[0,T]$ if \ref{ass:pot:2} holds.}
\end{align}
We now combine the inequalities \eqref{EST:S1:1}--\eqref{EST:S1:4}, \eqref{EST:S1:5}, \eqref{EST:S1:6} and \eqref{est:mu} to estimate the right-hand side of the discrete energy identity \eqref{est:energy}. In the resulting inequality, we observe that several terms on the right-hand side can be absorbed by the left-hand side.
Recalling the definition of the energy $E$ and integrating with respect to time, we eventually obtain for all $t\in[0,T]$,
\begin{align}
    \label{EST:ABSORBED}
	& \notag
	\intO  \gamma\eps^{-1} \Psi\big(\bphi(t)\big) 
        + \frac {\gamma \eps}2 |\nabla \bphi(t)|^2  
        + \frac{\chi_{\bsig}}{2}|\bsig(t)|^2 
        - G\big(\bphi(t),\bsig(t)\big)\dx
    \\ &\notag \qquad 
	+ \int_0^t \bignorm{\sqrt{\eta(\bphi(s))}\,\D\v(s)}_{\LL^2}^2 
	    + \frac{\nu}{2} \norm{\v(s)}_{\LL^2}^2 
	    + \frac{K\chi_{\bsig}}{2} \norm{\bsig(s)}_{\LL^2_\Gamma}^2\ds
	\\ &\notag \qquad 
	+ \int_0^t 
	    \frac{\alpha\theta_0}{2}  \norm{\bmu(s)}_{\LL^2}^2
	    + \frac{C_0}{2} \norm{\Grad\bmu(s)}_{\LL^2}^2 
	    + \frac{D_0}{2} \norm{\Grad\Ns\big(\bphi(s),\bsig(s)\big)}_{\LL^2}^2
	  \ds
	\\ & \quad \le 
	CT
	+ C \int_0^t \norm{\bphi(s)}_{\HH^1}^2 
	    + \norm{\bsig(s)}_{\LL^2}^2 
	    \ds.
\end{align}
We next use the inequality \eqref{EST:G} from \ref{ass:nutrient} along with Young's inequality to derive the estimate
\begin{align*}
    \left|\intO G\big(\bphi(t),\bsig(t)\big) \dx\right|
    \le \intO \frac{C_G}{2\delta} \abs{\bphi(t)}^2 + 2C_G\delta\abs{\bsig(t)}^2 + C_G\left(\delta + \frac{1}{4\delta} +1 \right) \dx
\end{align*}
for all $t\in[0,T]$ and all $\delta>0$. Choosing $\delta = \chi_{\bsig} / (8 C_G )$, using the growth condition \eqref{superquad} from \ref{ass:pot}, and recalling \ref{ass:interface}, we infer that
\begin{align*}
    \notag
    \left|\intO G\big(\bphi(t),\bsig(t)\big) \dx\right|
    &\le \intO \frac{4C_G^2}{\chi_{\bsig}} \abs{\bphi(t)}^2 
    + \frac{\chi_{\bsig}}{4} \abs{\bsig(t)}^2 \dx
    + C
    \\
    &\le \intO \frac{\gamma}{2\eps} \Psi\big(\bphi(t)\big) 
    + \frac{\chi_{\bsig}}{4} \abs{\bsig(t)}^2 \dx
    + C
\end{align*}
for all $t\in[0,T]$. Invoking the growth condition \eqref{superquad} once more, we find that for all $t\in[0,T]$,
\begin{align*}
    \notag
    &\min\left\{ \frac{\gamma A_\Psi}{2\eps} , \frac{\chi_{\bsig}}{4}, \frac{\gamma\eps}{2} \right\} 
    \Big( \norm{\bphi(t)}_{\HH^1}^2 + \norm{\bsig(t)}_{\LL^2}^2 \Big) \\
    &\quad
    \le C + \intO  \gamma\eps^{-1} \Psi\big(\bphi(t)\big) 
        + \frac {\gamma \eps}2 |\nabla \bphi(t)|^2  
        + \frac{\chi_{\bsig}}{2}|\bsig(t)|^2 
        - G\big(\bphi(t),\bsig(t)\big)\dx.
\end{align*}
Using this estimate to bound the left-hand side in \eqref{EST:ABSORBED} from below, we finally conclude that
\begin{align}
    \label{EST:GRONWALL}
    \norm{\bphi(t)}_{\HH^1}^2 + \norm{\bsig(t)}_{\LL^2}^2
	\le 
	C
	+ C \int_0^t \norm{\bphi(s)}_{\HH^1}^2 
	    + \norm{\bsig(s)}_{\LL^2}^2 
	    \ds
\end{align}
for all $t\in[0,T]$.
Invoking Gronwall's lemma, we thus obtain the uniform estimate
\begin{align}
    \label{EST:UNI:1}
    \norm{\bphi}_{L^\infty(0,T;\HH^1)}^2 + \norm{\bsig}_{L^\infty(0,T;\LL^2)}^2 \le C.
\end{align}
Using this inequality to bound the right-hand side of \eqref{EST:ABSORBED}, invoking \eqref{EST:GRADSIG}, and additionally using \eqref{est:mu} if \ref{ass:pot:2} holds, we infer that
\begin{gather}
    \label{EST:UNI:2}
    \norm{\v}_{L^2(0,T;\LL^2)}^2 \le C,
    \quad
    \bignorm{\sqrt{\eta(\bphi)}\,\D\v}_{L^2(0,T;\LL^2)}^2
    \le C,
    \\
    \label{EST:UNI:3}
    \norm{\bsig}_{L^2(0,T;\HH^1)}^2
    + \norm{\bsig}_{L^2(0,T;\LL^2_\Gamma)}^2
    + \norm{\bmu}_{L^2(0,T;\HH^1)}^2
    + \norm{\Grad\Ns\big(\bphi,\bsig\big)}_{L^2(0,T;\LL^2)}^2
    \le C.
\end{gather}
Using the lower bound on $\eta$ from \ref{ass:visc} as well as Korn's inequality \eqref{Korn}, we directly conclude that
\begin{align}
    \label{EST:UNI:4}
    \norm{\v}_{L^2(0,T;\HH^1)}^2
    \le C\big( 1 + \eta_0^{-1} \big).
\end{align}
Moreover, applying a trace estimate presented in \cite[Thm.~II.4.1]{Galdi} (with the parameters therein being chosen as $r=q=2, m=1, n=d, \lambda=0$), we infer that
\begin{align}
    \label{interp:L2}
    \norm{\bsig}_{\LL^2_\Gamma} \leq C \big(\norm{\bsig}_{\LL^2} +\norm{\bsig}_{\LL^2}^{1/2}\norm{\bsig}_{\HH^1}^{1/2} \big)
    \quad\text{on $[0,T]$,}
\end{align}
and in combination with \eqref{EST:UNI:2}, this leads to the uniform bound
\begin{align}
    \label{EST:UNI:5}
    \norm{\bsig}_{\L 4{\LL^2_\Gamma}} \leq C .
\end{align}

{\it Step 2.2: An estimate for the pressure.}
We next want to derive a uniform estimate on the pressure $p$. To this end, we rewrite \eqref{WF:1} as
\begin{align}
\label{EQ:PDIVZ}
\begin{aligned}
    \intO  p \, \Div \be \dx
	& =
	\intO \big(2 \eta (\bphi) \D \v + \lambda (\bphi) \Sv(\bphi,\bsig) \I\big) : \nabla \be \dx
    \\ & \quad 
    +\intO \big(\nu  \v  
    -(\nabla \bphi)^\top  \bmu  -  (\nabla \bsig)^\top\Ns(\bphi,\bsig)
    \big)\cdot \be \dx
\end{aligned}
\end{align}
on $[0,T]$ for every $\be \in H^1(\Omega; \R^d)$.
Then, by invoking Lemma \ref{LEM:diveq}, we infer the existence of a function  ${\bf q}\in C([0,T];\HH^1) $  such that for all $t\in[0,T]$, ${\bf q}(t)$ is a strong solution to the system
\begin{align*}
	\begin{cases}
		\Div \big({\bf q}(t)\big) = p(t) \quad  &\hbox{in $\Omega,$}
		\\ 
		{{\bf q}(t)} = \frac {1}{|\Gamma|} \big(\intO p(t) \dx \big) {\bf n}\quad & \hbox{on $\Gamma.$ }
	\end{cases}
\end{align*}
We point out that the complementary condition \eqref{cond} is fulfilled as
\begin{align*}
    \intG {{\bf q}(t)} \cdot {\bf n}\dS = \frac {1}{|\Gamma|}\left(\intO p(t) \dx \right) \intG {\bf n}\cdot {\bf n} \dS = \intO p(t) \dx
    \quad\text{for all $t\in[0,T]$.}
\end{align*}
In particular, according to Lemma \ref{LEM:diveq}, we have the estimate
\begin{align}
    \norm{{\bf q}}_{\HH^1} \leq C \norm{p}_{L^2}
    \quad\text{on $[0,T]$.}
    \label{estimate:q}
\end{align}
Then, we choose $\be= {\bf q}$ in \eqref{EQ:PDIVZ} and, invoking \ref{ass:visc}, \ref{ass:nutrient} and \ref{ass:Sv}, using the uniform estimates \eqref{EST:UNI:1}, \eqref{EST:UNI:2}, \eqref{estimate:q} as well as \Hol's and Young's inequalities, we derive the estimate
\begin{align}
    \label{EST:P}
    \notag \norm{p}_{L^2}^2 & \leq 
    C \Big(\bignorm{\sqrt{\eta (\bphi)}\D \v}_{\LL^2}^2
    + \norm{\v}_{\LL^2}^2
    + \norm{\Sv(\bphi,\bsig)}_{L^2}^2 
    \\ 
    & \qquad\quad + \norm{\Grad\bphi}_{\LL^2}^2 \norm{\bmu}_{\LL^3}^2
    + \norm{\Grad\bsig}_{\LL^2}^2 \norm{\Ns(\bphi,\bsig)}_{\LL^3}^2
    \Big)
    \le C
\end{align}
on $[0,T]$. Recalling \ref{ass:nutrient} and the uniform estimate \eqref{EST:UNI:1}, we notice that
\begin{align*}
    \norm{\Ns(\bphi,\bsig)}_{\L\infty{\LL^2}} \le C.
\end{align*}
From the Gagliardo--Nirenberg inequality \eqref{GN}, we thus deduce the estimate
\begin{align}
\label{EST:NS}
    \norm{\Ns(\bphi,\bsig)}_{\L4{\LL^3} }^{4} 
    &\leq C \int_0^T  \norm{\Ns(\bphi,\bsig)}_{\LL^2}^{2}
    \norm{\Ns(\bphi,\bsig)}_{\HH^1}^{2} \dt \notag \\
    &\leq C \norm{\Ns(\bphi,\bsig)}_{\L\infty{\LL^2}}^{2}
    \norm{\Ns(\bphi,\bsig)}_{\L2{\HH^1}}^{2} 
    \leq C.
\end{align}
Combining \eqref{EST:P} and \eqref{EST:NS}, we eventually conclude the uniform bound
\begin{align}
    \label{EST:UNI:6}
    & \norm{p}_{L^{4/3}(0,T;L^2)}^{4/3} 
    \notag \\
    & \;\leq 
    C\int_0^T \Big(
    \bignorm{\sqrt{\eta (\bphi)}\D \v}_{\LL^2}^{4/3}
    + \norm{\v}_{\LL^2}^{4/3} +  \norm{\nabla \bphi}_{\LL^2}^{4/3}\norm{\bmu}_{\LL^3}^{4/3} + \norm{\nabla \bsig}_{\LL^2}^{4/3}\norm{\Ns(\bphi,\bsig)}_{\LL^3}^{4/3} +1 
    \Big) \dt
    \notag \\ 
    & \;\leq
     C \Big(
    \bignorm{\sqrt{\eta (\bphi)}\D \v}_{\L2{\LL^2}}^{4/3}
    + \norm{\v}_{\L2{\LL^2}}^{4/3} +  \norm{\nabla \bphi}_{\L4{\LL^2}}^{4/3} \norm{\bmu}_{\L2{\LL^3}}^{4/3}
    \notag \\
    & \;\qquad\quad
    + \norm{\nabla \bsig}_{\L2{\LL^2}}^{4/3}\norm{\Ns(\bphi,\bsig)}_{\L4{\LL^3}}^{4/3}  +1 
    \Big)
    \notag \\
    & \;
    \le C.
\end{align}

\noindent
{\it Step 2.3: Higher regularity for the phase-field.}
To establish higher order uniform a priori estimates on $\bphi$, we test \eqref{DISC:WF:3} with $-\Lap \bphi$ and integrate the resulting equation with respect to time. This is actually allowed since the basis functions $\w_i$, $ i=1,...,kL$ are contructed from eigenfunctions of the eigenvalue problem \eqref{PNEVP} and thus, $-\Lap\bphi(t)\in\CWK$ for all $t\in[0,T]$. Next, we integrate the resulting equation with respect to time and after further integrating by parts, we obtain
\begin{align*}
    \notag
    &\gamma\eps \int_0^T  \norm{\Lap\bphi}_{\LL^2}^2 \dt 
        + \gamma\eps^{-1} \int_0^T\intO \Psi^{(1)}_{\bphi\bphi}(\bphi) \Grad\bphi : \Grad\bphi \dxt \\
    &\quad = \int_0^T\intO - \bmu \cdot \Lap\bphi 
        + \gamma\eps^{-1} 
         \Psi^{(2)}_{\bphi}(\bphi) \cdot \Lap\bphi 
	    + \Np(\bphi,\bsig) \cdot \Lap\bphi \dxt .
\end{align*}
Note that the second integral on the left-hand side is nonnegative as $\Psi^{(1)}_{\bphi\bphi}(\bphi)$ is a positive definite matrix due to the convexity of $\Psi^{(1)}$. Applying Young's inequality on the right-hand side, using \eqref{EST:DN} from \ref{ass:nutrient}, and recalling that $\Psi^{(2)}_{\bphi}$ is Lipschitz continuous (see \ref{ass:pot}), we derive the estimate
\begin{align*}
    \frac{\gamma\eps}{2} \int_0^T \norm{\Delta\bphi}^2_{\LL^2} \dt
    \leq 
    C \int_0^T \big(1 + \norm{\bmu}_{\LL^2}^2 
        + \norm{\bphi}_{\LL^2}^2 
        + \norm{\bsig}_{\LL^2}^2 \big)
    \dt
    \le C.
\end{align*}
Invoking elliptic regularity theory and the uniform estimates \eqref{EST:UNI:1} and \eqref{EST:UNI:3}, we conclude that
\begin{align}
    \label{EST:UNI:12}
    \norm{\bphi}_{L^2(0,T;\HH^2)}
    \le C\big( \norm{\Delta\bphi}_{L^2(0,T;\LL^2)} 
        + \norm{\bphi}_{L^2(0,T;\LL^2)}\big)
    \le C.
\end{align}

We next test \eqref{DISC:WF:3} with $\Lap^2\bphi$ and integrate the resulting equation with respect to time. Arguing similarly as above, $\Lap^2\bphi$ is indeed an admissible test function due to the construction of the basis functions $\w_i$, $ i=1,...,kL$. After integrating by parts, using the chain rule, and recalling that $\Npp = -\Gpp, \Nsp = -\Gsp $ due to \ref{ass:nutrient}, we have

\begin{align}
    \label{EST:L2H3}
    \begin{split}
    \gamma\eps \int_0^T \norm{ \nabla \Lap \bphi }_{\LL^2}^2 \dt
	&= \int_0^T\intO 
	    \Big( \Grad \bmu : \Grad \Lap \bphi 
	    - \gamma\eps^{-1} \Psi_{\bphi\bphi}(\bphi) \Grad\bphi :  \Grad\Lap\bphi 
	\notag\\
	&\qquad\qquad\quad
	    - \Gpp(\bphi,\bsig) \Grad\bphi :  \Grad\Lap\bphi 
	    - \Gsp(\bphi,\bsig) \Grad\bsig :  \Grad\Lap\bphi \Big)
	\dx\dt
	\end{split}
	\notag\\
	&\le C \Big(
	    \norm{\Grad \bmu}_{\L2{\LL^2}}
	    + \norm{\Psi_{\bphi\bphi}(\bphi)}_{\L2{L^\infty}}
	        \norm{\Grad\bphi}_{\L\infty{\LL^2}}
	\notag\\
	&\qquad\quad
	    + \norm{\Grad\bphi}_{\L2{\LL^2}}
	    + \norm{\Grad\bsig}_{\L2{\LL^2}}
	\Big)
	\norm{\Grad\Lap\bphi}_{\L2{\LL^2}}.
\end{align}
If \ref{ass:pot:1} holds, we use Agmon's inequality \eqref{AG} to obtain the bound
\begin{align*}
    \norm{\Psi_{\bphi\bphi}(\bphi)}_{\L2{L^\infty}}^2
    &\le C + C\int_0^T \norm{\bphi}_{\LL^\infty}^4 \dt
    \le C + C\int_0^T \norm{\bphi}_{\HH^1}^2 \norm{\bphi}_{\HH^2}^2 \dt
    \notag\\
    &\le C + C \norm{\bphi}_{\L\infty{\HH^1}}^2 \norm{\bphi}_{\L2{\HH^2}}^2
    \le C.
\end{align*}
On the other hand, if \ref{ass:pot:2} holds, the bound $\norm{\Psi_{\bphi\bphi}(\bphi)}_{\L2{L^\infty}}\le C$ is trivially satisfied.
Applying Young's inequality on the right-hand side of \eqref{EST:L2H3}, we thus infer that
\begin{align*}
    \norm{ \nabla \Lap \bphi }_{\L2{\LL^2}}^2 
	&\le C 
	    \big(\norm{\bmu}_{\L2{\HH^1}}^2
	    + \norm{\bphi}_{\L\infty{\HH^1}}^2
	    + \norm{\bsig}_{\L2{\HH^1}}^2\big)
    \le C.
\end{align*}
Using elliptic regularity theory, as well as the uniform estimates \eqref{EST:UNI:1} and \eqref{EST:UNI:3}, we eventually obtain
\begin{align}
    \label{EST:UNI:13}
    \norm{\bphi}_{L^2(0,T;\HH^3)}
    \le C\big( \norm{\Grad\Delta\bphi}_{L^2(0,T;\LL^2)} 
        + \norm{\Grad\bphi}_{L^2(0,T;\LL^2)}\big)
    \le C.
\end{align}

\noindent
\textit{Step 2.4: Estimates for the gradient of the potential.}
Using interpolation (Lemma~\ref{LEM:INT}) and Sobolev's embedding theorem, we obtain the continuous embedding
\begin{align}
    \label{INT:PHI}
    \L\infty {\HH^1(\Omega)} \cap \L2 {\HH^3(\Omega)} \emb \L \kappa {\LL^{\frac {6 \kappa}{\kappa-8}}(\Omega)}
\end{align}
for all $\kappa \in (8,\infty)$.
This entails that
\begin{align*}
    \bphi \in \L{20} {\LL^{10}(\Omega)} \cap \L{10} {\LL^{30}(\Omega)}.
\end{align*}
Recalling \ref{ass:pot}, for general exponents $q$ and $r$ still to be chosen, we derive the estimate
\begin{align*}
    \norm{\Psi_{\bphi}(\bphi)}^q_{\L q {\LL^r}}
    & \leq C \int_0^T  \Big( \norm{ \,|\bphi|^{\rho -1}\, }^q_{{\LL^r}} +1 \Big) \dt
  \leq C \int_0^T \Big( \norm{ \bphi}^{q (\rho -1)}_{{\LL^{r(\rho -1)}}}  +1 \Big) \dt.
\end{align*}
Choosing $(q,r)=(4,2)$ and $(q,r)=(2,6)$, respectively, and recalling that $\rho\le 4$, we directly conclude that
\begin{align}
    \label{EST:UNI:14}
  \norm{\Psi_{\bphi}(\bphi)}_{  \L{4} {\LL^{2}}\cap\L{2} {\LL^{6}} } \leq C.
\end{align}

\noindent
{\it Step 2.5: Estimates for the convection terms.}
We now use the estimates established above to derive uniform bounds on the convection terms $\Div(\bphi\otimes\v)$ and $\Div(\bsig\otimes\v)$. 

Recalling the decompositions \eqref{DEC:divphi} and \eqref{DEC:divsigma}, and using \ref{ass:Sv}, Lemma~\ref{LEM:INT}, \Hol's inequality and the continuous embedding $\HH^1\emb \LL^6$, we infer the uniform bounds
\begin{align}
    \label{EST:UNI:7*}
     \norm{\Div(\bphi\otimes\v)}_{\L{4/3} {\LL^{3/2}} }^{4/3}
    &  \le C \int_0^T 
        \norm{\nabla \bphi}_{\LL^6}^{4/3} \norm{\v}_{\LL^2}^{4/3}
        + \norm{\bphi}_{\LL^{ 3/2}}^{4/3} \dt
   \notag \\
   & \le C \int_0^T 
        \norm{\bphi}_{\HH^1}^{2/3} 
        \norm{\bphi}_{\HH^3}^{2/3} 
        \norm{\v}_{\LL^2}^{4/3}
        + \norm{\bphi}_{\LL^{ 3/2}}^{4/3} \dt 
    \notag \\
    & \le C (\norm{\bphi}_{\L{\infty}{\HH^1}}^{2/3}
        \norm{\bphi}_{\L{2}{\HH^3}}^{2/3}
        \norm{\v}_{\L{2}{\LL^2}}^{4/3}+1)
    \le C,
    \\[1ex]
    \label{EST:UNI:8*}
     \norm{\Div(\bsig\otimes\v)}_{\L1 {\LL^1}}
    &  \le C \int_0^T \norm{\nabla \bsig}_{\LL^2} \norm{\v}_{\LL^2}
        + \norm{\bsig}_{\LL^{1}} \dt
    \le C,
    \\[1ex]
    \label{EST:UNI:7}
    \norm{\Div(\bphi\otimes\v)}_{\L2 {\LL^{3/2}}}^2
    &\le C \int_0^T \norm{\nabla \bphi}^{2}_{\LL^2} \norm{\v}^{2}_{\LL^6}
        + \norm{\bphi}^{2}_{\LL^{ 3/2}} \dt
    \le C\big( 1 + \eta_0^{-1} \big),
    \\[1ex]
    \label{EST:UNI:8}
    \norm{\Div(\bsig\otimes\v)}_{\L1 {\LL^{3/2}}}
    &\le C \int_0^T \norm{\nabla \bsig}_{\LL^2} \norm{\v}_{\LL^6}
        + \norm{\bsig}_{\LL^{ 3/2}} \dt
    \le C\big( 1 + \eta_0^{-1} \big)^{\frac 12}.
\end{align}

\noindent
{\it Step 2.6: Estimates for the time derivatives.}
Let $\bz\in L^2(0,T;H^1(\Omega;\R^L))$, $\bx\in L^4(0,T;H^1(\Omega;\R^M))$ be arbitrary, and let $\mathbb P\bz$ and $\mathbb P\bx$ denote the projected functions given by
\begin{align}
    \label{DEF:PZPX}
    \mathbb P\bz(x,t) = \big[\mathbb P_{\CWK}\big(\bz(t)\big)\big](x),
    \quad
    \mathbb P\bx(x,t) = \big[\mathbb P_{\CZK}\big(\bx(t)\big)\big](x)
    \quad \text{for almost all $(x,t)\in \Omega\times (0,T)$.}
\end{align}
We thus have the estimates
\begin{align}
    \label{EST:PZPX}
    \norm{\mathbb P\bz(t)}_{\HH^1} \le \norm{\bz(t)}_{\HH^1},
    \quad
    \norm{\mathbb P\bx(t)}_{\HH^1} \le \norm{\bx(t)}_{\HH^1}
\end{align}
for almost all $t\in[0,T]$. Moreover, since the families $\{\w_i\}_{i\in\mathbb N}$ and $\{\z_i\}_{i\in\mathbb N}$ are orthonormal bases of $L^2(\Omega;\R^L)$ and $L^2(\Omega;\R^M)$, respectively, we infer that 
\begin{align*}
    \inn{\delt\bphi}{\bz}_{\LL^2} 
    = \inn{\delt\bphi}{\mathbb P\bz}_{\LL^2},
    \quad
    \inn{\delt\bsig}{\bx}_{\LL^2} 
    = \inn{\delt\bsig}{\mathbb P\bx}_{\LL^2}
    \quad\text{on $[0,T]$.}
\end{align*}
Now we test \eqref{DISC:WF:2} with $\mathbb P\bz$ and we integrate with respect to time from $0$ to $T$. Using \ref{ass:tensors}, \ref{ass:sources}, \ref{ass:Sv}, the uniform estimates \eqref{EST:UNI:1}, \eqref{EST:UNI:3} and \eqref{EST:UNI:7}, and the continuous embedding $\LL^{\frac 32} \emb \Vp $, we derive the estimate
\begin{align*}
    &\big|{\ang{ \delt \bphi}{ \bz}_{L^2(0,T;\HH^1)}\big|}
    = \left| \int_0^T \inn{ \delt \bphi}{ \mathbb P\bz}_{\LL^2} \dt \right|
    \\ & \quad
    \leq C \int_0^T 
    \norm{\Div(\bphi\otimes\v)}_{\Vp}\norm{\mathbb P\bz}_{\HH^1}
    + \norm{\nabla \bmu}_{\LL^2}\norm{\mathbb P\bz}_{\LL^2}
    + \norm{\Sp(\bphi,\bsig, \bmu)}_{\LL^2}\norm{\mathbb P\bz}_{\LL^2}
    \dt
    \\ & \quad 
    \leq C \norm{\bz}_{L^2(0,T;\HH^1)}
    \left(\int_0^T 
        \norm{\Div(\bphi\otimes\v)}_{\LL^{3/2}}^2
        + \norm{\bmu}_{\HH^1}^2
        + \norm{\bphi}_{\LL^2}^2
        + \norm{\bsig}_{\LL^2}^2
        + 1 
    \dt \right)^{\frac 12}
    \\[1ex]&\quad
    \leq C \big( 1 + \eta_0^{-1}\big)^{\frac 12} \norm{\bz}_{L^2(0,T;\HH^1)}.
\end{align*}
Hence, taking the supremum over all $\bz\in L^2(0,T;\HH^1)$ with $\norm{\bz}_{L^2(0,T;\HH^1)}\le 1$, we obtain the uniform bound
\begin{align}
    \label{EST:UNI:10}
    \norm{\delt \bphi}_{\L2 {\Vp} }
    \leq C \big( 1 + \eta_0^{-1}\big)^{\frac 12}.
\end{align}

To estimate the time derivative of $\bsig$ we argue similarly. 
Namely, we test \eqref{DISC:WF:4} with $\mathbb P\bx$ and integrate with respect to time from $0$ to $T$.
Using \ref{ass:tensors}, \ref{ass:sources}, \ref{ass:Sv}, the uniform estimates \eqref{EST:UNI:1}, \eqref{EST:UNI:3} and \eqref{EST:UNI:7}, we obtain
\begin{align*}
    &\big| \ang{\delt\bsig}{\bx}_{L^{4}(0,T;\HH^1)} \big|
    = \left|\int_0^T \inn{\delt\bsig}{\mathbb P\bx}_{\LL^2} \dt  \right| 
    \\
    & \quad \leq 
    C \int_0^T
        \Big( \norm{\Div(\bsig\otimes\v)}_{\Vp}\norm{\mathbb P\bx}_{\HH^1}
        + \norm{\nabla \Ns(\bphi,\bsig)}_{\LL^2}\norm{\mathbb P\bx}_{\LL^2}
    \\ & \hspace{75pt}
        + \norm{\Ss(\bphi,\bsig,\bmu)}_{\LL^2}\norm{\mathbb P\bx}_{\LL^2}
        + \norm{\SG(\bphi,\bsig)}_{\LL^2_\Gamma} \norm{\bx}_{\LL^2_\Gamma} \Big)
    \dt
    \\
    & \quad \leq 
    C \norm{\bx}_{L^4(0,T;\HH^1)} 
    \left( \int_0^T
        \norm{\Div(\bsig\otimes\v)}_{\Vp}^{\frac 43}
        + \norm{\nabla \Ns(\bphi,\bsig)}_{\LL^2}^{\frac 43} \right.
    \\ & \hspace{110pt} 
        \left.\phantom{\int_0^T}
        + \norm{\Ss(\bphi,\bsig,\bmu)}_{\LL^2}^{\frac 43}
        + \norm{\SG(\bphi,\bsig)}_{\LL^2_\Gamma}^{\frac 43} 
    \dt \right)^{\frac 34}
    \\
    & \quad \leq 
    C\big( 1 + \eta_0^{-1} \big)^{\frac 12} \norm{\bx}_{L^4(0,T;\HH^1)}.
\end{align*}
Taking the supremum over all $\bx\in L^4(0,T;\HH^1)$ with $\norm{\bx}_{L^4(0,T;\HH^1)}\le 1$ we eventually get
\begin{align}
    \label{EST:UNI:11}
    \norm{\delt \bsig}_{\L{4/3} {\Vp} }
    \leq C\big( 1 + \eta_0^{-1} \big)^{\frac 12}.
\end{align}

\noindent
\textbf{Step 3: Extension onto the whole time interval $[0,T]$.}
As the constant $C_{AP}$ is independent of the time $T_k$, we will use the a priori estimate \eqref{EST:AP:1} to extend the approximate solution $(\bphi_k,\bmu_k,\bsig_k,\v_k,p_k)$ onto the whole time interval $[0,T]$. 
To see this, we recall from Step~1 that the coefficients $(\mathbf a^k,\mathbf c^k)$ are determined as a solution of a nonlinear ODE system. Using \eqref{GAL:ANSATZ}, we infer that for any $T_k\in[0,T_k^*)$, all $t\in [0,T_k]$, and all $i\in\{1,...,kL\}$, and $j\in\{1,...,kM\}$,
\begin{align*}
    |a_i^k(t)| + |c_j^k(t)| 
    &= \abs{\inn{\bphi_k(t)}{\w_i}_{\LL^2}} 
    + \abs{\inn{\bsig_k(t)}{\z_j}_{\LL^2}} \\
    &\le \norm{\bphi_k(t)}_{L^\infty(0,T_k;\LL^2)} 
    + \norm{\bsig_k(t)}_{L^\infty(0,T_k;\LL^2)} 
    \le C_{AP}.
\end{align*}
This means that the solution $(\mathbf a^k,\mathbf c^k)^\top$ is bounded on the time interval $[0,T_k^*)$ and hence, it can be extended beyond $T_k^*$. However, as $(\mathbf a^k,\mathbf c^k)^\top$ was assumed to be a right-maximal solution, this is a contradiction. We thus conclude that the solution $(\mathbf a^k,\mathbf c^k)^\top$ actually exists on the whole time interval $[0,T]$. As the coefficients $\mathbf b^k$ can be reconstructed from $(\mathbf a^k,\mathbf c^k)^\top$ via the vector-valued algebraic equation mentioned in Step~1, we further infer that $\mathbf b^k$ also exists on $[0,T]$. This directly implies that the functions $\bphi_k$, $\bmu_k$, $\bsig_k$, $\v_k$ and $p_k$ exist on $[0,T]$ and satisfy the discretized weak formulation \eqref{DISC:WF} on $[0,T]$. As the choice of $T_k$ did not play any role in the proof of the a priori estimates presented in Step~2, it is clear that the a priori estimate \eqref{EST:AP:1} holds true with $T_k$ and $T_k^*$ being replaced by the final time $T$. 

\noindent
\textbf{Step 4: Convergence to a weak solution.}
Exploiting the a priori estimates in Step~2,
and using Sobolev's embedding theorem and interpolation (Lemma~\ref{LEM:INT}),
we conclude that there exists a quintuplet $(\bphi,\bmu,\bsig,\v,p)$ 
such that the sequence of approximate solutions $\{(\bphi_k,\bmu_k,\bsig_k,\v_k,p_k)\}_{k\in\mathbb N}$ satisfies 
\begin{subequations}
\label{CONV:WS}
\begin{alignat}{2}
    \label{CONV:PHI}
    \bphi_k &\to \bphi
    &&\quad\text{weakly in $ H^1(0,T;(\HH^1)')\cap L^2(0,T;\HH^3)  $,} \notag\\
    &&&\qquad\text{weakly-$^*$ in $L^\infty(0,T;\HH^1)$, a.e.~in $Q$,} \notag \\
    &&&\qquad\text{and strongly in $C([0,T];\HH^s) \cap L^2(0,T;\HH^{2+s})$ for all $s\in [0,1)$,} \\
    \label{CONV:PHI:G}
    \bphi_k\vert_\Gamma &\to \bphi\vert_\Gamma
    &&\quad\text{strongly in $C([0,T];\LL^2_\Gamma)$,
        and a.e.~on $\Sigma$,}\\
    \label{CONV:SIGMA}
    \bsig_k &\to \bsig
    &&\quad\text{weakly in $W^{1,\frac 43}(0,T;(\HH^1)') \cap  L^2(0,T;\HH^1) $}, \notag\\
    &&&\qquad\text{weakly-$^*$ in $L^\infty(0,T;\LL^2)$, a.e.~in $Q$,} \notag\\
    &&&\qquad\text{and strongly in $C([0,T];(\HH^1)') \cap L^2(0,T;\HH^s)$ for all $s\in [0,1)$,} \\
    \notag
    \bsig_k\vert_\Gamma &\to \bsig\vert_\Gamma
    &&\quad \text{weakly in $L^4(0,T;\LL^2_\Gamma)$, strongly in $L^2(0,T;\LL^2_\Gamma)$,}\\
    &&&\qquad\text{{and a.e.~on} $\Sigma$,}
     \label{CONV:SIGMA:G}
     \\
    \label{CONV:MU}
    \bmu_k &\to \bmu
    &&\quad\text{weakly in $L^2(0,T;\HH^1)$},\\
    \label{CONV:V}
    \v_k &\to \v
    &&\quad\text{weakly in $L^2(0,T;\HH^1)$}, \\
    \label{CONV:P}
    p_k &\to p
    &&\quad\text {weakly in $L^{\frac 43} (0,T;L^2)$},
\end{alignat}
\end{subequations}
as $k\to\infty$, along a nonrelabeled subsequence. We point out that 
the strong convergences in \eqref{CONV:PHI} and \eqref{CONV:SIGMA} are a direct consequence of the Aubin--Lions--Simon lemma (cf.~\cite[Theorem~II.5.16]{boyer}). Then
the strong convergences in \eqref{CONV:PHI:G} and \eqref{CONV:SIGMA:G} follow from \eqref{CONV:PHI} and \eqref{CONV:SIGMA} by means of the trace theorem.
In particular, this entails that the {limit} $(\bphi,\bmu,\bsig,\v,p)$ satisfies the regularity condition \eqref{REG:MCHB}, and we further know that $\bphi\in L^2(0,T;\HH^3)$.
Recalling the assumptions \ref{ass:tensors}--\ref{ass:source:boundary}, we infer from \eqref{CONV:WS} the almost everywhere convergence properties
\begin{subequations}
\label{CONV:AE}
\begin{alignat}{3}
    &\CC(\bphi_k,\bsig_k) \to \CC(\bphi,\bsig),
    &&\quad \DD(\bphi_k,\bsig_k) \to \DD(\bphi,\bsig)
    &&\quad\text{a.e.~in $Q$,}\\
    &\eta(\bphi_k) \to \eta(\bphi),
    &&\quad \lambda(\bphi_k) \to \lambda(\bphi)
    &&\quad\text{a.e.~in $Q$,}\\
    \label{CONV:AE:3}
    &\Np(\bphi_k,\bsig_k) \to \Np(\bphi,\bsig),
    &&\quad \Ns(\bphi_k,\bsig_k) \to \Ns(\bphi,\bsig)
    &&\quad\text{a.e.~in $Q$,}\\
    \label{CONV:AE:4}
    &\Grad\Ns(\bphi_k,\bsig_k) \to \Grad\Ns(\bphi,\bsig),
    &&\quad \Sv(\bphi_k,\bsig_k) \to \Sv(\bphi,\bsig)
    &&\quad\text{a.e.~in $Q$,}\\
    &\Psi_{\bphi}(\bphi_k) \to \Psi_{\bphi}(\bphi)
    &&
    &&\quad\text{a.e.~in $Q$,}\\
    &\SG(\bphi_k,\bsig_k) \to \SG(\bphi,\bsig)
    &&&&\quad\text{a.e.~on $\Sigma$,}\\
    &\Lam_{\bphi}(\bphi_k,\bsig_k) \to \Lam_{\bphi}(\bphi,\bsig),
    &&\quad \Th_{\bphi}(\bphi_k,\bsig_k) \to \Th_{\bphi}(\bphi,\bsig)
    &&\quad\text{a.e.~in $Q$,}\\
    &\Lam_{\bsig}(\bphi_k,\bsig_k) \to \Lam_{\bsig}(\bphi,\bsig),
    &&\quad \Th_{\bsig}(\bphi_k,\bsig_k) \to \Th_{\bsig}(\bphi,\bsig)
    &&\quad\text{a.e.~in $Q$,}
\end{alignat}
\end{subequations}
after another subsequence extraction.
From \eqref{CONV:AE:3}, \eqref{CONV:AE:4} and the a priori estimate \eqref{EST:AP:1}, we further conclude that, 
\begin{alignat}{2}
\label{CONV:PSI}
    &\Psi_{\bphi}(\bphi_k) \to \Psi_{\bphi}(\bphi),
    &&\quad\text{weakly in $L^4(0,T;\LL^2) \cap L^2(0,T;\LL^6)$,}\\
\label{CONV:NS}
    &\Grad\Ns(\bphi_k,\bsig_k) \to \Grad\Ns(\bphi,\bsig)
    &&\quad\text{weakly in $L^2(0,T;\LL^2)$,}
\end{alignat}
as $k\to\infty$, up to subsequence extraction.
Using the decomposition \eqref{DEC:divphi}, and the convergences \eqref{CONV:PHI}, \eqref{CONV:V}, and \eqref{CONV:AE:4}, it is straightforward to check that
\begin{align}
    \label{CONV:DIV:PHIV}
    \Div(\bphi_k\otimes \v_k) \to \Div(\bphi\otimes \v) \quad\text{weakly in $L^2(0,T;\LL^{\frac 32})$}
\end{align}
as $k\to\infty$. Moreover, due to the a priori estimate \eqref{EST:AP:2}, the Banach--Alaoglu theorem implies that there exists a function $\ttau  \in L^1(0,T;\LL^{\frac 32})$ such that
\begin{align*}
    \Div(\bsig_k\otimes \v_k) \to \ttau  \quad\text{weakly in $L^1(0,T;\LL^{\frac 32})$}.
\end{align*}
Let now $\bx\in C^\infty_c(Q)$ be an arbitrary test function. Performing an integration by parts, we obtain
\begin{align*}
    \intQ \Div(\bsig_k\otimes \v_k) \cdot \bx \dxt 
    = - \intQ (\bsig_k \otimes \v_k) : \Grad\bx \dxt .
\end{align*}
Due to \eqref{CONV:SIGMA} and \eqref{CONV:V}, we may pass to the limit on the right-hand side. This yields
\begin{align*}
    \intQ \Div(\bsig_k\otimes \v_k) \cdot \bx \dxt 
    &\to - \intQ (\bsig  \otimes \v) : \Grad\bx \dxt 
\end{align*}
and after another integration by parts, we conclude that
$\Div(\bsig_k\otimes \v_k) \to \Div(\bsig\otimes \v)$
as $k\to\infty$ in the sense of distributions. Because of uniqueness of the limit, we thus have $\ttau = \Div(\bsig\otimes \v)$ almost everywhere in $Q$ and hence,
\begin{align}
    \label{CONV:DIV:SIGV}
    \Div(\bsig_k\otimes \v_k) \to \Div(\bsig\otimes \v)  \quad\text{weakly in $L^1(0,T;\LL^{\frac 32})$}.
\end{align}

Now, let $\delta\in C^\infty([0,T])$ and $\be \in H^1(\Omega;\R^d)$ be arbitrary, and for any fixed $k\in \mathbb{N}$, let  $i\in\{1,...,kL\}$, and $j\in\{1,...,kM\}$ be arbitrary. We test the discretized weak formulation \eqref{DISC:WF} with $\delta\be$, $\delta\w_i$ and $\delta\z_j$ and integrate with respect to time from $0$ to $T$. This yields
\begin{subequations}
\label{DISC:WFT}
\begin{align}
    \label{DISC:WFT:1}
	&\intQ  \T(\bphi_k,\v_k, p_k) : \delta\nabla \be + \nu  \v_k \cdot \delta\be  \dxt
	=
    \intQ  (\nabla \bphi_k)^\top  \bmu_k \cdot \delta\be 
    +  (\nabla \bsig_k)^\top\Ns(\bphi_k,\bsig_k) \cdot \delta\be \dxt,
	\\
	& \notag
	\int_0^T \ang{\delt \bphi_k}{\w_i}_{\HH^1} \delta \dt  
	+ \intQ \Div(\bphi_k\otimes\v_k) \cdot \delta\w_i \dxt
	\\ & \quad \label{DISC:WFT:2}
	= - \intQ \CC(\bphi_k,\bsig_k) \nabla \bmu_k : \delta \nabla \w_i + \Sp(\bphi_k,\bsig_k,\bmu_k) \cdot \delta\w_i
	\dxt,
	\\ & \label{DISC:WFT:3}
	\intQ \bmu_k \cdot \delta\w_i \dxt 
	= \intQ \gamma\eps \nabla \bphi_k : \delta \nabla \w_i 
	+ \gamma\eps^{-1} \Psi_{\bphi}(\bphi_k) \cdot \delta \w_i
	+ \Np(\bphi_k,\bsig_k) \cdot \delta \w_i \dxt,
	\\
	& \notag
	\int_0^T \ang{\delt \bsig_k}{\z_j}_{\HH^1} \delta\dt 	
	+ \intQ \Div(\bsig_k\otimes\v_k) \cdot \delta\z_j \dxt
	\\
	& \notag \quad
	= - \intQ \DD(\bphi_k,\bsig_k) \nabla \Ns(\bphi_k,\bsig_k): \delta \Grad \z_j \dxt
	\\ & \quad\qquad \label{DISC:WFT:4}
	- \intQ \Ss(\bphi_k,\bsig_k,\bmu_k) \cdot \delta\z_j \dxt
	+ \intS  \SG(\bphi_k,\bsig_k)\cdot \delta\z_j \dS\dt.
\end{align}
\end{subequations}
Invoking the convergence properties \eqref{CONV:WS} and \eqref{CONV:AE}, and using Lebesgue's dominated convergence theorem, we deduce that
\begin{subequations}
\label{CONV:LCT}
\begin{alignat}{3}
    &\eta(\bphi_k) \delta\Grad\be 
    \to \eta(\bphi) \delta\Grad\be,
    &&\quad\lambda(\bphi_k)\delta\Grad\be 
    \to \lambda(\bphi)\delta\Grad\be
    &&\quad\text{in $\LL^2(Q)$,}\\
    &\CC(\bphi_k,\bsig_k) \delta\Grad\w_i 
    \to \CC(\bphi,\bsig) \delta\Grad\w_i,
    &&\quad\DD(\bphi_k,\bsig_k) \delta\Grad\z_j 
    \to \DD(\bphi,\bsig) \delta\Grad\z_j 
    &&\quad\text{in $\LL^2(Q)$,}\\
    &\Th_{\bphi}(\bphi_k,\bsig_k) ^\top \delta\w_i 
    \to \Th_{\bphi}(\bphi,\bsig)^\top \delta\w_i,
    &&\quad\Th_{\bsig}(\bphi_k,\bsig_k) ^\top \delta\z_j 
    \to \Th_{\bsig}(\bphi,\bsig)^\top \delta\z_j 
    &&\quad\text{in $L^2(Q)$,}
\end{alignat}
\end{subequations}
as $k\to\infty$.
To establish a similar convergence result for terms like $[\Ns(\bphi_k,\bsig_k)]_j [\delta\be]_i$ for all $i\in\{1,...,d\}$ and $j\in\{1,...,M\}$, we intend to employ a generalized version of Lebesgue's dominated convergence theorem, see \cite[Sec.~3.25]{Alt}. To this end, for any $i\in\{1,...,d\}$ and $j\in\{1,...,M\}$, we first recall that
\begin{alignat*}{2}
    [\Ns(\bphi_k,\bsig_k)]_j [\delta\be]_i
    &\to [\Ns(\bphi,\bsig)]_j [\delta\be]_i
    &&\quad\text{a.e. in $Q$ as $k\to \infty$}, \\
     \big|[\Ns(\bphi_k,\bsig_k)]_j [\delta\be]_i\big|^2 
    &\le B_N^2\big(\abs{\bphi_k}^2 + \abs{\bsig_k}^2 + 1\big)\abs{\delta\be}^2 =: g_k
    &&\quad\text{a.e. in $Q$ for all $k\in\mathbb N$},
\end{alignat*}
due to \eqref{CONV:AE:3} and \ref{ass:nutrient}. Using the convergences in \eqref{CONV:PHI} and \eqref{CONV:SIGMA}, a straightforward computation reveals that
\begin{align*}
    g_k \to g 
    := B_N^2\big(\abs{\bphi}^2 
        + \abs{\bsig}^2 + 1\big)\abs{\delta\be}^2
    \quad\text{in $L^1(Q)$}.
\end{align*}
Hence, we apply Lebesgue's generalized convergence theorem to conclude that 
\begin{subequations}
\label{CONV:LGCT}
\begin{align}
    [\Ns(\bphi_k,\bsig_k)]_j [\delta\be]_i
    \to [\Ns(\bphi,\bsig)]_j [\delta\be]_i
    \quad\text{in $L^2(Q)$}
\end{align}
as $k\to\infty$, for all $i\in\{1,...,d\}$ and $j\in\{1,...,M\}$. 
Proceeding similarly, we further obtain the following convergences:
\begin{alignat}{2}
    \Np(\bphi_k,\bsig_k)\cdot\delta\w_i
    &\to \Np(\bphi,\bsig)\cdot\delta\w_i
    &&\quad\text{in $L^2(Q)$,}\\
    \Lam_{\bphi}(\bphi_k,\bsig_k) \cdot\delta\w_i 
    &\to \Lam_{\bphi}(\bphi,\bsig) \cdot\delta\w_i
    &&\quad\text{in $L^2(Q)$,}\\
    \Lam_{\bsig}(\bphi_k,\bsig_k) \cdot\delta\z_j
    &\to \Lam_{\bsig}(\bphi,\bsig) \cdot\delta\z_j
    &&\quad\text{in $L^2(Q)$,}\\
    \Lam_{\Gamma}(\bphi_k,\bsig_k) \cdot\delta \z_j
    &\to \Lam_{\Gamma}(\bphi,\bsig) \cdot\delta\z_j
    &&\quad\text{in $L^2(\Sigma)$,}
\end{alignat}
\end{subequations}
for all $i\in\{1,...,kL\}$ and $j\in\{1,...,kM\}$.

Eventually, invoking the convergences \eqref{CONV:WS}, \eqref{CONV:PSI}--\eqref{CONV:DIV:SIGV}, \eqref{CONV:LCT} and \eqref{CONV:LGCT}, we may pass to the limit in \eqref{DISC:WFT}. As the test function $\delta$ and the indices $i\in\{1,...,kL\}$, and $j\in\{1,...,kM\}$ can be chosen arbitrarily, we conclude by means of a diagonal argument that the quintuplet 
$(\bphi,\bmu,\bsig,\v,p)$ satisfies the weak formulation \eqref{WF} for all test functions $\be\in H^1(\Omega;\R^d)$, $\bz = \w_i$, $\bt = \w_i$, $\bx = \z_j$ with $i,j\in\mathbb N$. Next, we recall that the families $\{\w_i\}_{i\in\mathbb N}$ and $\{\z_j\}_{j\in\mathbb N}$ are Schauder bases of $H^2_\n(\Omega;\R^L)$ and $H^2_\n(\Omega;\R^M)$, respectively. Since $H^2_\n(\Omega;\R^L)$ is dense in $H^1(\Omega;\R^L)$, and $H^2_\n(\Omega;\R^M)$ is dense in $H^1(\Omega;\R^M)$, we eventually conclude that the weak formulation \eqref{WF} is actually satisfied for all test functions $\be\in H^1(\Omega;\R^d)$, $\bz,\bt\in H^1(\Omega;\R^L)$ and $\bx\in H^1(\Omega;\R^M)$.
Moreover, the identities
\begin{alignat*}{2}
    \Div(\v) &= \Sv(\bphi,\bsig) 
    &&\quad \text{a.e. in $Q$,}\\
    \bphi\vert_{t=0} &= \bphi_0
    &&\quad \text{a.e. in $\Omega$,}\\
    \ang{\bsig\vert_{t=0}}{\Phi}_{\HH^1} &= \ang{\bsig_0}{\Phi}_{\HH^1}
    &&\quad \text{for all $\Phi\in H^1(\Omega; \R^M)$}
\end{alignat*}
follow directly from the convergences stated in \eqref{CONV:WS} and the uniqueness of the limit. This proves that the quintuplet $(\bphi,\bmu,\bsig,\v,p)$ is indeed a weak solution to the multiphase Cahn--Hilliard--Brinkman system \eqref{MCHB} in the sense of Definition~\ref{DEF:WEAKSOL}. 

\noindent
\textbf{Step 5: Further properties.}
We will now establish the remaining properties of the weak solution constructed in Step~4.

Using the convergences \eqref{CONV:WS} and the weak lower semicontinuity of the norms, we infer from the a priori estimate \eqref{EST:AP:1} that
\begin{align}
    \label{EST:APOST}
    &\norm{\bphi}_{\L\infty{\HH^1} \cap \L2{\HH^3}}
    +\norm{\bsig}_{\L\infty{\LL^2} \cap \L2{\HH^1}}
    +\norm{\bsig}_{\L4{\LL^2_\Gamma}}
    \notag\\
    &\quad
    +\norm{\bmu}_{\L2{\HH^1}}
    +\norm{\v}_{\L2{\LL^2}}
    +\bignorm{\sqrt{\eta(\bphi)}\, \D \v}_{\L2{\LL^2}}
    +\norm{p}_{\L{4/3}{L^2}}
    \notag\\
    &\quad
    + \norm{\Psi_{\bphi}(\bphi)}_{ \L{4} {\LL^{2}}\cap\L{2} {\LL^{6}} }
    + \norm{\nabla \Ns(\bphi,\bsig)}_{ \L{2} {\LL^{2}}} 
    \notag\\
    &\quad 
    +\norm{\Div(\bphi\otimes\v)}_{\L{4/3}{\LL^{3/2}}}
    +\norm{\Div(\bsig\otimes\v)}_{\L{1}{\LL^{1}}}
   \le C_{AP}.
\end{align}
In particular, this means that the second regularity in \eqref{REG:MCHB:ADD} is already established.
Furthermore, in Step~4 have already shown that
\begin{align*}
    \bphi \in H^1\big(0,T;(\HH^1)'\big) \cap L^2(0,T;\HH^3)
    \subset H^1\big(0,T;\HH^{-1}\big) \cap L^2(0,T;\HH^3).
\end{align*}
Invoking a result from interpolation theory \cite[Thm.~4.10.2]{amann} as well as Lemma~\ref{LEM:INT}, we conclude that
\begin{align*}
    \bphi \in 
    C\Big([0,T];(\HH^{-1},\HH^3)_{\frac 12, 2}\Big)
    = C\big([0,T];\HH^1\big)
\end{align*}
which proves the first regularity in \eqref{REG:MCHB:ADD}. 
\end{proof}

%
%

\section{``Darcy limit'' and existence of weak solutions to the (MCHD) system} \label{SECT:EX:MCHD}

This section is devoted to the construction of a weak solution to the multiphase Cahn--Hilliard--Darcy system \eqref{MCHB}
in the sense of Definition \ref{DEF:WEAKSOL:DARCY}. This is achieved by an asymptotic technique, where the positive viscosity functions in the system (MCHB) are sent to zero.

\begin{proof}[Proof of Theorem \ref{THM:EXISTENCE:WEAK:DARCY}]
For every $n\in\mathbb N$, let $\eta_n$ and $\lambda_n$ be viscosity functions as described in Theorem~\ref{THM:EXISTENCE:WEAK:DARCY}, and let
$(\bphi_n, \bmu_n, \bsig_n, \v_n, p_n)$ denote a weak solution of the Cahn--Hilliard--Brinkman system \eqref{MCHB} obtained from Theorem~\ref{THM:EXISTENCE:WEAK} with the choices $\eta= \eta_n$ and $\lambda = \lambda_n$. 
We point out that by this explicit choice, we do not require the axiom of choice, even though the uniqueness of the weak solutions is unknown.
We recall that, owing to Theorem \ref{THM:EXISTENCE:WEAK}, the solutions
$(\bphi_n, \bmu_n, \bsig_n, \v_n, p_n) $ satisfy the weak formulation \eqref{WF:1}--\eqref{WF:4} (written for $\eta= \eta_n$ and $\lambda = \lambda_n$) and exhibit the regularities
\begin{align}
\label{REG:MCHB:N}
\left\{\;
\begin{aligned}
	&\bphi_n \in  \H1 {(\HH^1)'} \cap C([0,T];\HH^1)
	    \cap \L2 {\HH^3},
	\\ 
	&\bsig_n \in \W{1,\frac 43} {(\HH^1)'} 
	    \cap  C([0,T];(\HH^1)') 
	    \cap \L\infty{\LL^2}
	    \cap \L2 {\HH^1}	    ,
	\\
	&\bphi_n\vert_\Gamma \in C([0,T];\LL^2_\Gamma),
	\quad \bsig_n\vert_\Gamma \in \L4{\LL^2_\Gamma} ,
	\\
	&\bmu_n \in  \L2 {\HH^1},
	\quad
	\v_n \in  \L{2} {\HH^1},
	\quad
	p_n \in  \L{\frac 43}  {L^2},
	\\
	&\Div(\bphi_n\otimes\v_n) \in L^2(0,T;\LL^{\frac 32}),
	\quad \Div(\bsig_n\otimes\v_n) \in L^1(0,T;\LL^{\frac 32}).
\end{aligned}
\right.
\end{align}
Since for any fixed $n\in\mathbb N$, the viscosities $\eta_n$ and $\lambda_n$ are assumed to be compatible with \ref{ass:visc}, there exist constants $0<\eta_{0,n}<\eta_{1,n}$ and $\lambda_{*,n}>0$ such that \eqref{EST:VISC} is fulfilled.
In view of \eqref{darcy:ass:viscosities}, we assume (without loss of generality) that $\eta_{1,n} = \lambda_{*,n} = 1$ for all $n\in\mathbb N$ and we further fix
\begin{align*}
    \eta_{0,n} := \underset{\p\in\R^L}{\inf} \eta_n(\p).
\end{align*}

To investigate the convergence of the sequence $\{(\bphi_n, \bmu_n, \bsig_n, \v_n, p_n)\}_{n\in\mathbb N}$, we first need to derive suitable bounds that are uniform in $n$.

\noindent
\textbf{Step 1: Uniform estimates.}
In the following, the letter $C$ denotes generic positive constants that do not depend on $n$. They may still depend on the initial data and the other constants from Section~\ref{SECT:ASS}, except for $\eta_{0,n}$.
We already know from Theorem~\ref{THM:EXISTENCE:WEAK} that
\begin{align}
    \label{EST:UNI:D1}
    &\norm{\bphi_n}_{\L\infty{\HH^1} \cap \L2{\HH^3}}
    +\norm{\bsig_n}_{\L\infty{\LL^2} \cap \L2{\HH^1}}
    +\norm{\bsig_n}_{\L4{\LL^2_\Gamma}}
    \notag\\
    &\quad
    +\norm{\bmu_n}_{\L2{\HH^1}}
    +  \norm{\v_n}_{\L{2}{\LL^2}} 
    + \bignorm{\sqrt{\eta_n(\bphi_n)}\, \D \v_n}_{\L2{\LL^2}}
    +\norm{p_n}_{\L{4/3}{L^2}}
    \notag\\
    &\quad
    + \norm{\Psi_{\bphi}(\bphi_n)}_{ \L{4} {\LL^{2}}\cap\L{2} {\LL^{6}} }
    + \norm{\nabla \Ns(\bphi_n,\bsig_n)}_{ \L{2} {\LL^{2}}} 
    \notag\\
    & \quad
    +\norm{\Div(\bphi_n\otimes\v_n)}_{ \L{4/3}{\LL^{3/2}}}
    +\norm{\Div(\bsig_n\otimes\v_n)}_{ \L{1}{\LL^1}}
   \le C.
\end{align}
We still have to derive additional uniform estimates for the time derivatives of $\bphi_n$ and $\bsig_n$. 
To this end, we recall the identities
\begin{alignat}{2}
\label{DEC:DIV:PHI:N}
    \Div(\bphi_n\otimes\v_n) 
    &= ( \nabla \bphi_n ) \v_n+ \bphi_n \Sv (\bphi_n,\bsig_n)
    &&\quad\text{a.e. in $Q$}, \\
\label{DEC:DIV:SIG:N}
    \Div(\bsig_n\otimes\v_n) 
    &= ( \nabla \bsig_n ) \v_n+ \bphi_n \Sv (\bphi_n,\bsig_n)
    &&\quad\text{a.e. in $Q$}.
\end{alignat}
Let now $\bz \in \L{\frac 83} {\HH^1}$ be an arbitrary test function. Using the continuous embedding $\HH^{\frac 32}\emb \LL^3$ as well as Lemma~\ref{LEM:INT}, we obtain the estimate
\begin{align*}
    &\int_0^T\intO \Div(\bphi_n\otimes\v_n) \cdot \bz \dxt
    \le C \int_0^T 
        \big( \norm{ \Grad\bphi_n}_{\LL^3}
        \norm{\v_n}_{\LL^2}  
        + \norm{\bphi_n}_{\LL^2} \big)
        \norm{\bz}_{ {\HH^1}} \dt 
    \\&\quad
    \le C \int_0^T 
        \big( \norm{ \bphi_n}_{\HH^1 }^{3/4}
        \norm{ \bphi_n}_{\HH^3 }^{1/4} 
        \norm{\v_n}_{\LL^2}  
        + \norm{\bphi_n}_{\LL^2} \big)
        \norm{\bz}_{ {\HH^1}} \dt 
     \\&\quad
     \le C\Big( \norm{\bphi_n}_{\L\infty{\HH^1}}^{3/4}
        \norm{ \bphi_n}_{\L2{\HH^3}}^{1/4} 
        \norm{\v_n}_{\L{2}{\LL^2}}  
        + \norm{\bphi_n}_{\L{\infty}{\LL^2}} \Big)
        \norm{\bz}_{ \L{8/3}{\HH^1}}
     \\&\quad
     \le C \norm{\bz}_{ \L{8/3}{\HH^1}}.
\end{align*}
Taking the supremum over all $\bz \in \L{8/3} {\HH^1}$ with $\norm{\bz}_{\L{8/3} {\HH^1}} \le 1$, we thus conclude the uniform estimate
\begin{align}
    \label{EST:UNI:D2}
    \norm{\Div(\bphi_n\otimes\v_n)}_{\L{8/5}{(\HH^1)'}} \le C.
\end{align}
Now, by a comparison argument, we infer from \eqref{WF:2} (written for the functions with index $n$)  that
\begin{align}
    \label{EST:UNI:D3}
    \norm{\delt \bphi_n}_{\L{8/5} {\Vp}} 
    \leq C.
\end{align}

Since $\LL^1$ is continuously embedded in $(\WW^{1,4})'$, we infer from \eqref{EST:UNI:D1} that 
\begin{align}
    \label{EST:UNI:D3*} 
    \norm{\Div(\bsig_n\otimes\v_n)}_{\L{1}{(\WW^{1,4})'}} \le C.
\end{align}
By means of a comparison argument, we eventually conclude from \eqref{WF:4} (written for the functions with index $n$) that
\begin{align}
    \label{DAR:EST:7} 
    \norm{\delt \bsig_n}_{\L{1} {({\WW^{1,4}})'}} \leq C.
\end{align}
Combining \eqref{EST:UNI:D1} with \eqref{EST:UNI:D2}--\eqref{DAR:EST:7}, we eventually obtain the uniform estimate 
\begin{align}
    \label{DAR:FINALEST}
    &\norm{\bphi_n}_{\W{1,8/5}{(\HH^1)'}\cap \L\infty{\HH^1} \cap \L2{\HH^3}}
    +\norm{\bsig_n}_{\W{1,1}{(\WW^{1,4})'} \cap \L\infty {\LL^2} \cap \L2{\HH^1}}
    \notag\\
    &\quad
    +\norm{\bsig_n}_{\L4{\LL^2_\Gamma}}
    +\norm{\bmu_n}_{\L2{\HH^1}}
    +\norm{\v_n}_{\L{2}{\LL^2}}
    +\bignorm{\sqrt{\eta_n(\bphi_n)}\, \D \v_n}_{\L2{\LL^2}}
    \notag\\
    &\quad
    + \norm{p_n}_{\L{4/3}{L^2}}
    + \norm{\Psi_{\bphi}(\bphi_n)}_{ \L{4} {\LL^{2}}\cap\L{2} {\LL^{6}} }
    + \norm{\nabla \Ns(\bphi_n,\bsig_n)}_{ \L{2} {\LL^{2}}} 
    \notag\\
    & \quad
    +\norm{\Div(\bphi_n\otimes\v_n)}_{\L{8/5}{(\HH^1)'}\cap  \L{4/3}{\LL^{3/2}}} 
    +\norm{\Div(\bsig_n\otimes\v_n)}_{ \L{1}{\LL^1} }
    \le C.
\end{align}

\noindent
\textbf{Step 2: Passing to the limit.}
The next step is to pass to the limit as $n\to \infty$.
From the uniform estimate \eqref{DAR:FINALEST}, we infer the existence of a quintuplet $(\bphi,\bsig,\bmu,\v,p)$ as well as limits $\ttau$ and $\ttheta$ such that for all $s\in[0,1)$,%
\begin{subequations}
\label{CD:WS}
\begin{alignat}{2} 
    \label{CD:PHI}
    \bphi_n & \to \bphi 
    &&\quad\text{weakly-$^*$ in $ L^\infty(0,T;\HH^1)$,} \notag\\
    &&&\qquad \text{weakly in $ \W{1,\frac 85} {\Vp} 
        \cap L^2(0,T;\HH^3)$, a.e.~in $Q$,}
    \notag\\ 
    &&&\qquad\text{and strongly in $C([0,T];\HH^s) \cap L^2 (0,T;\HH^{2+s})$,}
    \\
    \label{CD:PHI:G}
    \bphi_n\vert_\Gamma &\to \bphi\vert_\Gamma
    &&\quad\text{strongly in $C([0,T];\LL^2_\Gamma)$,
        and a.e.~on $\Sigma$,}
    \\ 
    \label{CD:SIGMA}
    \bsig_n & \to \bsig
    &&\quad\text{weakly-$^*$ in $\L\infty {\LL^2} $, }\notag\\
    &&&\qquad\text{weakly in $ \W{1,1} {({\WW^{1,4}})'} \cap \L2 {\HH^1}$, a.e.~in $Q$,}  
    \notag\\ 
    &&&\qquad\text{and strongly in $C([0,T];({\WW^{1,4}})') \cap L^2 (0,T;\HH^s) $,}
    \\
    \label{CD:SIGMA:G}
    \bsig_n\vert_\Gamma &\to \bsig\vert_\Gamma
    &&\quad\text{weakly in $L^4(0,T;\LL^2_\Gamma)$, 
        strongly in $\L2 {\LL^{2}_\Gamma}$,
        and a.e.~on $\Sigma$,}
    \\ 
    \label{CD:MU}
    \bmu_n & \to \bmu && \quad \text{weakly in $\L2 {\HH^1}$,}
    \\
    \label{CD:V}
    \v_n & \to \v && \quad \text{weakly in $ \L2 {\LL^2_{\Div}}$,} 
    \\
    \label{CD:P}
    p_n & \to p && \quad \text{weakly in $\L{\frac 43} {L^2}$,} 
    \\ 
    \label{CD:DIV:PHI}
    \Div(\bphi_n\otimes\v_n) & \to \ttau 
    && \quad\text{weakly in $\L{\frac 85} { (\HH^1)'} \cap \L{\frac 43} {\LL^{\frac 32}}$},
    \\ 
    \label{CD:DIV:SIG}
    \Div(\bsig_n\otimes\v_n) & \to  \ttheta 
    && \quad \text{weakly in $\L{1} {\LL^1}$},
    \\ 
    \eta_n(\bphi_n) & \to  0 
    && \quad \text{strongly in $L^\infty(Q)$, and a.e.~in $Q$},   
    \\ 
    \lambda_n(\bphi_n) & \to  0 
    &&\quad \text{strongly in $L^\infty(Q)$, and a.e.~in $Q$}, 
\end{alignat}
\end{subequations}
as $n\to\infty$, along a non-relabeled subsequence.
The strong convergences in \eqref{CD:PHI} and \eqref{CD:SIGMA} are a direct consequence of the Aubin--Lions--Simon lemma (see \cite[Theorem~II.5.16]{boyer}), and
the strong convergences in \eqref{CD:PHI:G} and \eqref{CD:SIGMA:G} are obtained through the trace theorem.
We further point out that the convergence $\v_n\to \v$ in $\L2{\LL^2_{\Div}}$ (see \eqref{CD:V}) already entails that
\begin{align}
    \label{CD:DIV:V}
    \Div(\v_n) \to \Div(\v) \quad\text{weakly in $\L2{L^2}$}
\end{align}
as $n\to\infty$. 
Recalling the assumptions \ref{ass:tensors}--\ref{ass:pot}, we use the above convergences to infer that
\begin{subequations}
\label{CD:AE}
\begin{alignat}{3}
    &\CC(\bphi_n,\bsig_n) \to \CC(\bphi,\bsig),
    &&\quad \DD(\bphi_n,\bsig_n) \to \DD(\bphi,\bsig)
    &&\quad\text{a.e.~in $Q$,}\\
    \label{CD:AE:3}
    &\Np(\bphi_n,\bsig_n) \to \Np(\bphi,\bsig),
    &&\quad \Ns(\bphi_n,\bsig_n) \to \Ns(\bphi,\bsig)
    &&\quad\text{a.e.~in $Q$,}\\
    \label{CD:AE:4}
    &\Grad\Ns(\bphi_n,\bsig_n) \to \Grad\Ns(\bphi,\bsig),
    &&\quad \Sv(\bphi_n,\bsig_n) \to \Sv(\bphi,\bsig)
    &&\quad\text{a.e.~in $Q$,}\\
    \label{CD:AE:5}
    &\Psi_{\bphi}(\bphi_n) \to \Psi_{\bphi}(\bphi)
    &&
    &&\quad\text{a.e.~in $Q$,}\\
    &\SG(\bphi_n,\bsig_n) \to \SG(\bphi,\bsig)
    &&&&\quad\text{a.e.~on $\Sigma$,}\\
    &\Lam_{\bphi}(\bphi_n,\bsig_n) \to \Lam_{\bphi}(\bphi,\bsig),
    &&\quad \Th_{\bphi}(\bphi_n,\bsig_n) \to \Th_{\bphi}(\bphi,\bsig)
    &&\quad\text{a.e.~in $Q$,}\\
    &\Lam_{\bsig}(\bphi_n,\bsig_n) \to \Lam_{\bsig}(\bphi,\bsig),
    &&\quad \Th_{\bsig}(\bphi_n,\bsig_n) \to \Th_{\bsig}(\bphi,\bsig)
    &&\quad\text{a.e.~in $Q$,}
\end{alignat}
\end{subequations}
as $n\to\infty$, after extracting a subsequence. Now, using \eqref{CD:AE:4}, \eqref{CD:AE:5}, the uniform estimate \eqref{DAR:FINALEST}, and the uniqueness of the limit, we further conclude that
\begin{alignat}{2}
\label{CD:PSI}
    &\Psi_{\bphi}(\bphi_n) \to \Psi_{\bphi}(\bphi)
    &&\quad\text{weakly in $L^4(0,T;\LL^2) \cap L^2(0,T;\LL^6)$,}\\
\label{CD:NS}
    &\Grad\Ns(\bphi_n,\bsig_n) \to \Grad\Ns(\bphi,\bsig)
    &&\quad\text{weakly in $L^2(0,T;\LL^2)$}
\end{alignat}
as $n\to\infty$, after another subsequence extraction.

Let now $\delta \in C^\infty_c(0,T)$, $\be \in H^1(\Omega;\R^d)$, $\bz,\bt \in H^1(\Omega;\R^L)$, $\bx \in W^{1,4}(\Omega;\R^M) \emb L^\infty(\Omega;\R^M)$, and $q \in H^1(\Omega)$ be arbitrary test functions.
We now test the weak formulation \eqref{WF} (written for $(\bphi_n, \bmu_n, \bsig_n, \v_n, p_n) $ and the viscosities $\eta_n$ and $\lambda_n$) with $\delta\be$, $\delta\bz$, $\delta\bt$ and $\delta\bx$, and we integrate the resulting equations with respect to time from $0$ to $T$. We further multiply the identity \eqref{SF:DIV} (written for $\v_n$, $\bphi_n$ and $\bsig_n$) by $\delta q$ and integrate the resulting equation over $Q$.
In summary, we obtain
\begin{subequations}
\label{WF:LIM}
\begin{align}
    \notag
	0= & \intQ  \big(2 \eta_n(\bphi_n) \D\v_n + \lambda_n(\bphi_n) \Div(\v_n) \I - p_n\I \big) : \delta \nabla \be + \nu  \v_n \cdot \delta\be \dxt
	\\ & \quad     \label{WF:LIM:1}
	-  \intQ  (\nabla \bphi_n)^\top  \bmu_n \cdot \delta\be + (\nabla \bsig_n)^\top \Ns(\bphi_n,\bsig_n) \cdot \delta\be  \dxt,
	\\
	\notag
	0= & \int_0^T \ang{\delt \bphi_n}{ \bz }_{\HH^1} \, \delta  \dt
	+ \intQ  \Div(\bphi_n\otimes\v_n) \cdot \delta\bz \dxt 
	\\ & \qquad \label{WF:LIM:2}
	 + \intQ  \CC(\bphi_n,\bsig_n) \nabla \bmu_n : \delta \nabla \bz - \S_{\bphi}(\bphi_n,\bsig_n,\bmu_n) \cdot \delta\bz 
	\dxt,
	\\ \notag
	0= & \intQ   -\bmu_n \cdot  \delta\bt \dx  
	+ \gamma\eps \nabla \bphi_n : \delta \nabla \bt + \gamma\eps^{-1} \Psi_{\bphi}(\bphi_n) \cdot  \delta\bt   \dxt
	\\ & \quad  \label{WF:LIM:3}
	+ \intQ   \N_{\bphi}(\bphi_n,\bsig_n) \cdot  \delta\bt \dxt,
	\\ \notag
	0= & \int_0^T \ang{ \delt \bsig_n}{\bx}_{\WW^{1,4}} \; \delta \dt
	+ \intQ \Div(\bsig_n\otimes\v_n) \cdot \delta\bx \dxt
	\\ & \quad \notag
    + \intQ \DD(\bphi_n,\bsig_n) \nabla \Ns(\bphi_n,\bsig_n): \delta \nabla \bx \dxt
	\\ & \quad \label{WF:LIM:4}	
	+ \intQ  \Ss(\bphi_n,\bsig_n,\bmu_n) \cdot \delta\bx \dxt
	- \intS  \SG(\bphi_n,\bsig_n)\cdot \delta\bx \dS \dt,
	\\ \label{WF:LIM:5}	
	0= & \intQ   \Div (\v_n)\, \delta q -  S_\v(\bphi_n,\bsig_n)\, \delta q  \dxt.
\end{align}
\end{subequations}
Our next goal is then to pass to the limit $n\to\infty$ in this variational formulation.
Invoking the convergence properties \eqref{CD:WS} and \eqref{CD:AE}, and using Lebesgue's dominated convergence theorem, 
\ref{ass:sources}--\ref{ass:source:boundary}, we deduce that%
\begin{subequations}
\label{CD:LCT}
\begin{alignat}{3}
    &\CC(\bphi_n,\bsig_n) \delta\Grad\bz 
    \to \CC(\bphi,\bsig) \delta\Grad\bz,
    &&\quad\DD(\bphi_n,\bsig_n) \delta\Grad\bx 
    \to \DD(\bphi,\bsig) \delta\Grad\bx 
    &&\quad\text{in $\LL^2(Q)$,}\\
    &\Th_{\bphi}(\bphi_n,\bsig_n)^\top \delta\bz 
    \to \Th_{\bphi}(\bphi,\bsig)^\top \delta\bz,
    &&\quad\Th_{\bsig}(\bphi_n,\bsig_n)^\top \delta\bx 
    \to \Th_{\bsig}(\bphi,\bsig)^\top\delta\bx 
    &&\quad\text{in $L^2(Q)$,}
\end{alignat}
\end{subequations}
as $n\to\infty$. Furthermore, proceeding as in Step~4 of the proof of Theorem~\ref{THM:EXISTENCE:WEAK}, we use Lebesgue's generalized convergence theorem \cite[Sec.~3.25]{Alt} to conclude that
\begin{subequations}
\label{CD:LGCT}
\begin{alignat}{2}
    [\Ns(\bphi_n,\bsig_n)]_j[\delta\be]_i
    &\to [\Ns(\bphi,\bsig)]_j[\delta\be]_i
    &&\quad\text{in $L^2(Q)$},\\
    \Np(\bphi_n,\bsig_n)\cdot\delta\bt
    &\to \Np(\bphi,\bsig)\cdot\delta\bt
    &&\quad\text{in $L^2(Q)$,}\\
    \Lam_{\bphi}(\bphi_n,\bsig_n) \cdot\delta\bz 
    &\to \Lam_{\bphi}(\bphi,\bsig) \cdot\delta\bz
    &&\quad\text{in $L^2(Q)$,}\\
    \Lam_{\bsig}(\bphi_n,\bsig_n) \cdot\delta\bx 
    &\to \Lam_{\bsig}(\bphi,\bsig) \cdot\delta\bx 
    &&\quad\text{in $L^2(Q)$,}\\
    \Lam_{\Gamma}(\bphi_n,\bsig_n) \cdot\delta\bx 
    &\to \Lam_{\Gamma}(\bphi,\bsig) \cdot\delta\bx
    &&\quad\text{in $L^2(\Sigma)$,}
\end{alignat}
\end{subequations}
as $n\to\infty$, for all $i\in\{1,...,d\}$ and $j\in\{1,...,M\}$.
For most of the terms in \eqref{WF:LIM}, we can simply use the convergences \eqref{CD:WS}, \eqref{CD:PSI}, \eqref{CD:NS}, \eqref{CD:LCT} and \eqref{CD:LGCT} to pass to the limit $n\to\infty$. However, some of the terms require a closer investigation.

In the terms depending on the viscosity functions $\eta_n$ and $\lambda_n$, we can pass to the limit $n\to\infty$ as follows:
\begin{align}
    \label{CD:ETA}
     & \intQ 2 \eta_n(\bphi_n) \D\v_n : \delta \nabla \be \dxt
     \notag \\ & \quad 
     \leq C \, \bignorm{ \sqrt{\eta_n (\bphi_n)}\D \v_n}_{\L2 {\LL^2}}
     \norm{ \eta_n (\bphi_n)}_{L^\infty(Q)}^{\frac 12}
     \norm{\delta}_{L^\infty([0,T])} \norm{\be}_{\HH^1} 
     \notag \\ & \quad 
     \leq C \, 
     \norm{ \eta_n (\bphi_n)}_{L^\infty(Q)}^{\frac 12}
     \norm{\delta}_{L^\infty([0,T])} \norm{\be}_{\HH^1} 
     \to 0, \\[1ex]
     \label{CD:LAMBDA}
     & \intQ   \delta \lambda_n(\bphi_n) \Div(\v_n)\I: \nabla \be  \dxt
     \notag \\ & \quad 
     \leq C  
     \norm{ {\lambda_n (\bphi_n)}}_{L^\infty(Q)}
     \norm{ \Sv(\bphi_n,\bsig_n)}_{\L2 {L^2}}
     \norm{\delta}_{L^\infty([0,T])} \norm{\be}_{\HH^1} 
     \to 0.
\end{align}

Furthermore, we still need to recover the identities $\ttau = \Div(\bphi\otimes\v)$ and $\ttheta = \Div(\bsig\otimes\v)$ almost everywhere in $Q$.  
To prove the latter identity, we first deduce from \eqref{CD:DIV:SIG} that
\begin{align}
\label{CD:DIV:SIGV}
    \intQ \Div(\bsig_n\otimes\v_n) \cdot \delta \bx_* \dxt 
    \to
    \intQ \ttheta \cdot \delta \bx_* \dxt
    \quad\text{for all $\bx_* \in \WW^{1,4}$.}
\end{align}
Let us now consider an arbitrary test function $\bx_0\in C^\infty_c(\Omega; \R^M)$.
Performing an integration by parts, we obtain
\begin{align*}
   \intQ \Div(\bsig_n\otimes\v_n) \cdot \delta \bx_0  \dxt 
   = -  \intQ (\bsig_n\otimes\v_n) : \delta \nabla \bx_0 \dxt.
\end{align*}
Due to \eqref{CD:SIGMA} and \eqref{CD:V}, we may pass to the limit on the right-hand side by the weak-strong convergence principle. After another integration by parts, we get
\begin{align*}
   \intQ \Div(\bsig_n\otimes\v_n) \cdot \delta \bx_0  \dxt 
   &\to  -  \intQ (\bsig \otimes\v ) : \delta \nabla \bx_0 \dxt \\
   & = \intQ \Div(\bsig\otimes\v) \cdot \delta \bx_0  \dxt
\end{align*}
as $n\to\infty$, for all $\bx_0\in C^\infty_c(\Omega; \R^M)$. Since \eqref{CD:DIV:SIGV} holds true for all $\bx_* = \bx_0\in C^\infty_c(\Omega; \R^M)$, we eventually have
\begin{align*}
    \intQ \ttheta \cdot \delta\bx_0  \dxt
    = \intQ \Div(\bsig\otimes\v) \cdot \delta \bx_0  \dxt
\end{align*}
for all $\bx_0\in C^\infty_c(\Omega; \R^M),\, \delta\in C^\infty([0,T])$, which is enough to conclude that $\ttheta = \Div(\bsig\otimes\v)$ almost everywhere in $Q$. In particular, this proves that
\begin{align}
\label{CD:DIV:SIGV*}
    \intQ \Div(\bsig_n\otimes\v_n) \cdot \delta \bx \dxt 
    \to
    \intQ \Div(\bsig\otimes\v) \cdot \delta \bx \dxt
\end{align}
as $n\to\infty$.
Proceeding similarly with the convection term associated with the phase-field variable, we conclude that $\ttau = \Div(\bphi\otimes\v)$ almost everywhere in $Q$, and
\begin{align}
\label{CD:DIV:PHIV*}
    \intQ \Div(\bphi_n\otimes\v_n) \cdot \delta \bz \dxt 
    \to
    \intQ \Div(\bphi\otimes\v) \cdot \delta \bz \dxt
\end{align}
as $n\to \infty$.
We can now use the convergences \eqref{CD:WS}--\eqref{CD:NS}, \eqref{CD:LCT}--\eqref{CD:LAMBDA}, \eqref{CD:DIV:SIGV*}, and \eqref{CD:DIV:PHIV*},
along with the identities $\ttau = \Div(\bphi\otimes\v)$ and $\ttheta = \Div(\bsig\otimes\v)$ a.e.~in $Q$, to pass to the limit $n\to\infty$ in the variational formulation \eqref{WF:LIM}. Since $\delta\in C^\infty([0,T])$ was arbitrary, this proves that the quintuplet $(\bphi,\bmu,\bsig,\v,p)$ satisfies the equations 
\begin{subequations}
\begin{align}
    \label{DAR:WF:LIM:1}
	0= & \intO  - p\, \Div (\be) + \big(\nu  \v - (\nabla \bphi)^\top \bmu 
	- (\nabla \bsig)^\top \Ns(\bphi,\bsig) \big) \cdot \be\dx,
	\\
    \notag
	0= & \< \delt \bphi , \bz >_{\HH^1} 
	+ \intO \big(   (\nabla \bphi) \v \cdot \bz + \bphi \Sv(\bphi,\bsig)\cdot  \bz \big)\dx 
	\\ & \qquad \label{DAR:WF:LIM:2}
	 +\intO\CC(\bphi,\bsig) \nabla \bmu : \nabla \bz - \Sp(\bphi,\bsig,\bmu) \cdot \bz \dx,
	\\ \label{DAR:WF:LIM:3}
	0= & \intO  -\bmu \cdot \bt \dx  + \gamma\eps \nabla \bphi : \nabla \bt + \gamma\eps^{-1} \Psi_{\bphi}(\bphi) \cdot \bt + \Np(\bphi,\bsig) \cdot \bt\dx,
	\\     \notag
	0= & \< \delt \bsig , \bx >_{\WW^{1,4}}
	+ \intO   (\nabla \bsig) \v \cdot \bx - \bsig \Sv (\bphi,\bsig)\cdot \bx 
	+ \DD(\bphi,\bsig) \nabla \Ns(\bphi,\bsig): \Grad \bx \dx
	\\ & \quad \label{DAR:WF:LIM:4}
	+ \intO \Ss(\bphi,\bsig,\bmu) \cdot \bx \dx -\intG \SG(\bphi,\bsig)\cdot \bx \dS
\end{align}
almost everywhere in $(0,T)$, for all test functions $\be \in H^1(\Omega;\R^d)$, $\bz,\bt \in H^1(\Omega;\R^L)$, $\bx \in {W^{1,4}}(\Omega;\R^M)$, 
as well as the identity
\begin{align}
    \label{DAR:WF:LIM:5}
    \Div (\v) =  \Sv(\bphi,\bsig)  \quad \text{a.e. in $Q$.}
\end{align}
\end{subequations}
Testing \eqref{DAR:WF:LIM:1} with any function $\be_0\in C^\infty_c(\Omega;\R^d)$, we deduce that
\begin{align*}
    \intO p\, \Div (\be_0) \dx
    = \intO \big(\nu  \v - (\nabla \bphi)^\top \bmu 
	- (\nabla \bsig)^\top \Ns(\bphi,\bsig) \big) 
        \cdot \be_0 \dx.
\end{align*}
Since
\begin{align*}
    &\bignorm{\nu  \v - (\nabla \bphi)^\top \bmu 
	- (\nabla \bsig)^\top \Ns(\bphi,\bsig)}_{\L1{\LL^{3/2}}}
	\notag\\
	&\quad \le C\norm{\v}_{\L2{\LL^{2}}}
	+ \norm{\Grad\bphi}_{\L2{\LL^2}} \norm{\bmu}_{\L2{\LL^6}}
	\notag\\
	&\qquad + C \norm{\Grad\bsig}_{\L2{\LL^2}} 
	\big(\norm{\bphi}_{\L2{\LL^6}} + \norm{\bsig}_{\L2{\LL^6}} +1 \big)
	\notag\\
	&\quad\le C,
\end{align*}
we conclude that $\Grad p$ exists in the weak sense with
\begin{align*}
    \Grad p = \nu  \v - (\nabla \bphi)^\top \bmu 
	- (\nabla \bsig)^\top \Ns(\bphi,\bsig) 
	\in \L {1} {\LL^{\frac 32}}
    \quad\text{a.e.~in $Q$.}
\end{align*}
Plugging this identity into \eqref{DAR:WF:LIM:1} and integrating the resulting expression by parts, we infer that
\begin{align}
    \label{ID:IBP}
    0 
    =  - \intO p(t)\, \Div (\be) + \Grad p(t) \cdot \be\dx 
    =  - \intG p(t) \be\cdot\n \dS
\end{align}
for almost all $t\in(0,T)$ and all $\be\in \HH^1$. 
For any $q\in C^1_b(\Gamma)$, we have $-q\n \in \HH^1$ and we may thus choose $\be = -q\n$. We thus obtain
\begin{align}
    0 
    =  \intG p(t)\, q \, \n\cdot\n \dS
    =  \intG p(t)\, q \dS
\end{align}
for all $q\in C^1_b(\Gamma)$ and almost all $t\in(0,T)$,
which directly proves that
 $p\vert_\Sigma=0$ a.e.~on $\Sigma$. In summary, we have
\begin{align}
    p \in \L{\frac 43}{L^2} \cap L^1\big(0,T;{W^{1,\frac 32}_0}\big),
\end{align}
and thus, all regularities in \eqref{REG:MCHD} are established. 
In particular, via integration by parts, \eqref{DAR:WF:LIM:1} can be replaced by the equivalent formulation
\begin{align}
    \label{DAR:WF:LIM:1*}
	0= & \intO  \Grad p \cdot \be + \big(\nu  \v - (\nabla \bphi)^\top  \bmu - (\nabla \bsig)^\top \Ns(\bphi,\bsig) \big) \cdot \be\dx.
\end{align}
We thus conclude that the quintuplet $(\bphi,\bmu,\bsig,\v,p)$ satisfies the weak formulation \eqref{WF:D}. 

As a further consequence of the convergences $\bphi_n\to\bphi$ in $C([0,T];\LL^2)$ from \eqref{CD:PHI} and $\bsig_n\to\bsig$ in $C([0,T];(\WW^{1,4})')$ from \eqref{CD:SIGMA} we have 
\begin{alignat*}{2}
    \bphi_0 
    &= \bphi_n\vert_{t=0} \to \bphi\vert_{t=0} 
    &&\quad \text{in $L^2(\Omega; \R^L),$}
    \\ 
	\< \bsig_0, \Phi>_{\HH^1} 
	&= \< \bsig_n\vert_{t=0}, \Phi>_{\WW^{1,4}} \to \< \bsig\vert_{t=0}, \Phi>_{\WW^{1,4}} 
	&&\quad \text{for all $\Phi \in{W^{1,4}}(\Omega;\R^M)$},
\end{alignat*}
as $n\to\infty$, meaning that $\bphi$ and $\bsig$ satisfy the initial conditions \eqref{INI:D:1} and \eqref{INI:D:2}.

This proves that the limit $(\bphi,\bmu,\bsig,\v,p)$ is a weak solution of the multiphase Cahn--Hilliard--Darcy system in the sense of Definition~\ref{DEF:WEAKSOL:DARCY}.
We further point out that the additional regularity property \eqref{REG:MCHD:ADD} can be verified by arguing exactly as in the proof of Theorem~\ref{THM:EXISTENCE:WEAK}.
Thus, the proof of Theorem~\ref{THM:EXISTENCE:WEAK:DARCY} is complete.
\end{proof}

\section*{Acknowledgment}
The authors want to thank Harald Garcke for helpful discussions. 
Patrik Knopf was partially supported by the RTG 2339 ``Interfaces, Complex Structures, and Singular Limits'' of the German Science Foundation (DFG).
Their support is gratefully acknowledged.
In addition, Andrea Signori wants to acknowledge the affiliation
to the GNAMPA (Gruppo Nazionale per l'Analisi Matematica, 
la Probabilit\`a e le loro Applicazioni) of INdAM (Isti\-tuto 
Nazionale di Alta Matematica).




\footnotesize

\bibliographystyle{plain}
\bibliography{KS}
%

\end{document}